\documentclass[12pt,a4paper]{article}

\title{\bf Globally linked pairs of vertices in generic frameworks}
\author{
Tibor Jord\'an\thanks{Department of Operations Research, ELTE E\"otv\"os Lor\'and University, and the ELKH-ELTE Egerv\'ary Research Group
on Combinatorial Optimization, E\"otv\"os Lor\'and Research Network (ELKH),
P\'azm\'any P\'eter s\'et\'any 1/C, 1117 Budapest, Hungary.
e-mail: {\tt tibor.jordan@ttk.elte.hu}} 
\and
Soma Vill\'anyi\thanks{Department of Operations Research, ELTE E\"otv\"os Lor\'and University,
P\'azm\'any P\'eter s\'et\'any 1/C, 1117 Budapest, Hungary.
e-mail: {\tt villanyi.soma@gmail.com}}
}
\date{July 7, 2023}

\usepackage[utf8]{inputenc}
\usepackage[english]{babel}
\usepackage{amsmath}
\usepackage{amsthm}
\usepackage{amssymb}
\usepackage{geometry}\geometry{margin=1.5in}
\usepackage{a4wide}
\usepackage{bm}
\usepackage{microtype}
\usepackage{mathtools}
\usepackage{csquotes}

\theoremstyle{definition}
\newtheorem{theorem}{Theorem}[section]
\newtheorem{lemma}[theorem]{Lemma}
\newtheorem*{lemma*}{Lemma}
\newtheorem*{conjecture*}{Conjecture}
\newtheorem*{lemma''*}{``Lemma''}

\newtheorem*{claim*}{Claim}
\newtheorem{corollary}[theorem]{Corollary}

\newtheorem{proposition}[theorem]{Proposition}

\newcommand{\R}{\mathbb{R}}

\begin{document}
\maketitle

\begin{abstract}
A $d$-dimensional framework is a pair $(G,p)$, where $G=(V,E)$
is a graph and $p$ is a map from $V$ to $\mathbb{R}^d$.
The length of an edge $xy\in E$ in $(G,p)$ is the distance between $p(x)$ and $p(y)$.
A vertex pair $\{u,v\}$ of $G$ is said to be globally linked in $(G,p)$ if the
distance between $p(u)$ and $p(v)$ is equal to the distance between $q(u)$ and $q(v)$
for every $d$-dimensional framework $(G,q)$ in which the corresponding edge lengths are the same as in $(G,p)$.
We call $(G,p)$ globally rigid in $\R^d$ when each vertex pair of $G$ is globally linked in $(G,p)$.
A pair $\{u,v\}$ of vertices of $G$ is said to be weakly globally linked in $G$ in $\R^d$ if 
there exists a generic framework $(G,p)$ in which $\{u,v\}$ is globally linked.

In this paper we first give a sufficient condition for the weak global linkedness of a vertex pair of a
$(d+1)$-connected graph $G$ in $\R^d$ and then show that for $d=2$ it is also necessary.
We use this result to obtain a complete characterization of weakly globally linked pairs
in graphs in $\R^2$, which gives rise to an algorithm for testing weak global linkedness in the
plane in $O(|V|^2)$
time. Our methods lead to
a new short proof for the characterization of globally rigid graphs in $\R^2$, and further
results on weakly globally linked pairs and globally rigid graphs in the plane and in higher
dimensions.
\end{abstract}

\section{Introduction}

A $d$-dimensional {\it framework}
is a pair $(G,p)$, where $G=(V,E)$
is a graph and $p$ is a map from $V$ to $\mathbb{R}^d$.
We also say that $(G,p)$ is a {\it realization} of $G$ in
$\mathbb{R}^d$. The {\it length} of an edge $uv\in E$ in
$(G,p)$ is $||p(u)-p(v)||$, where $||.||$ denotes the Euclidean norm in $\mathbb{R}^d$.
Two frameworks
$(G,p)$ and $(G,q)$ are {\it equivalent} if corresponding edge lengths are the same, that is, 
$||p(u)-p(v)||=||q(u)-q(v)||$ holds
for all pairs $u,v$ with $uv\in E$.
The frameworks $(G,p)$ and $(G,q)$ are {\it congruent} if
$||p(u)-p(v)||=||q(u)-q(v)||$ holds
for all pairs $u,v$
with $u,v\in V$.

A $d$-dimensional framework $(G,p)$ is called {\it globally rigid} if every equivalent 
$d$-dimensional framework
$(G,q)$ is congruent to $(G,p)$. This is the same as saying that the edge lengths of $(G,p)$
uniquely determine all the pairwise distances.
It is NP-hard to test whether a given framework in $\R^d$ is globally
rigid, even for $d=1$ \cite{Saxe}.
This fundamental property of frameworks becomes more tractable if we consider generic
frameworks. A framework $(G,p)$ (and the set $\{p(v):v\in V(G)\}$) is said to be {\it generic} if the set of
its $d|V(G)|$ vertex coordinates is algebraically independent over $\mathbb{Q}$.
It is known that in a given dimension 
the global rigidity of a generic framework $(G,p)$ depends only on $G$: either every generic realization of $G$ in $\R^d$ is globally rigid, or none of them are
\cite{Con, GHT}.
Thus, we say that a graph $G$ is {\it globally rigid} in $\R^d$ if every (or equivalently, if some) $d$-dimensional
generic realization of $G$ is globally rigid in $\R^d$.
For $d=1,2$, combinatorial characterizations and corresponding
deterministic polynomial time algorithms are known for (testing) 
global rigidity in $\R^d$.
The case $d=1$ is a folklore result:
it is not hard to see that a graph $G$ on at least three vertices is globally rigid in $\R^1$ if and only
if it is $2$-connected. The necessary and sufficient conditions for $d=2$ are stated as Theorem \ref{thm}
in the next section.
The existence of such a characterization
(or algorithm) for $d\geq 3$ is a major open question.
For more details on globally rigid graphs and frameworks see e.g.\ \cite{JW}.

In this paper we consider a refined, local version, in which we are interested in whether the edge lengths
of a framework uniquely determine the distance between a given pair of vertices, rather than all pairs of vertices.
We shall need the following notions. Following \cite{JJS}, 
we say that
a pair of vertices $\{u,v\}$ in a $d$-dimensional framework $(G,p)$ is {\it globally linked in $(G,p)$} if for every equivalent $d$-dimensional framework $(G,q)$ we have
$||p(u)-p(v)||=||q(u)-q(v)||$. Global linkedness in $\R^d$ is not a generic property (for $d\geq 2$): 
a vertex pair may be globally linked in some generic $d$-dimensional realization of $G$ without being globally linked in all generic realizations. 
See Figure \ref{fig:gen1}. 
We say that
a pair $\{u,v\}$ is {\it globally linked in $G$} in $\R^d$ if it is globally linked in all generic $d$-dimensional frameworks $(G,p)$. 
We call a pair $\{u,v\}$ {\it weakly globally linked in $G$} in $\R^d$ if there exists a generic $d$-dimensional framework $(G,p)$
in which $\{u,v\}$ is globally linked. If  $\{u,v\}$ is not weakly globally linked in $G$, then it is called {\it globally loose} in $G$.
It is immediate from the definitions that $G$ is globally rigid in $\R^d$ if and only if each vertex pair is globally linked in $G$ in $\R^d$. As we shall see, the global rigidity of $G$ 
 already follows from the (seemingly weaker) condition that each vertex pair is weakly globally linked in $G$ (see Lemma \ref{J}(c)). 

The case $d=1$ is exceptional and well-understood. Global linkedness in $\R^1$ is a generic property: a pair $\{u,v\}$  is globally linked
in $G$ in $\R^1$ if and only if there is a cycle in $G$ that contains both $u$ and $v$.
Otherwise $\{u,v\}$ is globally loose.

For $d\geq 2$ no combinatorial (or efficiently testable) characterization has previously been found for globally linked or weakly globally linked pairs in graphs in $\R^d$.
These problems belong to the few major problems in combinatorial rigidity which have remained unsolved for $d=2$.
The main result of this paper is a solution for the weakly globally linked pairs problem in two dimensions.
We shall first give a sufficient condition for the weak global linkedness of a vertex pair of a
$(d+1)$-connected graph $G$ in $\R^d$ (Theorem \ref{globlaza jellemzesd})
and then show that in a sense the condition is also necessary in the case of
$3$-connected graphs in $\R^2$ (Theorem \ref{globlaza jellemzes2}).
The general case of the two-dimensional problem
is reduced to the $3$-connected case by a sequence of lemmas that describe how global linkedness is affected by cutting a graph along a separating pair. These results lead to
the main result (Theorem \ref{cleavunit}), which gives a characterization of weakly globally linked
pairs of vertices in $\R^2$ and gives rise to an $O(|V|^2)$ algorithm for the corresponding decision
problem.

Our methods and results lead to
a new short proof for the sufficiency part of Theorem \ref{thm}.
We also obtain a number of other structural results on weakly globally linked pairs and globally rigid graphs in $\R^2$
and in higher dimensions.

Even though
most of the known results (and conjectures) on global linkedness are concerned with
globally linked pairs of graphs in $\R^2$, their characterization remains open.
Globally linked pairs in two dimensions
have been characterized in minimally rigid graphs \cite{JJS2},
braced maximal outerplanar graphs \cite{GJcccg}, 
 and in
${\cal R}_2$-connected graphs \cite{JJS}.
In the latter two cases global linkedness turns out to be a generic property.
Hence these two results
give rise to the characterization of weakly globally linked pairs, too, in the corresponding families
of graphs.
A conjectured characterization of globally linked pairs in $\R^2$ can be found in \cite{JJS}.
A few partial results in higher dimensions are also available, see \cite{GJpartial,JT}.

The rest of the paper is organized as follows.
In Section \ref{sec:pre}
we introduce the necessary notions concerning rigid graphs and frameworks.
In Section \ref{sec:pro} we prove some simple but fundamental lemmas on weakly
globally linked pairs in $\R^d$.
Section \ref{sec:suff} contains most of the $d$-dimensional results (two key geometric lemmas
and
a sufficient condition for weak global linkedness), and the new proof for Theorem \ref{thm}. 
In Section \ref{sec:2d} we state and prove our main result, a complete characterization of the
weakly globally linked pairs in $\R^2$. In Section \ref{section:concluding} we
discuss the algorithmic aspects and collect a few
concluding remarks and
questions.

\begin{figure}[t]
\begin{center}
\includegraphics[scale=1.1]{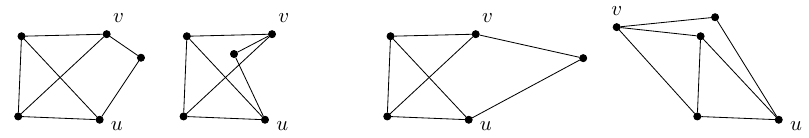}
\vspace{-0.5cm}
\end{center}
\caption{Two pairs of equivalent generic frameworks of a graph $G$ in $\R^2$. The vertex pair $\{u,v\}$ is globally linked in the two frameworks on the left. On the other hand, $\{u,v\}$ is not globally linked in the two frameworks on the right. Thus $\{u,v\}$ is not globally linked but weakly globally linked in $G$ in $\R^2$.}
\label{fig:gen1}
\end{figure}

\section{Preliminaries}
\label{sec:pre}

In this section we introduce the notions and results from the
theory of (globally) rigid frameworks and graphs that we shall use.

\subsection{Rigid graphs and the rigidity matroid}

In the structural results on global rigidity and global linkedness the notions of
rigid frameworks, rigid graphs and the rigidity matroid play a key role.

The $d$-dimensional framework $(G,p)$ is {\it rigid} if there exists some $\varepsilon
>0$ such that, if $(G,q)$ is equivalent to $(G,p)$ and
$||p(v)-q(v)||< \varepsilon$ for all $v\in V$, then $(G,q)$ is
congruent to $(G,p)$. 
This is equivalent to requiring that every continuous motion of the vertices of $(G,p)$ in $\R^d$ that
preserves the edge lengths takes the framework to a congruent realization of $G$.
It is known that in a given dimension 
the rigidity of a generic framework $(G,p)$ depends only on $G$: either every generic realization of $G$ in $\R^d$ is rigid, or none of them are \cite{AR}.
Thus, we say that a graph $G$ is {\it rigid} in $\R^d$ if every (or equivalently, if some) $d$-dimensional
generic realization of $G$ is rigid in $\R^d$.
For $d=1,2$, combinatorial characterizations and corresponding
deterministic polynomial time algorithms are known for (testing) rigidity in $\R^d$, see e.g. \cite{laman}.
The existence of such a characterization
(or algorithm) for $d\geq 3$ is a major open question.

The following elementary result is well-known.
For the proof of the two-dimensional case see 
\cite[Theorem 8.1]{JJS}.

\begin{proposition}\label{finiteequi}
Suppose that $(G,p)$ is a rigid generic framework. Then the number of distinct congruence classes of frameworks which are equivalent to $(G,p)$ is finite.
\end{proposition}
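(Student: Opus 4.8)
The plan is to express congruence classes of equivalent frameworks as the fibre of a polynomial map and then show this fibre is finite using a dimension count powered by infinitesimal rigidity and genericity. Let $M\colon \R^{d|V|}\to \R^{\binom{|V|}{2}}$ send a configuration $q$ to the vector of all pairwise squared distances $(\|q(u)-q(v)\|^2)_{u,v\in V}$, and let $\pi$ be the coordinate projection keeping only the entries indexed by $E$, so that the edge-measurement map is $f_G=\pi\circ M$. Since a configuration is determined up to congruence by its full list of pairwise distances, the points of the image $M(\R^{d|V|})$ are in bijection with congruence classes of configurations. Under this bijection, the congruence classes of frameworks equivalent to $(G,p)$ correspond exactly to the fibre $\{x\in M(\R^{d|V|}) : \pi(x)=f_G(p)\}$, so it suffices to prove that this fibre is finite.

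I would work with Zariski closures over $\mathbb{C}$. Let $D=\overline{M(\mathbb{C}^{d|V|})}$. For a generic configuration the stabiliser in the isometry group is finite, so $\dim D = d|V|-\binom{d+1}{2}=:N$, and the differential $dM$ has rank $N$ at $p$. Rigidity together with genericity gives infinitesimal rigidity, i.e.\ the rigidity matrix $R(p)$, which is exactly the Jacobian $df_G(p)=d\pi_{M(p)}\circ dM_p$, has rank $N$. Consequently $d\pi$ restricted to the $N$-dimensional tangent space $T_{M(p)}D$ is injective, so the restricted projection $\pi|_D\colon D\to \mathbb{C}^{|E|}$ is a local immersion at $M(p)$ and in particular dominates an $N$-dimensional image; hence $\pi|_D$ is generically finite-to-one.

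Finally I would use genericity to place $f_G(p)$ on the good locus. Because the coordinates of $p$ are algebraically independent over $\mathbb{Q}$, the point $M(p)$ is a generic point of $D$ and $f_G(p)=\pi(M(p))$ is a generic point of $\overline{\pi(D)}$; therefore the fibre of $\pi|_D$ over $f_G(p)$ is a generic fibre of a generically finite map and is finite. The real fibre we care about is contained in this finite complex fibre, which proves the proposition.

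The main obstacle, and the place where genericity does the real work, is the passage from the local statement ``$M(p)$ is isolated in its fibre'' (which is essentially infinitesimal rigidity) to the global finiteness of the whole fibre. This requires knowing that $f_G(p)$ lies off the proper subvariety of $\overline{\pi(D)}$ over which the fibres of $\pi|_D$ jump in dimension; the algebraic independence of the coordinates of $p$ is exactly what guarantees that $f_G(p)$ avoids this subvariety. A secondary point to handle carefully is the identification of the rigidity matrix with $df_G(p)$ and the computation $\operatorname{rank} dM_p = N$, both of which are standard consequences of $p$ being generic.
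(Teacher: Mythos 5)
The paper does not actually prove Proposition~\ref{finiteequi}: it records it as a well-known elementary fact and refers to \cite{JJS} (Theorem~8.1 there) for the two-dimensional case, so there is no in-paper argument to compare yours against. Judged on its own, your proof is correct and is the standard algebraic-geometric route: congruence classes correspond bijectively to points of the image of the full squared-distance map $M$, the congruence classes of frameworks equivalent to $(G,p)$ form the fibre of the coordinate projection $\pi$ restricted to $D=\overline{M(\mathbb{C}^{d|V|})}$ over $f_G(p)$, Asimow--Roth turns generic rigidity into infinitesimal rigidity and hence into $\operatorname{rank} df_G(p)=N=\dim D$, so $\pi|_D$ is generically finite, and the $\mathbb{Q}$-genericity of $p$ makes $f_G(p)$ a $\mathbb{Q}$-generic point of $\overline{\pi(D)}$, avoiding the (over $\mathbb{Q}$ defined) locus where fibre dimension jumps. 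Two minor points you should tighten, neither of which is a real gap: first, the counts $\dim D=d|V|-\binom{d+1}{2}$ and $\operatorname{rank} dM_p=N$ need $|V|\ge d+1$ so that $p$ affinely spans $\R^d$; the excluded cases are trivial because there rigidity forces $G$ to be complete and equivalence already implies congruence. Second, you do not need (and have not justified) that $M(p)$ is a smooth point of $D$ with $T_{M(p)}D=\operatorname{im} dM_p$; it is cleaner to note that $\overline{\pi(D)}$ contains $\overline{f_G(\mathbb{C}^{d|V|})}$, whose dimension equals the generic rank $N$ of $df_G$, so $\dim\overline{\pi(D)}=\dim D$ and generic finiteness of $\pi|_D$ follows without any smoothness claim.
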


The rigidity matroid of a graph $G$ is a matroid defined on the edge set
of $G$ which reflects the rigidity properties of all generic realizations of
$G$.
For a general introduction to matroid theory we refer the reader to \cite{oxley}. 

Let $(G,p)$ be a realization of a graph $G=(V,E)$ in $\R^d$.
The \emph{ rigidity matrix} of the framework $(G,p)$
is the matrix $R(G,p)$ of size
$|E|\times d|V|$, where, for each edge $uv\in E$, in the row
corresponding to $uv$,
the entries in the $d$ columns corresponding to vertices $u$ and $v$ contain
the $d$ coordinates of
$(p(u)-p(v))$ and $(p(v)-p(u))$, respectively,
and the remaining entries
are zeros. 
The rigidity matrix of $(G,p)$ defines
the \emph{rigidity matroid}  of $(G,p)$ on the ground set $E$
by linear independence of the rows. %
It is known that any pair of generic frameworks
$(G,p)$ and $(G,q)$ have the same rigidity matroid.
We call this the $d$-dimensional \emph{rigidity matroid}
${\cal R}_d(G)=(E,r_d)$ of the graph $G$.

We denote the rank of ${\cal R}_d(G)$ by $r_d(G)$.
A graph $G=(V,E)$ is \emph{${\cal R}_d$-independent} if $r_d(G)=|E|$ and it is an \emph{${\cal R}_d$-circuit} if it is not ${\cal R}_d$-independent but every proper 
subgraph $G'$ of $G$ is ${\cal R}_d$-independent. We note that in the literature such graphs are sometimes called $M$-independent in $\R^d$ and $M$-circuits in $\R^d$, respectively. 
An edge $e$ of $G$ is an \emph{${\cal R}_d$-bridge in $G$}
if  $r_d(G-e)=r_d(G)-1$ holds. Equivalently, $e$ is an ${\cal R}_d$-bridge in $G$ if it is not contained in any subgraph of $G$ that is an ${\cal R}_d$-circuit.

The following characterization of rigid graphs is due to Gluck.
\begin{theorem}\label{theorem:gluck}
\cite{Gluck}
\label{combrigid}
Let $G=(V,E)$ be a graph with $|V|\geq d+1$. Then $G$ is rigid in $\R^d$
if and only if $r_d(G)=d|V|-\binom{d+1}{2}$.
\end{theorem}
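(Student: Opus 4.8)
The plan is to prove Gluck's theorem by reducing geometric rigidity to the algebraic rank condition through the notion of \emph{infinitesimal rigidity}. An \emph{infinitesimal motion} of $(G,p)$ is a vector $m\in\R^{d|V|}$ assigning a velocity $m(v)\in\R^d$ to each vertex and satisfying $(p(u)-p(v))\cdot(m(u)-m(v))=0$ for every edge $uv\in E$; equivalently, $m\in\ker R(G,p)$. The derivatives of the isometries of $\R^d$, evaluated at the $p(v)$, yield the \emph{trivial} infinitesimal motions, and whenever the points $p(v)$ affinely span $\R^d$ --- which holds for every generic $p$ since $|V|\geq d+1$ --- these form a subspace of dimension exactly $\binom{d+1}{2}$. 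I call $(G,p)$ infinitesimally rigid if $\ker R(G,p)$ consists only of trivial motions.

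First I would record the algebraic reformulation. By rank--nullity, $\dim\ker R(G,p)=d|V|-\operatorname{rank} R(G,p)$, so $(G,p)$ is infinitesimally rigid if and only if $\operatorname{rank} R(G,p)=d|V|-\binom{d+1}{2}$. For a generic framework $\operatorname{rank} R(G,p)=r_d(G)$ by the definition of the rigidity matroid, and the rank attains its maximum precisely at generic points. Hence a generic $(G,p)$ is infinitesimally rigid exactly when $r_d(G)=d|V|-\binom{d+1}{2}$. This disposes of the equivalence between the rank condition and generic infinitesimal rigidity; it remains to tie infinitesimal rigidity to continuous rigidity.

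The easy implication is that infinitesimal rigidity always implies rigidity. Consider the edge-measurement map $f\colon\R^{d|V|}\to\R^{|E|}$ sending a configuration to the vector of its squared edge lengths; its Jacobian at $p$ equals $2R(G,p)$. If $(G,p)$ is infinitesimally rigid, this Jacobian has the maximal rank $d|V|-\binom{d+1}{2}$, so the fibre $f^{-1}(f(p))$ is, near $p$, a smooth manifold of dimension $\binom{d+1}{2}$. Since the congruent copies of $(G,p)$ already form a manifold of that exact dimension inside the fibre, near $p$ the fibre contains only congruent realizations, so $(G,p)$ is rigid. Combined with the previous paragraph, this gives that $r_d(G)=d|V|-\binom{d+1}{2}$ forces every generic realization to be rigid, hence $G$ rigid in $\R^d$.

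The hard part --- and the main obstacle --- is the converse for generic frameworks: a generic rigid framework is infinitesimally rigid. Here genericity is essential, since at special positions rigidity can coexist with a nontrivial infinitesimal motion. The key point is that $\operatorname{rank} R(G,p)$ is \emph{constant} (equal to $r_d(G)$) on a neighbourhood of a generic $p$, because generic points lie in the Zariski-open set where the rank is maximal; justifying this local constancy from the algebraic independence of the coordinates is the delicate step. Constancy of the rank lets me invoke the constant-rank theorem: the fibre $f^{-1}(f(p))$ is locally a manifold of dimension $d|V|-r_d(G)$. If $(G,p)$ is rigid, this fibre consists locally of congruent copies only and so has dimension $\binom{d+1}{2}$; therefore $d|V|-r_d(G)=\binom{d+1}{2}$, which is the rank condition. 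Thus if $G$ is rigid in $\R^d$, some generic rigid realization forces $r_d(G)=d|V|-\binom{d+1}{2}$. This converse is precisely the content of the Asimow--Roth theorem cited as \cite{AR}, on which I would rely for the rank-stability argument.
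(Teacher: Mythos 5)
The paper does not prove this statement at all --- it is imported verbatim as a cited result of Gluck, so there is no in-paper argument to compare against. Judged on its own, your proof is the standard and essentially correct route (it is really the Asimow--Roth proof, with Gluck's observation that the maximal-rank locus contains all generic points): identify rigidity of a generic framework with infinitesimal rigidity, translate the latter into $\operatorname{rank}R(G,p)=d|V|-\binom{d+1}{2}$ via rank--nullity and the fact that the trivial motions form a $\binom{d+1}{2}$-dimensional subspace of the kernel once the $p(v)$ affinely span $\R^d$ (which $|V|\geq d+1$ and genericity guarantee), and then use the constant-rank theorem on the edge-measurement map in both directions. Two small points you should tighten rather than leave implicit: first, in the ``easy'' direction you also need local constancy of the rank near $p$ before invoking the fibre-is-a-manifold conclusion; this follows because lower semicontinuity gives $\operatorname{rank}R(G,q)\geq\operatorname{rank}R(G,p)$ nearby while the trivial motions force $\operatorname{rank}R(G,q)\leq d|V|-\binom{d+1}{2}$ for every nearby (hence still affinely spanning) $q$. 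Second, the phrase ``the rank attains its maximum precisely at generic points'' overstates the situation --- the maximal-rank locus is a dense open set that also contains many non-generic configurations --- but you only ever use the correct inclusion (every generic point is in that locus, because a not-identically-zero minor cannot vanish on an algebraically independent coordinate tuple), so this is a wording issue, not a gap.
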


A graph is \emph{minimally rigid} in $\R^d$ if it is rigid in $\R^d$ but 
$G-e$ is not rigid in $\R^d$ for
every edge $e$ of $G$. 
By Theorem \ref{theorem:gluck}, minimally rigid graphs in $\R^d$ on at least $d+1$ vertices have exactly $d|V| - \binom{d+1}{2}$ edges.

Let $G=(V,E)$ be a graph and $\{u,v\}$ be a pair of vertices of $G$. 
An induced subgraph $G[X]$ (and the set $X$), for some $X\subseteq V$, is said to be
{\it $(u,v)$-rigid} in $\R^d$ (or simply $(u,v)$-rigid, if $d$ is clear from the context),
if $G[X]$ is rigid in $\R^d$ and $u,v\in X$. 
We say that 
a $(u,v)$-rigid subgraph
$G[X]$ is
{\it vertex-minimally} $(u,v)$-rigid, if $G[X']$ is not $(u,v)$-rigid
for all proper subsets $X'\subset X$.
The pair $\{u,v\}$ is called {\it linked} in $G$ in $\R^d$ if
$r_d(G+uv)=r_d(G)$ holds. It is known that a pair
$\{u,v\}$ is linked in $G$ in $\R^2$ if and only if there exists a $(u,v)$-rigid subgraph of $G$.
A graph $G$ with at least three edges is called
{\it redundantly rigid} in $\R^d$ if $G-e$ is rigid
in $\R^d$
for
all $e\in E(G)$.

Let ${\cal M}$ be a matroid on ground set $E$. 
We can define a relation on the pairs of elements of $E$ by
saying that $e,f\in E$ are
equivalent if $e=f$ or there is a circuit $C$ of ${\cal M}$
with $\{e,f\}\subseteq C$.
This defines an equivalence relation. The equivalence classes are 
the \emph{connected components} of ${\cal M}$.
The matroid is \emph{connected} if 
it has only one connected component.
A graph $G=(V,E)$ is \emph{${\cal R}_d$-connected} if ${\cal R}_d(G)$ is connected.
We shall use the well-known fact that 
if $v$ is a vertex of degree at most $d$ in $G$, then every edge incident with $v$ is an
${\cal R}_d$-bridge in $G$. Hence the addition of a new vertex of degree $d$ to a rigid graph $G$ 
in $\R^d$ preserves rigidity.

For more details on the $2$-dimensional rigidity matroid, see \cite{Jmemoirs}.

\subsection{Globally rigid graphs}

The following necessary conditions for global rigidity are due to Hendrickson.

\begin{theorem} \cite{hend}
    \label{thm:hend}
    Let $G$ be a globally rigid graph in $\R^d$ on at least $d+2$ vertices. Then $G$ is $(d+1)$-connected and
    redundantly rigid in $\R^d$.
\end{theorem}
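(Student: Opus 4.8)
The plan is to establish the two assertions separately, each by contradiction, using the fact that global rigidity trivially implies rigidity.

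For the connectivity statement I would assume that $G$ has a vertex cut $S$ with $|S|\le d$. Since $|V|\ge d+2$, the set $V\setminus S$ contains at least two vertices and splits into at least two components; let $C_1$ be one of them and put $C_2=V\setminus(S\cup C_1)$, so that no edge of $G$ joins $C_1$ to $C_2$. The key observation is that the images $p(S)$ of the at most $d$ cut vertices lie in a common affine hyperplane $H$ of $\R^d$. Let $\sigma_H$ denote the reflection of $\R^d$ across $H$, and define $(G,q)$ by applying $\sigma_H$ to the vertices in $C_1$ and keeping all other vertices fixed. Every edge inside $G[S\cup C_1]$ and every edge inside $G[S\cup C_2]$ retains its length (a reflection is an isometry and fixes $S$ pointwise), and there is no edge between $C_1$ and $C_2$, so $(G,q)$ is equivalent to $(G,p)$. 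On the other hand, for a generic $p$ and any $x\in C_1$, $y\in C_2$, the distance $\|q(x)-q(y)\|=\|\sigma_H(p(x))-p(y)\|$ differs from $\|p(x)-p(y)\|$, so $(G,q)$ is not congruent to $(G,p)$, contradicting global rigidity.

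For redundant rigidity I would suppose that some edge $e=uv$ satisfies that $G-e$ is not rigid, while $G$ itself is rigid. We may assume $G-e$ is connected, for otherwise $e$ is a bridge, $G$ has a cut vertex, and we conclude exactly as in the first part. Then $(G-e,p)$ admits a nontrivial continuous flex $q_t$ (with $q_0=p$) preserving the lengths of all edges of $G-e$. Writing $\ell(t)=\|q_t(u)-q_t(v)\|$, I would first argue that $\ell$ is non-constant: if it were constant, then $q_t$ would preserve the length of $e$ as well, hence be a nontrivial flex of $(G,p)$, contradicting the rigidity of $G$. Thus the length of the missing edge genuinely varies along the flex.

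The decisive step is to convert this variation into a second realization of $G$. I would work in the configuration space $X$ of all frameworks equivalent to $(G-e,p)$ taken modulo congruence; since $G-e$ is connected, this space is compact, and the flex shows that the component $X_0$ containing $[p]$ is positive-dimensional, with $\ell$ non-constant on it. By Proposition \ref{finiteequi} applied to the rigid graph $G$, only finitely many congruence classes of frameworks are equivalent to $(G,p)$; that is, the level set $\ell^{-1}(\ell_0)\cap X_0$, where $\ell_0=\|p(u)-p(v)\|$, is finite. The aim is to exhibit in it a point other than $[p]$, which would be a realization of $G$ equivalent but not congruent to $(G,p)$, contradicting global rigidity. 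I expect this to be the main obstacle: using the compactness of $X_0$ together with the non-constancy of $\ell$, I would flex in the direction in which $\ell$ moves away from $\ell_0$ up to an extreme value of $\ell$ on $X_0$, and then argue, via the intermediate value theorem and the genericity of $p$ (which guarantees that $\ell_0$ is not a one-sided extreme value of $\ell$ attained only at $[p]$), that $\ell$ must return to the value $\ell_0$ at a configuration different from $[p]$. Making this ``flex and return'' argument precise, in particular ruling out the degenerate possibility that $\ell_0$ is an extreme value realized solely at $[p]$, is the crux of the proof.
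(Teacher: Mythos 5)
The paper does not prove this statement at all: Theorem~\ref{thm:hend} is quoted from Hendrickson \cite{hend}, so the only fair comparison is with his original argument, which is indeed the one you are reconstructing. Your first half is complete and correct: the reflection of one component of $G-S$ across a hyperplane through the at most $d$ points of $p(S)$ is exactly the standard proof of $(d+1)$-connectivity, and the genericity argument for non-congruence is sound.

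The second half, however, has a genuine gap, and you have located it yourself: the ``flex and return'' step is not a loose end one can defer, it is the entire content of Hendrickson's proof of redundant rigidity, and compactness plus connectedness plus non-constancy of $\ell$ do \emph{not} suffice. (If $X_0$ were, say, an arc with $[p]$ as an endpoint and $\ell$ monotone along it, $\ell_0$ would be attained only at $[p]$.) The two missing ingredients are: (a) for generic $p$ the component $X_0$ is a smooth compact $1$-manifold \emph{without boundary}, hence a circle --- this uses that $r_d(G-e)=r_d(G)-1=d|V|-\binom{d+1}{2}-1$ (so the fiber of the edge-length map of $G-e$, after factoring out congruences, has dimension one) together with the regular value theorem at a generic configuration; and (b) $[p]$ is a \emph{regular} point of $\ell$ on this circle, because a tangent vector to $X_0$ at $[p]$ annihilated by $d\ell$ would be a nontrivial infinitesimal flex of $(G,p)$ itself, contradicting the infinitesimal rigidity of the rigid graph $G$ at a generic configuration. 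Only with the circle structure and (b) does the intermediate value argument work: travelling from $[p]$ in both directions around the circle, $\ell$ initially increases on one side and decreases on the other, so on the arc joining the maximum to the minimum that avoids $[p]$ it passes through $\ell_0$ again, producing the desired equivalent, non-congruent realization of $G$. Your appeal to ``genericity of $p$'' gestures at (b) but does not identify infinitesimal rigidity as the mechanism, and (a) is absent; as written the argument does not close.
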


For $d=1,2$
the conditions of Theorem \ref{thm:hend} together are sufficient to imply global rigidity.
It is not the case for $d\geq 3$.
The characterization of globally rigid graphs in $\R^2$ is as follows.

\begin{theorem} \cite{JJconnrig}
\label{thm} 
Let $G$ be a graph on at least four vertices.
Then $G$ is globally rigid in $\R^2$ if and only if $G$ is $3$-connected and redundantly rigid in $\R^2$.
\end{theorem}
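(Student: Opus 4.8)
The statement splits into necessity and sufficiency, and only the latter requires real work. For necessity, suppose $G$ is globally rigid in $\R^2$; since here $d=2$ and $|V(G)|\geq 4=d+2$, Theorem \ref{thm:hend} immediately yields that $G$ is $3$-connected and redundantly rigid in $\R^2$. No machinery beyond Hendrickson's conditions is needed in this direction.

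For sufficiency, assume $G$ is $3$-connected and redundantly rigid in $\R^2$; the plan is to route the argument entirely through weak global linkedness, following the strategy announced in the introduction. The key reduction is the fact (to be established as Lemma \ref{J}(c)) that $G$ is globally rigid in $\R^d$ as soon as \emph{every} vertex pair of $G$ is weakly globally linked in $G$. Hence it suffices to prove that, under the present hypotheses, an arbitrary pair $\{u,v\}$ is weakly globally linked in $\R^2$. To do this I would apply the sufficient condition for weak global linkedness in $(d+1)$-connected graphs (Theorem \ref{globlaza jellemzesd}) with $d=2$: the $3$-connectivity of $G$ supplies the connectivity hypothesis of that theorem directly, so the task reduces to verifying its remaining combinatorial requirement for each pair. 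Redundant rigidity is exactly what should feed this requirement, since $G-e$ being rigid for every edge $e$ forces the $(u,v)$-rigid substructures (rigid subgraphs containing both $u$ and $v$ that survive the deletion of a single edge) demanded by the sufficient condition.

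The main obstacle is making this verification uniform over all pairs $\{u,v\}$ simultaneously, rather than pair by pair. I expect to handle it through the matroidal structure of $G$: a $3$-connected redundantly rigid graph in the plane should be ${\cal R}_2$-connected, and this connectivity of ${\cal R}_2(G)$ controls how the various rigid subgraphs overlap and interlock, which is what allows one to produce the certificates required by Theorem \ref{globlaza jellemzesd} for every vertex pair at once. Once that uniformity is secured, the implications ``the sufficient condition holds for each pair $\Rightarrow$ every pair is weakly globally linked $\Rightarrow$ $G$ is globally rigid (Lemma \ref{J}(c))'' chain together to close the argument. For contrast, a proof from first principles would instead invoke Connelly's stress-matrix sufficient condition together with an inductive build-up of $3$-connected redundantly rigid graphs from $K_4$; the value of the approach above is that it replaces that apparatus with the weak-linkedness framework developed in the paper.
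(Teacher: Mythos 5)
Your skeleton is the right one and matches the paper's: necessity is exactly Theorem \ref{thm:hend}, and sufficiency is routed through Lemma \ref{J}(c) and Theorem \ref{globlaza jellemzesd}, with ${\cal R}_2$-connectedness (Lemma \ref{rrmc}) entering as you predict. But there is a genuine gap at the point you leave vaguest. Theorem \ref{globlaza jellemzesd} requires two things for the pair $\{u,v\}$: a $(u,v)$-rigid induced subgraph $G[X]$, \emph{and} that ${\rm Clique}(G,X)$ be globally rigid. You never address the second requirement, and it is the heart of the proof: demanding that ${\rm Clique}(G,X)$ be globally rigid is a statement of the very same kind as the theorem being proved, so applied naively the argument is circular. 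The paper breaks the circularity by induction on $|V|$: it first produces a \emph{proper} induced $(u,v)$-rigid subgraph $G[X]$ (take an ${\cal R}_2$-circuit $C$ through edges at $u$ and $v$, note that $|E(C)|=2|V(C)|-2$ with minimum degree $3$ forces at least four vertices of degree $3$, and delete one such vertex $w\notin\{u,v\}$), and then shows that ${\rm Clique}(G,X)$ is itself $3$-connected and redundantly rigid --- the redundant rigidity being the delicate step, where one must rule out ${\cal R}_2$-bridges among the newly added clique edges via Lemma \ref{rrtri} --- so that the inductive hypothesis applies to it. None of this is in your proposal.

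You also misidentify the main obstacle. There is no issue of ``uniformity over all pairs'': the paper simply treats each non-adjacent pair separately and invokes Lemma \ref{J}(c) at the end. Your remark that redundant rigidity ``forces the $(u,v)$-rigid substructures\dots that survive the deletion of a single edge'' also does not match what the sufficient condition asks for; what redundant rigidity (via ${\cal R}_2$-connectedness) actually buys is the existence of an ${\cal R}_2$-circuit through any two prescribed edges, which is how the proper $(u,v)$-rigid subgraph is found. To complete your argument you would need to supply the induction, the circuit-minus-a-degree-three-vertex construction, and the verification that ${\rm Clique}(G,X)$ inherits $3$-connectivity and redundant rigidity.
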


An equivalent characterization of global rigidity, in terms of the rigidity matroid of $G$, follows
from the next lemma.

\begin{lemma} \cite[Lemma 3.1, Theorem 3.2]{JJconnrig}
\label{rrmc}
Let $G$ be a graph with at least two edges.
If $G$ is ${\cal R}_2$-connected, then $G$ is redundantly rigid in $\R^2$.
Furthermore, if $G$ is $3$-connected and redundantly rigid in $\R^2$, then $G$ is 
${\cal R}_2$-connected.
\end{lemma}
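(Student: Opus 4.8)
The plan is to prove Lemma \ref{rrmc} in two parts, corresponding to its two assertions.

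The plan is to prove Lemma~\ref{rrmc} in its two separate parts.

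\medskip

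\noindent\textbf{Part 1: $\mathcal{R}_2$-connected implies redundantly rigid.}
First I would observe that if $G$ is $\mathcal{R}_2$-connected and has at least two edges, then $\mathcal{R}_2(G)$ is a connected matroid with more than one element, so no element is a loop or a coloop (bridge). In matroid terms, every edge lies in some $\mathcal{R}_2$-circuit. Unpacking this: for every edge $e$ we have $r_2(G-e)=r_2(G)$, since $e$ is not an $\mathcal{R}_2$-bridge. The key step is then to translate this rank equality into rigidity of $G-e$ via Gluck's theorem (Theorem~\ref{theorem:gluck}). For this I first need $G$ itself to be rigid: connectedness of the matroid on at least two edges forces $G$ to have at least three vertices and, since the matroid has a single component spanning all edges, one can argue $r_2(G)=2|V|-3$, so $G$ is rigid by Theorem~\ref{theorem:gluck}. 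Because removing a non-bridge edge preserves the rank, $r_2(G-e)=r_2(G)=2|V|-3$, and $G-e$ has the same vertex set, so $G-e$ is rigid in $\R^2$ by Gluck again. As this holds for every edge $e$, the graph $G$ is redundantly rigid.

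\medskip

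\noindent\textbf{Part 2: $3$-connected and redundantly rigid implies $\mathcal{R}_2$-connected.}
This is the harder direction and I expect it to be the main obstacle. The plan is to argue by contradiction: suppose $G$ is $3$-connected and redundantly rigid but $\mathcal{R}_2(G)$ decomposes into at least two connected components, inducing a partition of $E$ into edge sets $E_1,\dots,E_k$ with $k\ge 2$. Each component $G_i=(V_i,E_i)$ is itself $\mathcal{R}_2$-connected (or a single edge), and by Part~1 each nontrivial component is redundantly rigid, hence rigid, so it satisfies $r_2(G_i)=2|V_i|-3$. The central structural fact to exploit is that the rank function of the rigidity matroid is submodular and that components of a matroid meet in a very restricted way: distinct components share at most a limited intersection. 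For the rigidity matroid one can show that two rigid $\mathcal{R}_2$-components can share at most one vertex, because if they shared two or more vertices their union would create an $\mathcal{R}_2$-circuit crossing the components, contradicting that they are distinct components. The hard part will be to extract from this a low-order vertex separation of $G$, contradicting $3$-connectivity. Concretely, if some component $G_i$ is attached to the rest of $G$ through only one or two shared vertices, those vertices form a cut set of size at most two (one must also rule out the degenerate cases where a component is a single edge, handled by the redundant rigidity hypothesis that forbids $\mathcal{R}_2$-bridges). Since $G$ is $3$-connected, no such small separator can exist, forcing $k=1$, i.e.\ $\mathcal{R}_2$-connectedness.

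\medskip

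\noindent The main obstacle is the combinatorial geometry of how rigid components overlap in Part~2: converting the matroid-theoretic statement ``distinct components share few elements'' into the graph-theoretic statement ``shared vertices form a small cut'' requires carefully tracking which vertices are covered by which component's edge set and verifying that the components together span $V$ with controlled overlaps. I would handle this by a counting argument on $\sum_i (2|V_i|-3)$ versus $2|V|-3$, using submodularity of $r_2$ to bound the total rank of the union in terms of the component ranks plus corrections for shared vertices, and then reading off the separator directly from any pair of components meeting in at most two vertices.
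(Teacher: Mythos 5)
First, a point of reference: the paper does not prove this lemma at all --- it is quoted from Jackson and Jord\'an \cite{JJconnrig} (their Lemma 3.1 and Theorem 3.2) --- so your proposal can only be measured against the known proof there, whose overall shape you have correctly guessed. Within that shape, however, both halves of your argument leave their decisive step unproved. In Part 1 the entire content of the first assertion is the claim that an $\mathcal{R}_2$-connected graph is rigid, i.e.\ that $r_2(G)=2|V|-3$; you dispose of this in one clause (``one can argue\dots''). The missing ingredients are (i) that every $\mathcal{R}_2$-circuit $C$ is rigid, which follows from the count $|E(C)|=2|V(C)|-2$, hence $r_2(C)=|E(C)|-1=2|V(C)|-3$, together with Theorem \ref{theorem:gluck}, and (ii) the gluing fact that the union of two rigid subgraphs sharing at least two vertices is rigid, which lets you chain circuits together (any two edges lie in a common circuit by matroid connectivity, and that circuit shares an edge, hence two vertices, with a maximal rigid subgraph). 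Once rigidity of $G$ is established, your deduction that $r_2(G-e)=r_2(G)=2|V|-3$ forces $G-e$ to be rigid is fine.

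In Part 2 the gap is more serious. Your structural fact --- two rigid $\mathcal{R}_2$-components share at most one vertex --- is correct (if they shared two, the rank of their union would be strictly less than the sum of their ranks, contradicting additivity of rank over matroid components), but it does not by itself yield a small separator: a single component could meet the union of the others in arbitrarily many vertices, one per neighbouring component. What is actually needed, and what you only gesture at, is the exact identity $2|V|-3=\sum_{i=1}^{k}(2|V_i|-3)$, which uses \emph{exact} additivity of rank over the $M$-components (not submodularity, which gives an inequality in the useless direction) plus rigidity of $G$ and of each component. This rearranges to $2\sum_{v}(t(v)-1)=3(k-1)$, where $t(v)$ is the number of components containing $v$, and an averaging argument then produces a single component $H_j$ having at most two vertices in common with the other components; since every edge incident with a vertex lying only in $H_j$ belongs to $H_j$, those at most two vertices separate the rest of $V_j$ from $V\setminus V_j$ (after checking $V_j\neq V$ and $|V_j|\geq 4$, the latter because the smallest $\mathcal{R}_2$-circuit is $K_4$ and single-edge components are excluded by redundant rigidity, as you note). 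Your closing sentence locates the separator in ``any pair of components meeting in at most two vertices,'' which misidentifies the source of the contradiction: the cut is the attachment set of one component, not the intersection of a pair. As written, the proposal is a plausible plan with the two decisive computations missing.
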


We shall also use the following lemma.

\begin{lemma} \cite[Lemma 6.2]{JKT}
    \label{rrtri}
    Let $G$ be a rigid, but not redundantly rigid graph in $\R^2$, and suppose that all ${\cal R}_2$-bridges of $G$
    are edges of the same triangle in $G$. Then $G$ is not 3-connected.
\end{lemma}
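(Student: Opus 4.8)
The plan is to argue by contradiction: assume $G$ is $3$-connected and produce a $2$-separation. Since $G$ is rigid but not redundantly rigid it has at least one ${\cal R}_2$-bridge, and by hypothesis every bridge is an edge of a fixed triangle $T$ on $\{a,b,c\}$. I would fix one bridge $e=xy\in E(T)$ and set $H=G-e$. As $G$ is $3$-connected, $H$ is $2$-connected, $r_2(H)=r_2(G)-1=2|V|-4$, and $x,y$ are not linked in $H$ (otherwise $e$ would lie in an ${\cal R}_2$-circuit, contradicting that it is a bridge). It then suffices to exhibit two vertices whose deletion disconnects $H$ in a way that the single re-added edge $e$ cannot repair.

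Next I would set up the ${\cal R}_2$-component toolkit. Decompose $E(H)$ into its ${\cal R}_2$-components $C_1,\dots,C_k$: each nontrivial $C_j$ is redundantly rigid by Lemma~\ref{rrmc}, hence rigid and $2$-connected on at least three vertices, while each bridge is a single-edge component. I use two facts: the standard one that distinct ${\cal R}_2$-components meet in at most one vertex (so the $C_j$ partition $E(H)$ and glue only at shared vertices), and that matroid rank is additive over components, giving $\sum_j r_2(C_j)=r_2(H)$. Writing $n_j=|V(C_j)|$, so $r_2(C_j)=2n_j-3$, and letting $\beta$ be the first Betti number of the (connected) bipartite incidence graph on $\{C_j\}\cup\{\text{shared vertices}\}$, an Euler count together with inclusion--exclusion for $|V(H)|=|V|$ turns $\sum_j(2n_j-3)=2|V|-4$ into the clean identity $k=2+2\beta$.

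The tree case $\beta=0$ (hence $k=2$) is immediate: $H$ has exactly two ${\cal R}_2$-components $P,Q$ meeting in a single vertex $w$, and since $x,y$ are unlinked we may assume $x\in V(P)\setminus\{w\}$ and $y\in V(Q)\setminus\{w\}$. Then $\{w,x\}$ separates $V(P)\setminus\{w,x\}$ from $V(Q)\setminus\{w\}$ in $G$, because in $H$ the only vertex joining the two sides is $w$, and the only extra edge in $G$ is $e=xy$, which is destroyed by deleting $x$; symmetrically $\{w,y\}$ works. One of these is a genuine $2$-separator unless $|V(P)|=|V(Q)|=2$, i.e.\ $|V|=3$, which is excluded by $3$-connectivity. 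This contradiction settles $\beta=0$.

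The crux is the cyclic case $\beta\ge 1$, where the incidence graph contains a cycle whose shortest instances are triangles of three ${\cal R}_2$-components meeting pairwise in three distinct vertices. Here I would invoke the hypothesis that all bridges lie on the single triangle $T$. Each bridge is a coloop whose endpoints remain connected after its deletion, so it forces such a component-triangle; confining all bridges to $T$ confines the cyclic part of the incidence structure to a neighbourhood of $\{a,b,c\}$, so that away from $T$ the structure is a tree and therefore has a pendant (leaf) component. A pendant nontrivial component meets the rest of $G$ in at most two vertices (one shared vertex, together with at most one endpoint of a bridge of $T$) and, possessing an interior vertex, is cut off by those two vertices, once more contradicting $3$-connectivity. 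Making this localization precise, and handling the degenerate configurations in which the pendant block is a single triangle edge or spans almost all of $V$, is the main obstacle; this is also the step that genuinely requires the single-triangle hypothesis, since $3$-connected rigid graphs that fail to be redundantly rigid do exist once the bridges are allowed to spread over several triangles.
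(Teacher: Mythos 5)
This lemma is not proved in the paper at all: it is imported verbatim from \cite[Lemma 6.2]{JKT}, so there is no in-paper argument to compare against. Judged on its own, your attempt is incomplete. The setup is sound: deleting a bridge $e=xy$, decomposing $H=G-e$ into ${\cal R}_2$-components, using additivity of rank over matroid components together with the fact that two components share at most one vertex, and deriving $k=2+2\beta$ from $r_2(H)=2|V|-4$ all check out (assuming, as you may, that $H$ is connected and has no isolated vertices), and your treatment of the tree case $\beta=0$ is correct. But the cyclic case $\beta\geq 1$ --- which you yourself call the crux --- is not a proof; it is a heuristic whose key localization claim is unjustified and, as stated, false. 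Cycles in the component--vertex incidence structure need not involve any single-edge components at all: three or more \emph{nontrivial} rigid components meeting pairwise in distinct vertices can form a cycle while the union remains ${\cal R}_2$-disconnected (this is exactly the ring-of-bodies phenomenon that motivates the body-hinge paper the lemma comes from). So confining the bridges to the triangle $T$ does not confine the cyclic part of the incidence graph to a neighbourhood of $\{a,b,c\}$, and the promised pendant component need not exist: already for $\beta=1$, $k=4$ the incidence graph can be a single cycle with every component node of degree $2$, i.e.\ with no leaf.

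There is a second, more technical slip in the same step: the hypothesis restricts the ${\cal R}_2$-bridges of $G$ to $T$, but your incidence structure is built from the components of $H=G-e$, and deleting $e$ can create new coloops anywhere in the graph. Hence the trivial (single-edge) components of $H$ are not confined to $T$, and the sentence ``each bridge \dots forces such a component-triangle'' has no justification. Finally, even where a pendant nontrivial component does exist, it yields a $2$-separator of $G$ only when it has at least two interior vertices (one shared vertex plus one endpoint of $e$ must be deleted), and you explicitly defer the degenerate configurations. In short: the skeleton (bridge deletion, component decomposition, Betti-number count, tree case) is fine, but the case $\beta\geq 1$ is where the single-triangle hypothesis must actually be used, and that case is asserted rather than proved.
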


\section{Properties of weakly globally linked pairs in $\R^d$}
\label{sec:pro}

We first collect some basic properties that hold in $\R^d$ for all $d\geq 1$.
The following lemma was stated for $d=2$ in \cite{JJS} but the proof works
for all $d\geq 1$.
An edge $e$ of a globally rigid graph $H$ is {\it critical} if $H-e$ is not
globally rigid.

\begin{lemma} \cite[Lemma 7.1]{JJS}
\label{critical}
Let $G=(V,E)$ be a graph and $u,v\in V$. Suppose that $uv\notin E$, and that
$G$ has a globally rigid supergraph in $\R^d$ in which $uv$ is a critical edge.
Then $\{u,v\}$ is globally loose in $G$ in $\R^d$.
\end{lemma}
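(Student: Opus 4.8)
The plan is to prove the contrapositive of the logical structure: I want to show that in \emph{every} generic realization $(G,p)$, the pair $\{u,v\}$ fails to be globally linked, by exhibiting an equivalent framework in which the $u$--$v$ distance differs. The key hypothesis is that $G$ sits inside a globally rigid supergraph $H$ in which $uv$ is a critical edge, meaning $H - uv$ is not globally rigid. My first step is to fix an arbitrary generic realization $(G,p)$ and extend it to a generic realization $(H,p')$ of the whole supergraph $H$ (extending the coordinate map on the extra vertices of $H$, if any, keeping the whole point set generic). Since $H$ is globally rigid in $\R^d$, the framework $(H,p')$ is globally rigid.

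The heart of the argument is to exploit criticality of $uv$ in $H$. Because $H - uv$ is not globally rigid in $\R^d$, its generic realization $(H-uv,p')$ admits an equivalent but \emph{incongruent} framework $(H-uv,q')$. The whole point of using an \emph{edge} that is critical (rather than some vertex pair that merely fails to be linked) is that $uv$ is itself an edge of $H$, so in $(H,p')$ versus the extension of $q'$, the length of $uv$ is the one quantity that can be forced to change while all the edges of $H-uv$ keep their lengths. More precisely, I would argue that among the finitely many congruence classes of frameworks equivalent to $(H-uv,p')$ — finiteness holds because $H-uv$ is rigid (it is globally rigid in $H$ only after adding $uv$, but rigidity of $H-uv$ follows since $H$ is rigid and removing a single non-bridge edge preserves rigidity), so Proposition~\ref{finiteequi} applies — there must be one, say $(H-uv,q')$, in which $\|q'(u)-q'(v)\| \neq \|p'(u)-p'(v)\|$. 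If every equivalent realization of $H-uv$ preserved the $u$--$v$ distance, then adding back the edge $uv$ would make $(H,p')$ globally rigid together with all those realizations collapsing to congruent ones, contradicting that $uv$ is critical, i.e.\ that $H-uv$ is not globally rigid.

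Having produced $(H-uv,q')$ with a different $u$--$v$ length, I restrict it to the vertex set and edge set of $G$. Since $uv \notin E(G)$ and $E(G) \subseteq E(H-uv)$ (as $uv$ is the only edge removed and it is not in $G$), the restriction $q := q'|_{V(G)}$ gives a framework $(G,q)$ that is equivalent to $(G,p)$: all edges of $G$ are edges of $H-uv$ and hence keep their lengths under $q'$. Yet $\|q(u)-q(v)\| = \|q'(u)-q'(v)\| \neq \|p'(u)-p'(v)\| = \|p(u)-p(v)\|$. Thus $\{u,v\}$ is not globally linked in the arbitrary generic framework $(G,p)$, and since $(G,p)$ was generic and arbitrary, $\{u,v\}$ is globally loose in $G$ in $\R^d$.

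The main obstacle I anticipate is the bookkeeping when $H$ has more vertices than $G$: I must make sure that the incongruent equivalent realization of $H-uv$ can be chosen so that its restriction to $V(G)$ still records a genuinely different $u$--$v$ distance, and that the restriction remains equivalent on exactly the edges of $G$. The cleanest way to handle this is to observe that $u,v \in V(G) \subseteq V(H)$ and that criticality of $uv$ directly yields incongruent equivalent realizations of $H-uv$; the subtlety is confirming that the distance that changes is precisely $\|u-v\|$ rather than some other pair, which is exactly what ``critical edge'' buys us, since in a globally rigid $H$ it is the addition of $uv$ alone that pins down this distance. Verifying genericity is also needed so that the finiteness and the rigidity-matroid facts apply to $(H,p')$; this follows from extending a generic point set by algebraically independent coordinates.
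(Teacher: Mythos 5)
Your argument is correct and is essentially the standard proof of this lemma, which the paper itself does not reprove but imports from \cite{JJS}: extend an arbitrary generic realization of $G$ to a generic realization of the supergraph $H$, use criticality of $uv$ together with the fact that generic global rigidity is a property of the graph to produce an equivalent realization of $H-uv$ in which the $u$--$v$ distance changes, and then restrict to $V(G)$. The only blemish is the detour through Proposition~\ref{finiteequi}: finiteness of the congruence classes is not actually needed (and your justification that $H-uv$ is rigid is shaky when $H$ has at most $d+1$ vertices), since the existence of an equivalent realization of $(H-uv,p')$ with a different $u$--$v$ distance follows directly from your own observation that otherwise every such realization would be congruent to $(H,p')$, making $H-uv$ globally rigid and contradicting criticality.
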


We shall frequently use the next key lemma. For a graph $G=(V,E)$ and integer $d\geq 1$
let
$$J_d(G)=\{uv : u,v\in V, uv\notin E, \{u,v\} \ \hbox{is weakly globally linked in}\ G\ \hbox{in}\ \R^d\}.$$

\begin{lemma}
\label{J}
Let $G=(V,E)$ be a graph and let $F$ be a set of edges on vertex set $V$.  
Then the following hold.\\ 
(a) If $G+J_d(G)+F$ is globally rigid in $\R^d$, then $G+F$ is globally rigid in $\R^d$.\\
(b) If $G+uv$ is globally rigid in $\R^d$ for some $uv\in J_d(G)$, then $G$ is globally rigid in $\R^d$.\\
(c) $G$ is globally rigid in $\R^d$ if and only if 
all pairs of vertices in $G$ are weakly globally linked in $\R^d$.
\end{lemma}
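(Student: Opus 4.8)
The plan is to prove (a) first via an edge-peeling induction based on Lemma \ref{critical}, and then to deduce (b) and the nontrivial direction of (c) from (a). For part (a), enumerate the elements of $J_d(G)$ as $e_1, \dots, e_k$ and set $H_0 = G + J_d(G) + F$ and $H_i = H_{i-1} - e_i$, so that $H_k = G + F$. Since each $e_i = u_iv_i$ lies in $J_d(G)$, it is a non-edge of $G$ and the pair $\{u_i, v_i\}$ is weakly globally linked in $G$; moreover $G \subseteq H_{i-1}$ throughout, because the only edges ever removed are those of $J_d(G)$, never edges of $G$ or of $F$. I would show by induction, starting from the hypothesis that $H_0$ is globally rigid, that each $H_i$ is globally rigid. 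At step $i$ the graph $H_{i-1}$ is a globally rigid supergraph of $G$ on the same vertex set $V$ that contains the non-edge $e_i$ of $G$; if $e_i$ were critical in $H_{i-1}$, then Lemma \ref{critical} would force $\{u_i, v_i\}$ to be globally loose in $G$, contradicting $e_i \in J_d(G)$. Hence $e_i$ is not critical in $H_{i-1}$, so $H_i = H_{i-1} - e_i$ is globally rigid, and after $k$ steps $G + F = H_k$ is globally rigid. The one point that must be handled carefully is that Lemma \ref{critical} is applied with the fixed graph $G$ as its base, not with the current graph $H_{i-1}$: it is weak global linkedness in $G$ that membership in $J_d(G)$ supplies, while the role of $H_{i-1}$ is solely to serve as a globally rigid supergraph of this fixed $G$ that contains $e_i$.

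Part (b) then follows quickly. If $G + uv$ is globally rigid for some $uv \in J_d(G)$, then enlarging its edge set by the remaining edges of $J_d(G)$ preserves global rigidity, since on a fixed vertex set all pairwise distances remain determined; thus $G + J_d(G)$ is globally rigid, and applying part (a) with $F = \emptyset$ yields that $G$ is globally rigid.

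For part (c), the forward implication is immediate from the definitions: if $G$ is globally rigid, then every vertex pair is globally linked in every generic framework, and hence in particular weakly globally linked. For the converse, suppose every vertex pair is weakly globally linked in $G$. Each pair that is already an edge is trivially globally linked, and each non-edge then belongs to $J_d(G)$, so $G + J_d(G)$ is the complete graph $K_V$, which is globally rigid because every pairwise distance is an edge length. Applying part (a) with $F = \emptyset$ to $G + J_d(G) = K_V$ shows that $G$ is globally rigid, as required.
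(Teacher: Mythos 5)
Your proposal is correct and is essentially the paper's argument: all three parts rest on applying Lemma \ref{critical} with the fixed base graph $G$ and a globally rigid supergraph in which an edge of $J_d(G)$ would have to be critical, and (b) and (c) are reduced to (a) with $F=\emptyset$ exactly as in the paper. The only cosmetic difference is that you peel the edges of $J_d(G)$ off $G+J_d(G)+F$ by a downward induction, whereas the paper argues by contradiction, adding edges of $J_d(G)$ to a non-globally-rigid $G+F$ until the last added edge is critical; the two formulations are interchangeable (and in both one should, strictly, only peel/add the edges of $J_d(G)\setminus F$, a harmless normalization).
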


\begin{proof} Let us fix $d$ and put $J=J_d(G)$.

(a) Suppose, for a contradiction, that $G+J+F$ is globally rigid and $G+F$ is not. Then there is a (possibly empty) subset $J'\subset J$ and an edge $uv\in J-J'$
for which $G+J'+F$ is not globally rigid, but $\bar G=G+J'+F+uv$ is globally rigid. Then $uv$ is a critical
edge
in $\bar G$, and hence $\{u,v\}$ is globally loose in $G$ by Lemma \ref{critical},
a contradiction.

(b) If $G+uv$ is globally rigid for some $uv\in J$ then $G+J$ is globally rigid. Thus putting $F=\emptyset$ and applying (a)
gives that $G$ is globally rigid.

(c)  Necessity is obvious. If all pairs of vertices in $G$ are weakly globally linked, then $G+J$ is a complete graph, which is globally rigid.
Again, putting $F=\emptyset$ and applying (a)
gives that $G$ is globally rigid.
\end{proof}

It is well-known that if  $\{u,v\}$ is not linked in $G$ in $\R^d$, then every generic $d$-dimensional realization $(G,p)$ has a flex (i.e. a continuous motion of the vertices that preserves the edge lengths)
to another framework $(G,q)$, for which $||p(u)-p(v)||\not= ||q(u)-q(v)||$.
This implies the next lemma.

\begin{lemma}
\label{notlinked}
Let $G=(V,E)$ be a graph and let $\{u,v\}$ be a non-adjacent vertex pair.
If $\{u,v\}$ is not linked in $G$ in $\R^d$ then $\{u,v\}$ is globally loose in $G$ in $\R^d$.
\end{lemma}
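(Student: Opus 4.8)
The plan is to prove Lemma \ref{notlinked} by combining the flexibility fact stated just before it with the definition of weak global linkedness. Recall that $\{u,v\}$ being \emph{weakly} globally loose means that $\{u,v\}$ fails to be globally linked in \emph{every} generic realization $(G,p)$. So the natural strategy is: take an arbitrary generic realization $(G,p)$ and exhibit an equivalent realization $(G,q)$ with $\|p(u)-p(v)\|\ne\|q(u)-q(v)\|$, thereby showing $\{u,v\}$ is not globally linked in $(G,p)$. Since $(G,p)$ was an arbitrary generic framework, this yields that $\{u,v\}$ is globally loose in $G$.

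First I would invoke the hypothesis that $\{u,v\}$ is not linked in $G$ in $\R^d$, i.e. $r_d(G+uv) > r_d(G)$. The statement immediately preceding the lemma records exactly what we need: when $\{u,v\}$ is not linked, every generic realization $(G,p)$ admits a flex (a continuous edge-length-preserving motion of the vertices) to some framework $(G,q)$ with $\|p(u)-p(v)\|\ne\|q(u)-q(v)\|$. The point is that adding the edge $uv$ would strictly increase the rank, so $uv$ imposes a genuinely independent constraint; consequently the distance $\|p(u)-p(v)\|$ is not determined by the edge lengths of $G$, and one can move the vertices along a length-preserving path that changes this distance.

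The second step is bookkeeping with the quantifiers. For the chosen generic $(G,p)$, the framework $(G,q)$ obtained from the flex is by construction equivalent to $(G,p)$ (the flex preserves all edge lengths of $G$) yet has $\|p(u)-p(v)\|\ne\|q(u)-q(v)\|$. By the definition of globally linked in a framework, this certifies that $\{u,v\}$ is not globally linked in $(G,p)$. Because $(G,p)$ was an arbitrary generic realization and we produced such a $(G,q)$ in each case, no generic realization globally links $\{u,v\}$; hence $\{u,v\}$ is globally loose in $G$ in $\R^d$, which is precisely the conclusion.

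The main conceptual obstacle is really the flexibility fact itself, but that is quoted as known in the sentence preceding the lemma, so the lemma reduces to a short quantifier argument. If one wanted a self-contained proof, the hard part would be justifying the existence of the flex: one would argue that the non-linked pair means $p(u),p(v)$ lie in a positive-dimensional configuration space of realizations equivalent to $(G,p)$, and that generically the map sending a configuration to the distance $\|\cdot(u)-\cdot(v)\|$ is non-constant on this space, so the distance can be varied. Given the cited fact, however, I would keep the proof to the two short paragraphs above.
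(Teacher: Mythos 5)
Your proof is correct and follows essentially the same route as the paper, which likewise derives the lemma directly from the quoted flexibility fact (the paper gives no further proof beyond ``This implies the next lemma''); your explicit quantifier bookkeeping just spells out what the authors leave implicit. One cosmetic note: the paper's term is ``globally loose,'' not ``weakly globally loose,'' though the meaning you use --- not globally linked in any generic realization --- is the right one.
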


Let $H=(V,E)$ be a graph and $x,y\in V$. We use $\kappa_H(x,y)$ to
denote the maximum number of pairwise internally disjoint $xy$-paths in
$H$. Note that if $xy\notin E$ then, by Menger's theorem,
$\kappa_H(x,y)$ is equal to the size of a smallest set $S\subseteq
V-\{x,y\}$ for which there is no $xy$-path in $H-S$.
The following lemma is the $d$-dimensional and slightly stronger 
version of \cite[Lemma 5.6]{JJS}.

\begin{lemma}
\label{kappa}
Let $G=(V,E)$ be a graph and let $\{u,v\}$ be a non-adjacent vertex pair
with $\kappa_G(u,v)\leq d$. Then $\{u,v\}$ is globally loose in $G$ in $\R^d$.
\end{lemma}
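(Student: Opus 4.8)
The plan is to use the standard reflection (``folding'') argument across a hyperplane determined by a small separator. First I would invoke Menger's theorem: since $uv\notin E$ and $\kappa_G(u,v)\le d$, there is a vertex set $S\subseteq V-\{u,v\}$ with $|S|\le d$ such that $G-S$ has no $uv$-path. Let $A$ be the vertex set of the connected component of $G-S$ containing $u$, and put $B=V-S-A$, so that $v\in B$ and no edge of $G$ joins $A$ and $B$. Now fix an \emph{arbitrary} generic realization $(G,p)$; the goal is to produce an equivalent framework $(G,q)$ with $\|q(u)-q(v)\|\ne\|p(u)-p(v)\|$, which shows that $\{u,v\}$ is not globally linked in $(G,p)$, and hence (since $p$ is an arbitrary generic realization) that $\{u,v\}$ is globally loose in $G$ in $\R^d$.

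To build $q$, I would choose a suitable reflecting hyperplane. Since $p$ is generic and $|S|\le d$, the points $\{p(x):x\in S\}$ span an affine subspace $L$ of dimension $|S|-1\le d-1$, and, because $|S|+1\le d+1$, genericity also gives $p(u)\notin L$ and $p(v)\notin L$. I can therefore pick a hyperplane $H\supseteq L$ with $p(u)\notin H$ and $p(v)\notin H$: if $|S|=d$ then $L$ is already a hyperplane and serves as $H$, while if $|S|<d$ then $H$ can be selected from the pencil of hyperplanes through $L$ so as to avoid the two points $p(u),p(v)$. Let $\rho$ be the reflection of $\R^d$ across $H$, and define $q(x)=p(x)$ for $x\in A\cup S$ and $q(x)=\rho(p(x))$ for $x\in B$.

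It then remains to verify two facts. First, $(G,q)$ is equivalent to $(G,p)$: every edge of $G$ lies either inside $A\cup S$, where $q=p$, or inside $B\cup S$, where $q=\rho\circ p$ (here I use that $\rho$ fixes $p(S)$ pointwise, as $p(S)\subseteq H$); since $\rho$ is an isometry, edge lengths are preserved in both cases, and there are no edges between $A$ and $B$. Second, the $uv$-distance changes: as $q(u)=p(u)$ and $q(v)=\rho(p(v))$, the point $p(u)$ is equidistant from $p(v)$ and $\rho(p(v))$ precisely when it lies on the perpendicular bisector of the segment $[p(v),\rho(p(v))]$; because $\rho$ is the reflection across $H$ and $p(v)\notin H$, that bisector is exactly $H$, so $\|p(u)-\rho(p(v))\|=\|p(u)-p(v)\|$ would force $p(u)\in H$, contradicting the choice of $H$. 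Hence $\|q(u)-q(v)\|\ne\|p(u)-p(v)\|$, completing the argument. I expect the only delicate point to be the dimension count guaranteeing a hyperplane $H\supseteq L$ that simultaneously avoids $p(u)$ and $p(v)$; the equivalence of the folded framework and the perpendicular-bisector identity are routine.
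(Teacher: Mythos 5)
Your proof is correct, and it is exactly the standard reflection argument: the paper itself omits the proof of this lemma, citing it as the $d$-dimensional version of \cite[Lemma 5.6]{JJS}, whose proof is precisely this folding of one side of a separator of size at most $d$ across a hyperplane containing the image of the separator. Your dimension count for choosing $H$ through $\mathrm{Aff}(p(S))$ while avoiding $p(u)$ and $p(v)$, and the observation that the argument applies to an arbitrary generic realization (hence yields global looseness, the ``slightly stronger'' conclusion), are both sound.
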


Let $G_i=(V_i,E_i)$ be a graph, $t \geq 1$ an integer, and suppose that $K_i$ is a complete subgraph of $G_i$
on $t$ vertices, for $i=1,2$. Then the {\it $t$-clique sum} operation on $G_1,G_2$,
along $K_1,K_2$, creates a new graph $G$ by identifying the vertices of $K_1$ with the vertices
of $K_2$, following some bijection between their vertex sets. 
The {\it clique sum} operation is a $t$-clique sum operation for some $t\geq 1$.

In the following lemma sufficiency follows from the simple obervation that if a vertex pair is
weakly globally linked in a subgraph of $G$, then it is also weakly
globally linked in $G$. Necessity follows from the fact that the clique sum
operation is performed along a complete (and hence globally rigid) subgraph.

\begin{lemma}
\label{cliquesum}
Suppose that $G$ is the clique sum of $G_1$ and $G_2$ and let $u,v\in V(G_1)$.
Then $\{u,v\}$ is weakly globally linked in $G$ in $\R^d$ if and only if
$\{u,v\}$ is weakly globally linked in $G_1$ in $\R^d$.
\end{lemma}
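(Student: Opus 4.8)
<br>

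The plan is to prove both directions of the biconditional in Lemma~\ref{cliquesum} separately, relying on the two observations already indicated in the paper. Throughout, let $t\geq 1$ be the integer for which $G$ is the $t$-clique sum of $G_1$ and $G_2$ along complete subgraphs $K_1, K_2$, and let $K$ denote the common complete subgraph on $t$ vertices obtained by identifying $K_1$ with $K_2$ in $G$. Since $u,v\in V(G_1)$, we regard $G_1$ as a subgraph of $G$.

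For the sufficiency direction, suppose $\{u,v\}$ is weakly globally linked in $G_1$ in $\R^d$. First I would record the general principle that if $H$ is a subgraph of a graph $H'$ on the same vertex set as the relevant vertices, then weak global linkedness of a pair in $H$ implies weak global linkedness in $H'$: given a generic realization $(H,p)$ in which $\{u,v\}$ is globally linked, I extend $p$ to a generic realization $(H',p')$ of $H'$ (placing the additional vertices generically and keeping the coordinates algebraically independent), and observe that any framework $(H',q')$ equivalent to $(H',p')$ restricts to a framework $(H,q')$ equivalent to $(H,p)$ on the edges of $H$; hence $\|p'(u)-p'(v)\|=\|q'(u)-q'(v)\|$ follows from global linkedness in $(H,p)$. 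Applying this with $H=G_1$ and $H'=G$ yields that $\{u,v\}$ is weakly globally linked in $G$. The mild technical point to be careful about here is that the extension $p'$ must remain generic and must agree with a generic realization of $G_1$ on $V(G_1)$; this is standard, since one can first fix a generic realization of all of $G$ and restrict it to $G_1$.

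For the necessity direction, suppose $\{u,v\}$ is weakly globally linked in $G$ in $\R^d$, and let $(G,p)$ be a generic realization witnessing this. The key structural fact is that $K$ is a complete subgraph, so $(K,p)$ is globally rigid, and therefore the positions $p(w)$ for $w\in V(K)$ are rigidly tied to one another across all equivalent realizations. Restricting $p$ to $V(G_1)$ gives a generic framework $(G_1,p_1)$. I claim $\{u,v\}$ is globally linked in $(G_1,p_1)$. Take any framework $(G_1,q_1)$ equivalent to $(G_1,p_1)$. Because $K$ is complete, the edge lengths within $K$ determine the pairwise distances among the vertices of $K$ uniquely up to congruence, so $q_1$ restricted to $V(K)$ is congruent to $p_1$ restricted to $V(K)$; after applying a suitable isometry of $\R^d$ I may assume $q_1$ agrees with $p_1$ on $V(K)$. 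Now I would glue $q_1$ to the original realization of $G_2$ along $K$: define a framework $(G,q)$ by setting $q=q_1$ on $V(G_1)$ and $q=p$ on $V(G_2)$, which is well-defined precisely because the two agree on the shared clique $V(K)$. This $(G,q)$ is equivalent to $(G,p)$, since every edge of $G$ lies in $G_1$ or in $G_2$ and edge lengths are preserved in each part. Global linkedness of $\{u,v\}$ in $(G,p)$ then gives $\|p(u)-p(v)\|=\|q(u)-q(v)\|=\|q_1(u)-q_1(v)\|$, which is exactly what is needed.

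The main obstacle to watch is the gluing step in the necessity argument: I must ensure that after aligning $q_1$ with $p_1$ on $V(K)$ by an isometry, the combined map $q$ is a genuine realization with all shared-vertex positions consistent, and that no edge outside $G_1\cup G_2$ exists (which holds because the clique-sum operation creates no new edges beyond those of $G_1$ and $G_2$). A secondary subtlety is genericity: $(G_1,p_1)$ is generic because it is the restriction of the generic $(G,p)$, so the notion ``globally linked in a generic framework'' applies directly and the existence of one such witnessing framework is all that the definition of weak global linkedness requires.
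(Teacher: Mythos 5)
Your proof is correct and follows exactly the route the paper indicates: monotonicity of weak global linkedness under passing to a supergraph (extend a witnessing generic realization of $G_1$ generically to $G$) for one direction, and for the other, gluing an arbitrary equivalent realization of $G_1$ back onto the fixed realization of $G_2$ after aligning along the complete subgraph $K$, whose completeness forces congruence of the restricted configurations. The paper supplies only a two-sentence sketch before the lemma statement, and your writeup is a faithful, correct expansion of precisely that argument.
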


\section{A sufficient condition for weak global linkedness in $\R^d$}
\label{sec:suff}

In this section we provide a new sufficient condition 
for the weak global linkedness of a pair of vertices of 
a $(d+1)$-connected graph in $\R^d$.
An important ingredient in our proof is a
geometric lemma (Lemma \ref{lem gllaza1}) presented in the next subsection. In Subsection \ref{sec:suffcond} we prove the aforementioned sufficient condition and in Subsection \ref{sec:newproof} we show how it can be used to prove the sufficiency part of Theorem \ref{thm}. 
In the last subsection 
we shall see that an appropriate reverse of Lemma \ref{lem gllaza1} is also true (Lemma \ref{lem gllaza2}).
This lemma will be used in the next section where we  characterize weak global linkedness in two dimensions.
Roughly speaking these two lemmas show that if a vertex pair $\{u,v\}$ 
belongs to a rigid subgraph $H$ of $G$, then 
the contraction of a
connected subgraph of $G-V(H)$ 
does not change the weak global linkedness properties of $\{u,v\}$.

\subsection{The first contraction lemma}

A basic graph operation is the {\it contraction} of a subset $V_0$ of $V$
in the graph $G=(V,E)$. 
This operation, which is denoted by $G/V_0$, identifies the vertices of $V_0$ and removes
the loops and parallel copies of the edges of the resulting graph that it may create. 
The contraction of an edge $e=xy$ is the contraction of the set $\{x,y\}$ and it is denoted by $G/e$.

\begin{lemma}
\label{lem gllaza1}
Let $G=(V,E)$ be a graph, $u,v\in V$, and suppose that $G[V_0]$ is a $(u,v)$-rigid subgraph of $G$.
Let 
$e=(s_1,s_2)\in E-E(G[V_0])$ be an edge.
If $\{u,v\}$ is weakly globally linked in $G/e$ in $\R^d$, then $\{u,v\}$ is weakly globally linked in $G$ in $\R^d$.
\end{lemma}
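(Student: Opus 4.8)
The plan is to build, from a generic globally linked realization of $G/e$, a generic realization of $G$ in which $\{u,v\}$ is still globally linked, by separating the two endpoints of the contracted edge by a small amount $\varepsilon$ and then letting $\varepsilon\to 0$. First I would dispose of the easy reductions. If $uv\in E$ the pair is globally linked in every realization, so assume $uv\notin E$. Since the weak global linkedness of $\{u,v\}$ depends only on the connected component containing $u$ and $v$, and since $u,v$ lie in the connected rigid subgraph $G[V_0]$, I may assume that $G$ is connected; then $G/e$ is connected as well. This matters because in a connected framework all edge lengths are fixed along equivalent realizations, so after pinning one vertex every equivalent realization is confined to a bounded region, and it is this compactness that will drive the limiting argument. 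Finally, as $G[V_0]$ is induced and $e\notin E(G[V_0])$, at least one endpoint of $e$ lies outside $V_0$; relabelling if necessary I assume $s_2\notin V_0$, so that $u,v\neq s_2$. The image of $V_0$ in $G/e$ is again $(u,v)$-rigid, since contraction can only add edges within it.

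Next comes the construction. Let $(G/e,p')$ be a generic framework in which $\{u,v\}$ is globally linked, and let $w$ be the vertex arising from the contraction of $e$. For small $\varepsilon>0$ I would take a realization $(G,p_\varepsilon)$ that agrees with $p'$ on every vertex other than $s_2$, places $s_1$ at $p'(w)$, and places $s_2$ at distance $\varepsilon$ from $p'(w)$, with the new coordinates chosen so that $(G,p_\varepsilon)$ is generic. The aim is to show that $\{u,v\}$ is globally linked in $(G,p_\varepsilon)$ once $\varepsilon$ is small. Given an equivalent framework $(G,q)$, the edge $e$ still has length $\varepsilon$, so collapsing $s_2$ onto $s_1$ produces a framework on $G/e$ whose edge lengths differ from those of $(G/e,p')$ by $O(\varepsilon)$.

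Because the image of $V_0$ is $(u,v)$-rigid, Proposition \ref{finiteequi} confines the $u$-$v$ distance, over all frameworks equivalent to the generic rigid subframework of $(G/e,p')$ induced on this image, to a finite set, with the correct value $\delta_0=\|p'(u)-p'(v)\|$ among them. Letting $\varepsilon\to 0$ along a sequence and using the boundedness noted above, the collapsed frameworks subconverge to a framework exactly equivalent to $(G/e,p')$; by global linkedness in $G/e$ its $u$-$v$ distance is $\delta_0$, whence $\|q(u)-q(v)\|\to\delta_0$.

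The main obstacle is to upgrade this limiting statement to the exact equality $\|q(u)-q(v)\|=\|p_\varepsilon(u)-p_\varepsilon(v)\|$ required for global linkedness at a fixed small $\varepsilon$. The danger is a branch of equivalent realizations of the rigid subgraph whose $u$-$v$ distance merely converges to $\delta_0$ while differing from the correct value for every $\varepsilon>0$. Here I would exploit the genericity of $p'$: a coincidence of two branch distances at a generic configuration cannot be accidental, so it must hold identically and therefore arise from a distance preserving symmetry of the configuration, namely a partial reflection in a hyperplane through $u$ and $v$. The point of contracting $e$ is precisely that $s_1$ and $s_2$ become the single vertex $w$, so any such reflection keeps both endpoints of $e$ on the same side of the hyperplane, and hence lifts to a symmetry of $G$ that still preserves both the length of $e$ and the distance $\|q(u)-q(v)\|$. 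Consequently no realization equivalent to $(G,p_\varepsilon)$ can realize a $u$-$v$ distance other than $\|p_\varepsilon(u)-p_\varepsilon(v)\|$, so $\{u,v\}$ is globally linked in the generic framework $(G,p_\varepsilon)$, and therefore weakly globally linked in $G$. Making this last reflection and genericity step rigorous is where I expect the real work to lie.
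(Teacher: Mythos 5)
Your construction is the same as the paper's: place $s_1$ at the contracted point, put $s_2$ at distance $\varepsilon$ from it, and let $\varepsilon\to 0$, using compactness to collapse a putative sequence of bad equivalent realizations onto an equivalent realization of $(G/e,p')$. Up to the sentence ``whence $\|q(u)-q(v)\|\to\delta_0$'' you have reproduced the paper's argument. The gap is in what you do next. The ``main obstacle'' you identify --- a branch whose $u$--$v$ distance converges to $\delta_0$ without ever equalling it --- does not exist, and the ingredient that kills it is already in your write-up: since $s_2\notin V_0$, the restriction of $p_\varepsilon$ to $V_0$ is the \emph{same} configuration for every $\varepsilon$, so by Proposition \ref{finiteequi} the set $D$ of $u$--$v$ distances achievable by frameworks equivalent to $(G[V_0],p_\varepsilon|_{V_0})$ is one fixed finite set, independent of $\varepsilon$. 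Every $(G,q)$ equivalent to $(G,p_\varepsilon)$ restricts to such a framework on $V_0$, so $\|q(u)-q(v)\|\in D$ always. A quantity confined to a fixed finite set that converges to $\delta_0$ is eventually equal to $\delta_0$; equivalently, any wrong value is at least $\min\{|d-\delta_0|:d\in D,\ d\neq\delta_0\}>0$ away from $\delta_0$, uniformly in $\varepsilon$. This is exactly how the paper closes the argument (it runs the same reasoning by contradiction, fixing this $\epsilon$ first and then extracting the convergent subsequence).

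The replacement step you propose instead --- that a coincidence of branch distances at a generic configuration must come from a partial reflection in a hyperplane through $u$ and $v$, which then lifts to $G$ --- is not a valid deduction. Equivalent realizations of a generic framework need not be generated by partial reflections; whether globally linked pairs can be characterized via partial reflections is essentially an open conjecture in this area (cf.\ the paper's reference to Garamv\"olgyi and Jord\'an on partial reflections), so resting the proof on it is at best circular. You should delete that paragraph and finish with the finite-set/uniform-gap observation above; with that change your proof coincides with the paper's.
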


\begin{proof}
We may assume that $G$ is connected and $s_2\notin V_0$. 
Let $s$ denote the vertex of $G/e$ obtained by identifying $s_1$ and $s_2$ in $G$. 
Note that we may have $s_1\in V_0$.
In this case we shall
simply identify $s$ with $s_1$
for notational convenience.
Let $(G/e,p)$ be a generic realization of $G/e$ in which 
$\{u,v\}$ is globally linked.
Let $(G,p_i)$ be a sequence of generic realizations of $G$, for which
$p_i|_{V-s_1-s_2}=p|_{V-s}$, $p_i(s_1)=p(s)$, and $p_i(s_2)\to p(s)$. 
Suppose, for a contradiction, that $\{u,v\}$ is globally loose in $G$. Then $\{u,v\}$ is not globally linked in $(G,p_i)$ for all $i\geq 1$. Hence for all $i\geq 1$ there exists
a realization $(G,p_i')$, equivalent to $(G,p_i)$, for which
$$||p_i'(u)-p_i'(v)||\neq ||p_i(u)-p_i(v)||=||p(u)-p(v)||.$$ 
Since
$G[V_0]$ is rigid and $p|_{V_0}=p_i|_{V_0}$, it follows from Proposition \ref{finiteequi} that there is an
$\epsilon >0$ such that for all $i\geq 1$,
$$\big|||p_i'(u)-p_i'(v)||-||p(u)-p(v)||\big|\geq \epsilon.$$

Since $G$ is connected, we can translate each framework, if necessary, so that  for all $i\geq 1$,
$(G,p_i')$ is in the interior of a ball of radius $K$, centered at the origin, for some fixed 
positive real number $K$. Thus 
there is a convergent subsequence $p_{i_k}'\to p'$. Since $(s_1,s_2)\in E$, we must have $p'(s_1)=p'(s_2)$.
By extending $p'|_{V-s_1-s_2}$ with $p'(s)=p'(s_1)$, we obtain a realization $(G/e,p')$ which is equivalent to $(G/e,p)$. Furthermore, we have 
$$\big|||p'(u)-p'(v)||-||p(u)-p(v)||\big|\geq \epsilon,$$ 
which contradicts the fact that $\{u,v\}$ is globally linked in $(G/e,p)$.
Thus $\{u,v\}$ is weakly globally linked in $G$.
\end{proof}

We obtain the following sufficient (but not necessary, see Figure \ref{fig:gen4}) condition for weak global linkedness as a corollary. 

\begin{corollary}
\label{coro}
Let $G=(V,E)$ be a graph, $u,v\in V$. Suppose that there is some $V_0\subset V$ such that $G[V_0]$ is a $(u,v)$-rigid
subgraph of $G$ in $\R^d$, and
there is a $uv$-path in $G$ that is internally disjoint from $V_0$. Then
$\{u,v\}$ is weakly globally linked in $G$ in $\R^d$.
\end{corollary}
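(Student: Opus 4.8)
The plan is to deduce Corollary \ref{coro} from Lemma \ref{lem gllaza1} by a contraction argument. Let $P$ be the given $uv$-path that is internally disjoint from $V_0$, and write $P = u w_1 w_2 \cdots w_k v$, where the internal vertices $w_1,\dots,w_k$ lie outside $V_0$. The key observation is that contracting the internal portion of $P$ reduces the situation to one where $u$ and $v$ are joined by an edge inside (or adjacent to) a rigid subgraph, which is visibly weakly globally linked; Lemma \ref{lem gllaza1} then lets us ``un-contract'' the path one edge at a time while preserving weak global linkedness.

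First I would handle the easy boundary cases. If $P$ has no internal vertices, then $uv\in E$ and $\{u,v\}$ is trivially weakly globally linked (indeed globally linked in any generic framework). So I assume $k\geq 1$. The main step is an induction on the number $k$ of internal vertices of $P$. For the inductive step, consider the edge $e = w_k v$ on the path (the last edge before reaching $v$), and note that $e\notin E(G[V_0])$ since $w_k\notin V_0$. Contracting $e$ produces $G/e$, in which the images of $V_0$ still induce a $(u,v)$-rigid subgraph (the vertex set $V_0$ is untouched because neither $w_k$ nor $v$ was identified with a vertex of $V_0$ other than possibly $v$ itself being the image), and in $G/e$ the path $P$ becomes a shorter $uv$-path $u w_1 \cdots w_{k-1} v'$ with one fewer internal vertex, still internally disjoint from $V_0$. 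By the induction hypothesis, $\{u,v\}$ is weakly globally linked in $G/e$. Lemma \ref{lem gllaza1}, applied with this same $V_0$ and edge $e$, then yields that $\{u,v\}$ is weakly globally linked in $G$, completing the induction.

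The base case $k=1$ is where the argument bottoms out cleanly: here $P = u w_1 v$, and contracting the edge $w_1 v$ gives a graph $G/(w_1 v)$ in which $u$ and $v$ are now adjacent (via the edge that was $u w_1$), so $\{u,v\}$ is an edge and hence weakly globally linked there; Lemma \ref{lem gllaza1} pushes this back to $G$. Alternatively, one can collapse the entire internal path at once: contracting the connected subgraph on $\{w_1,\dots,w_k\}$ merges it to a single vertex adjacent to both $u$ and $v$, and iterating Lemma \ref{lem gllaza1} along the contracted edges reduces to a single application. I expect the induction to be routine; the one point requiring care is verifying that $G[V_0]$ remains $(u,v)$-rigid in each contracted graph $G/e$, i.e.\ that the rigid subgraph is preserved under the contractions. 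This holds because every contracted edge has an endpoint among the internal path vertices $w_1,\dots,w_k$, which are disjoint from $V_0$, so the induced subgraph on (the image of) $V_0$ is unchanged, and rigidity of $G[V_0]$ is inherited directly. This bookkeeping is the only genuine obstacle, and it is minor.
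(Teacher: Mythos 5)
Your proof is correct and is exactly the argument the paper intends (the paper states this as an immediate corollary of Lemma \ref{lem gllaza1} without writing out the details): contract the path onto $v$ one edge at a time, noting that each contracted edge has an endpoint among the internal vertices and hence lies outside $E(G[V_0])$, so $V_0$ survives each contraction as a $(u,v)$-rigid induced subgraph, and the process terminates when $u$ and $v$ become adjacent. The one point you flag as needing care -- that the lemma permits one endpoint of the contracted edge (here $v$) to lie in $V_0$ -- is indeed handled explicitly in the paper's proof of Lemma \ref{lem gllaza1}, so the induction goes through as you describe.
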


\begin{figure}[ht]
\centering
\includegraphics[scale=0.9]{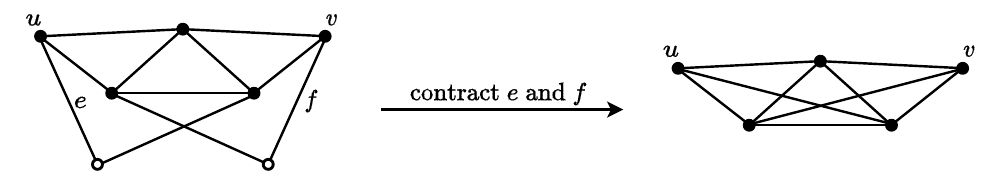}
\caption{Consider the graph on the left.  The subgraph induced by the solid vertices is rigid in $\R^2$, and contracting the edges $e$ and $f$ results in a graph that is globally rigid in $\R^2$. Thus by Lemma \ref{lem gllaza1}, $\{u,v\}$ is weakly globally linked in $\R^2$. This shows that the sufficient condition of Corollary \ref{coro} is not necessary.}
\label{fig:gen4}
\end{figure}

Corollary \ref{coro}, together with Lemma \ref{J}, leads to short proofs for some previous results on
globally rigid graphs. We illustrate this by the following theorem.

\begin{theorem} \cite{Conmerge}
Let $G_1$ and $G_2$ be two globally rigid graphs in $\R^d$ on at least $d+2$ vertices,
with exactly $d+1$ vertices in common. Suppose that $e$ is a common edge. Then $G=G_1\cup G_2-e$
is globally rigid in $\R^d$.
\end{theorem}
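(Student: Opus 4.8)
The plan is to apply Corollary \ref{coro} together with Lemma \ref{J}(a). The key observation is that $G_1$ and $G_2$ each have at least $d+2$ vertices and share exactly $d+1$ vertices, so each of $G_1$ and $G_2$ contains at least one vertex \emph{not} in the shared set $W=V(G_1)\cap V(G_2)$. Since $e$ is a common edge, both its endpoints lie in $W$. Write $e=xy$.

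\medskip

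First I would argue that $\{x,y\}$ is weakly globally linked in $G=G_1\cup G_2-e$. The shared vertex set $W$ induces a complete graph on $d+1$ vertices in both $G_1$ and $G_2$ (indeed, $G_i$ is globally rigid, hence $(d+1)$-connected by Theorem \ref{thm:hend}, and the standard merging hypothesis is that $W$ is a clique; in any case $G_1$ restricted to $W$ contains the edge $e$ together with enough structure so that some $(x,y)$-rigid subgraph survives). More usefully, $G_1-e$ still contains a $(x,y)$-rigid subgraph: since $G_1$ is globally rigid on at least $d+2$ vertices it is redundantly rigid in $\R^d$ by Theorem \ref{thm:hend}, so $G_1-e$ is rigid, giving an $(x,y)$-rigid induced subgraph $H_1$ inside $G_1-e\subseteq G$. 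Second, I would produce an $xy$-path in $G$ internally disjoint from $V(H_1)$ by routing it through the side $G_2$: because $G_2$ has a vertex outside $W$ and is $(d+1)$-connected, deleting the at most $d-1$ internal vertices of $W\setminus\{x,y\}$ still leaves an $xy$-path, and this path uses vertices of $G_2$ that avoid the interior of the $G_1$-side subgraph $H_1$. Applying Corollary \ref{coro} with $V_0=V(H_1)$ then yields that $\{x,y\}$ is weakly globally linked in $G$, i.e.\ $xy\in J_d(G)$.

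\medskip

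Finally I would close the argument with Lemma \ref{J}(a). By construction $G+xy = G + e = G_1\cup G_2$, and $G_1\cup G_2$ is globally rigid: it is the union of two globally rigid graphs glued along a globally rigid ($d+1$-clique) subgraph, which is globally rigid by the clique-sum/gluing principle for global rigidity (alternatively this is exactly the classical merging theorem for the case \emph{with} the common edge present). Thus $G+J_d(G)\supseteq G+xy$ is globally rigid, so $G+J_d(G)+F$ is globally rigid for $F=\emptyset$, and Lemma \ref{J}(a) (equivalently Lemma \ref{J}(b) applied to the weakly globally linked pair $\{x,y\}$) gives that $G=G_1\cup G_2-e$ is globally rigid in $\R^d$.

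\medskip

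The main obstacle I anticipate is the second step: verifying that an $xy$-path through the $G_2$-side can be chosen internally disjoint from the chosen $(x,y)$-rigid subgraph $V_0$ on the $G_1$-side. The delicate point is that $V_0$ may reuse shared vertices of $W$, so I must be careful to select $H_1$ and the connecting path so their vertex sets meet only in $\{x,y\}$; the $(d+1)$-connectivity of $G_2$ and the existence of a private vertex of $G_2$ outside $W$ are exactly what guarantee enough room to route the path while the clique structure on $W$ guarantees $H_1$ exists after deleting $e$. Establishing global rigidity of $G_1\cup G_2$ itself is routine given the hypotheses, so the crux is the clean application of Corollary \ref{coro}.
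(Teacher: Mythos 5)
Your proposal is correct and follows essentially the same route as the paper's own proof: use Theorem \ref{thm:hend} to get that $G_1-e$ is rigid, use the $(d+1)$-connectivity of $G_2$ to route an $xy$-path internally disjoint from $V(G_1)$, apply Corollary \ref{coro}, and finish with Lemma \ref{J}. The only difference is that the paper simply takes $V_0=V(G_1)$, which dissolves the "delicate point" you worry about at the end, since any path whose internal vertices lie in $V(G_2)\setminus W$ automatically avoids all of $V(G_1)$ except $x$ and $y$.
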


\begin{proof}
Let $e=uv$. Theorem \ref{thm:hend} implies that $G_1-e$ is rigid.
Since $G_2$ is $(d+1)$-connected, there is a path from $u$ to $v$ in $G$ that is internally disjoint
from $G_1$. Thus $\{u,v\}$ is weakly globally linked in $G$ by Corollary \ref{coro}.
It is easy to see that $G+uv$ is globally rigid. Hence $G$ is also globally rigid by Lemma \ref{J}.
\end{proof}

By using the same proof idea we obtain a simple proof of the “rooted minor" theorem of Tanigawa \cite{Tani}.

\subsection{The sufficient condition}\label{sec:suffcond}

Let $G=(V,E)$ be a graph, $\emptyset\not= X\subseteq V$, and let $V_1,V_2,\dots, V_r$ be the
vertex sets of the connected components of $G-X$.
The graph ${\rm Con}(G,X)$ is obtained from $G$ by contracting each vertex set $V_i$ into a single vertex $v_i$, $1\leq i\leq r$.
The graph ${\rm Clique}(G,X)$ is obtained from $G$ by deleting the vertex sets $V_i$, $1\leq i\leq r$, and adding
a new edge $xy$ for all pairs $x,y\in N_G(V_i)$, $xy\notin E$, for $1\leq i\leq r$.
See Figure \ref{fig:gen6}.

\begin{figure}[ht]
\centering
\includegraphics[scale=0.95]{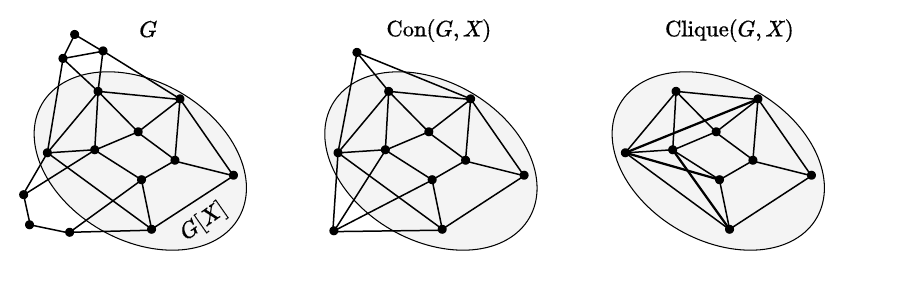}
\vspace{-0.7cm}
\caption{An example of a graph $G$ and the graphs ${\rm Con}(G,X)$ and ${\rm Clique}(G,X)$.}
\label{fig:gen6}
\end{figure}

\begin{lemma}
\label{cceq}
Let  $G=(V,E)$ be a $(d+1)$-connected graph.
Suppose that $G[V_0]$ is a rigid subgraph of $G$ for some $V_0\subseteq V$. 
Then ${\rm Clique}(G,V_0)$ is globally rigid in $\R^d$
if and only if ${\rm Con}(G,V_0)$ is globally rigid in $\R^d$.
\end{lemma}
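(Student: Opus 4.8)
The plan is to prove the equivalence by analyzing how the two constructions differ and exploiting the fact that the contracted pieces attach to the ``core'' along their neighborhoods. Write $V_1,\dots,V_r$ for the vertex sets of the connected components of $G-V_0$. Both ${\rm Con}(G,V_0)$ and ${\rm Clique}(G,V_0)$ agree on the induced subgraph $G[V_0]$; they differ only in how each component $V_i$ is represented: in ${\rm Con}$ the whole component is collapsed to a single vertex $v_i$ adjacent to $N_G(V_i)\subseteq V_0$, whereas in ${\rm Clique}$ the component is deleted and replaced by a clique on $N_G(V_i)$. Since $G$ is $(d+1)$-connected and each $V_i$ is a connected component of $G-V_0$, the neighborhood $N_G(V_i)$ is a cutset and hence has size at least $d+1$; this is the hypothesis I expect to be essential, because it guarantees the ``clique'' replacing a component is a genuine globally rigid substitute.

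First I would treat one component at a time and reduce to understanding a single substitution, so it suffices to compare, for a fixed $i$, the effect of (i) adding a new vertex $v_i$ joined to all of $N_G(V_i)$ versus (ii) adding all missing edges among $N_G(V_i)$ to form a clique. The key observation is that the star $K_{1,|N_G(V_i)|}$ on $v_i\cup N_G(V_i)$ together with the clique on $N_G(V_i)$ form globally rigid graphs sharing the same $d+1$ (or more) vertices, and that $G[V_0]$ is rigid. For the direction assuming ${\rm Clique}(G,V_0)$ is globally rigid, I would show that contracting $V_i$ to $v_i$ can be recovered by Corollary~\ref{coro} and Lemma~\ref{J}: the pair of vertices inside $N_G(V_i)$ are weakly globally linked because $G[V_0]$ supplies a $(u,v)$-rigid subgraph and the component $V_i$ (or rather the contracted vertex $v_i$) supplies an internally disjoint path, so the clique edges lie in $J_d$ of the contracted graph and can be added back without affecting global rigidity. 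Conversely, assuming ${\rm Con}(G,V_0)$ is globally rigid, each single contracted vertex $v_i$ has degree $|N_G(V_i)|\geq d+1$, and I would argue that replacing the star at $v_i$ by the clique on $N_G(V_i)$ preserves global rigidity, again invoking that the neighborhood cliques are weakly globally linked pairs via Corollary~\ref{coro} and then applying Lemma~\ref{J}(a) to transfer global rigidity across $J_d$.

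The technical heart, and the step I expect to be the main obstacle, is verifying that each missing edge $xy$ with $x,y\in N_G(V_i)$ really is a weakly globally linked pair in the relevant graph: I must exhibit a $(u,v)$-rigid subgraph containing $\{x,y\}$ and an internally disjoint $xy$-path. The rigid subgraph $G[V_0]$ does contain $x,y$ (they lie in $V_0$), so the $(x,y)$-rigidity hypothesis of Corollary~\ref{coro} is met by $V_0$ itself; the internally disjoint path is furnished by routing through the component $V_i$ (in ${\rm Con}$ through the single vertex $v_i$), which is internally disjoint from $V_0$ by construction. Care is needed to handle all components simultaneously and to make sure that after adding back one set of clique edges the hypotheses still hold for the next; I would manage this by adding all of $J_d$ at once and appealing to Lemma~\ref{J}(a), which is exactly designed to let us add the entire set of weakly globally linked non-edges without losing global rigidity.

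<br>

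A subtle point to watch is the degenerate case $|N_G(V_i)|=d+1$ in ${\rm Con}$, where $v_i$ is a degree-$(d+1)$ vertex: one must confirm that such a vertex together with its globally rigid neighborhood clique behaves as expected, and that the standard fact ``adding a vertex of degree $d$ preserves rigidity'' (and its global analogue through the neighborhood being a globally rigid clique of size $\geq d+1$) gives the clean correspondence between the star picture and the clique picture. Once the single-component substitution is established in both directions, the general statement follows by iterating over $i=1,\dots,r$.
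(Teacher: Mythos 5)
Your proposal is correct and follows essentially the same route as the paper: the paper forms the single intermediate graph $H={\rm Con}(G,V_0)+E'$ (where $E'$ is the set of clique edges), shows each edge of $E'$ is weakly globally linked in ${\rm Con}(G,V_0)$ via Corollary~\ref{coro} exactly as you describe ($G[V_0]$ as the rigid subgraph, the path through $v_i$ as the internally disjoint path), applies Lemma~\ref{J} to identify the global rigidity of $H$ and ${\rm Con}(G,V_0)$, and then observes that $H$ arises from ${\rm Clique}(G,V_0)$ by attaching vertices to cliques of size at least $d+1$. Your component-by-component bookkeeping is unnecessary but harmless, since (as you note at the end) one can add all of $E'\subseteq J_d({\rm Con}(G,V_0))$ at once.
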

\begin{proof}
Let $E'$ be the set of those edges in ${\rm Clique}(G,V_0)$ that are not in $G[V_0]$. Let $H={\rm Con}(G,V_0)+E'$. It follows from 
Corollary \ref{coro} that $\{u,v\}$ is weakly globally linked in ${\rm Con}(G,V_0)$ for all $uv\in E'$. Hence, by Lemma \ref{J}, ${\rm Con}(G,V_0)$ is globally rigid if and only if $H$ is globally rigid. $H$ can be obtained from ${\rm Clique}(G,V_0)$ by adding new vertices and joining them to cliques of size at least $d+1$. Thus $H$ is globally rigid if and only if ${\rm Clique}(G,V_0)$ is globally rigid.
\end{proof}

We are ready to state the main result of this section.

\begin{theorem}
\label{globlaza jellemzesd}
Let 
$G=(V,E)$ be a $(d+1)$-connected graph and $u,v\in V$.
Suppose that $G[V_0]$ is a $(u,v)$-rigid subgraph of $G$ in $\R^d$. If ${\rm Clique}(G,V_0)$ is globally rigid in $\R^d$,
then $\{u,v\}$ is weakly globally linked in $G$ in $\R^d$.
\end{theorem}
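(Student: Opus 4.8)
The plan is to reduce the statement of Theorem \ref{globlaza jellemzesd} to the equivalence established in Lemma \ref{cceq}, and then deploy the first contraction lemma (Lemma \ref{lem gllaza1}) to transfer the global linkedness of $\{u,v\}$ from the contracted graph back to $G$. The key observation is that $\{u,v\} \subseteq V_0$ since $G[V_0]$ is $(u,v)$-rigid, so $u$ and $v$ survive in both ${\rm Con}(G,V_0)$ and ${\rm Clique}(G,V_0)$ as genuine vertices. First I would apply Lemma \ref{cceq}: since $G$ is $(d+1)$-connected and $G[V_0]$ is rigid, the hypothesis that ${\rm Clique}(G,V_0)$ is globally rigid yields that ${\rm Con}(G,V_0)$ is globally rigid in $\R^d$ as well.

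Next I would exploit the fact that global rigidity of ${\rm Con}(G,V_0)$ makes every vertex pair of that graph weakly globally linked (this is the easy direction of Lemma \ref{J}(c)); in particular $\{u,v\}$ is weakly globally linked in ${\rm Con}(G,V_0)$. The remaining task is to climb back from ${\rm Con}(G,V_0)$ to $G$. Recall that ${\rm Con}(G,V_0)$ is obtained from $G$ by contracting each connected component $V_i$ of $G - V_0$ to a single vertex $v_i$. I would perform this reversal one contraction at a time, or more precisely one edge at a time within each component, so that each intermediate step is exactly a single edge contraction to which Lemma \ref{lem gllaza1} applies. The engine is this: throughout the whole process the subgraph $G[V_0]$ stays intact and remains $(u,v)$-rigid, which is precisely the standing hypothesis of Lemma \ref{lem gllaza1}, and each edge being un-contracted lies outside $E(G[V_0])$ because it lives inside (or on the boundary of) one of the deleted components $V_i$.

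The main obstacle I anticipate is setting up this reversal cleanly, since Lemma \ref{lem gllaza1} is phrased as a statement about undoing the contraction of a \emph{single} edge, whereas ${\rm Con}(G,V_0)$ contracts entire connected vertex sets $V_i$. The clean way to handle this is to fix a spanning tree $T_i$ of each component $G[V_i]$ and observe that contracting $V_i$ is the same as contracting the edges of $T_i$ successively; undoing the contractions in reverse order then splits each single step into an edge contraction of the required form. At every intermediate graph $G'$ along this sequence we have that $G'[V_0]$ is still a $(u,v)$-rigid subgraph (it is unchanged), and the edge $e$ whose contraction we are reversing satisfies $e \notin E(G'[V_0])$; thus Lemma \ref{lem gllaza1} gives that if $\{u,v\}$ is weakly globally linked in $G'/e$ then it is weakly globally linked in $G'$. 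Applying this inductively from ${\rm Con}(G,V_0)$ back up to $G$ preserves weak global linkedness of $\{u,v\}$ at every step, and we conclude that $\{u,v\}$ is weakly globally linked in $G$ in $\R^d$, as required. A small point to verify is that the edges recording the connections between distinct components and $V_0$ are never themselves contracted (only intra-component tree edges are), so the hypothesis $e \in E - E(G[V_0])$ of the contraction lemma is met at each stage.
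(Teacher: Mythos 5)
Your proposal is correct and follows essentially the same route as the paper: apply Lemma \ref{cceq} to pass from ${\rm Clique}(G,V_0)$ to ${\rm Con}(G,V_0)$, note that global rigidity gives weak global linkedness of $\{u,v\}$ there, and then undo the contractions via repeated applications of Lemma \ref{lem gllaza1}. The paper states the final step in one line (``${\rm Con}(G,V_0)$ can be obtained from $G$ by contracting edges not induced by $V_0$''), and your spanning-tree bookkeeping is just a careful spelling-out of that same induction.
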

\begin{proof}
Suppose that ${\rm Clique}(G,V_0)$ is globally rigid in $\R^d$. Then so is ${\rm Con}(G,V_0)$ by Lemma \ref{cceq}. In particular,
$\{u,v\}$ is weakly globally linked in ${\rm Con}(G,V_0)$.
Since ${\rm Con}(G,V_0)$
can be obtained from $G$ by contracting edges not induced by $V_0$, Lemma
\ref{lem gllaza1} gives that $\{u,v\}$ is weakly globally linked in $G$.
\end{proof}

\subsection{Globally rigid graphs - a new proof}\label{sec:newproof}

Theorem \ref{globlaza jellemzesd} and Lemma \ref{J} lead to a new short proof of the sufficiency
part of Theorem \ref{thm}, which only uses 
the simple combinatorial Lemmas \ref{rrmc} and \ref{rrtri} and
 the fact that the global rigidity
of graphs in $\R^2$ is a generic property.
The original proof in \cite{JJconnrig} relies on an inductive construction of $3$-connected
${\cal R}_2$-connected graphs.

\begin{proof} (of sufficiency in {\bf Theorem \ref{thm}})
The proof is by induction on $|V|$. If $|V|=4$ then $G$ is a complete graph on four
vertices, which is globally rigid. So we may suppose that $|V|\geq 5$.
First, we show that for all non-adjacent pairs $u,v$, there is a $(u,v)$-rigid proper induced subgraph $G[X]$ 
of $G$. To see this consider two edges $e,f\in E$ incident with $u$ and $v$, respectively.
Since $G$ is 
$3$-connected and redundantly rigid, it is
${\cal R}_2$-connected by Lemma \ref{rrmc}.
Hence there is an ${\cal R}_2$-circuit $C$ in $G$ with $e,f\in E(C)$.
Since $|E(C)|=2|V|-2$ and $d_C(v)\geq 3$ for all $v\in V(C)$, it follows that 
$C$ has at least four vertices of degree three. Thus 
there is a
vertex $w\in V(C)$ with $w\notin \{u,v\}$ and $d_C(w)=3$. Now $X=V(C)-w$ induces the desired
$(u,v)$-rigid subgraph.

In the rest of the proof we show that every non-adjacent vertex pair $\{u,v\}$ of $G$ is weakly globally
linked in $G$. The theorem will follow from this by Lemma \ref{J}(c).
Let us fix $u,v$ and consider a $(u,v)$-rigid proper induced subgraph $G[X]$ of $G$.
As we have shown above, such a subgraph exists.
By Theorem \ref{globlaza jellemzesd} it suffices to show that ${\rm Clique}(G,X)$ is globally
rigid. 

Let $D$ be the vertex set of a component of $G-X$ and let $H$ be obtained
from $G-D$ by adding a new edge $xy$ for each non-adjacent pair $x,y\in N_G(D)$. Since $G$ can be obtained from $H$ by attaching a graph along a complete
subgraph, and removing edges, the $3$-connectivity of $G$ implies that $H$
is $3$-connected. A similar argument shows that $H$ is rigid, and so is $H-e$
for every edge $e$ in $H$ not induced by $N_G(D)$. 
Thus if $H$ has some ${\cal R}_2$-bridges, then they are all induced by
$N_G(D)$. If $|N_G(D)|\geq 4$, then each edge induced by $N_G(D)$ belongs to
a $K_4$ in $H$, so $H$ cannot have ${\cal R}_2$-bridges at all.
If $|N_G(D)|=3$, then every
${\cal R}_2$-bridge in $H$ belongs to the same triangle, on the vertices of $N_G(D)$.
But that is impossible by Lemma \ref{rrtri}.
Therefore $H$ is a rigid graph with no ${\cal R}_2$-bridges,
and hence it is redundantly rigid.
By repeated applications of this argument we obtain that ${\rm Clique}(G,X)$ is $3$-connected and redundantly rigid. %
Since $|X|\leq |V|-1$, we can now use induction to deduce that
${\rm Clique}(G,X)$ is globally rigid.
This completes the proof.
\end{proof}

We remark that
a different proof for the sufficiency part in 
Theorem \ref{thm} was also given by Tanigawa \cite{Tani}. The high level ideas of his proof and the
proof given in this subsection are similar. By using our notation  
the main lemma \cite[Lemma 4.1]{Tani} can be stated as follows: if $v$ is a vertex of degree at least
$d+1$ in $G$,
$G-v$ is rigid in $\R^d$, and ${\rm Clique}(G,V-\{v\})$ is globally rigid in $\R^d$, then $G$ is globally rigid in $\R^d$. This statement is a 
special case of 
the “only if" direction of our Lemma \ref{cceq}.

\subsection{The second contraction lemma}\label{sec:secondlemma}

As a corollary of Lemma \ref{lem gllaza1}, it can be deduced that if $G[V_0]$ is a $(u,v)$-rigid subgraph of a graph $G$, $V_1$ is the vertex set of a component of $G-V_0$ and $\{u,v\}$ is weakly globally linked in $G/V_1$, then $\{u,v\}$ is weakly globally linked in $G$. In this subsection we shall prove the converse of this statement, see Lemma \ref{lem gllaza2} below.

We shall need some new notions and an auxiliary lemma.
A {\it configuration} of a set $U$ is a function that maps $U$ into $\R^d$. 
Two configurations $p_1,p_2$ of $U$ are said to be {\it congruent} if $||p_1(u)-p_1(v)||=||p_2(u)-p_2(v)||$ for all $u,v\in U$. Suppose that $p$ and $q$ are two incongruent configurations of a set $U$. 
We call a point $x\in \R^d$ {\it $(q,p)$-feasible} if there exists a point $y\in \R^d$ such that $||p(u)-x||=||q(u)-y||$ for all $u\in U$. We then call $y$ a {\it $(q,p)$-associate} of $x$. Observe that if $\pi$ is an isometry of $\R^d$, then the set of $(q,p)$-feasible points is equal to the set of $(\pi \circ q, p)$-feasible points.
The affine hull of set $X\subseteq \R^d$ will be denoted by Aff$(X)$.

\begin{lemma}
\label{not feasible point}
Let $p$ be a configuration of a set $U$. Suppose that $Q=\{q_1,\dots,q_k\}$ is a non-empty set of configurations of $U$ 
such that $q_i$ is not congruent to $p$, for all $1\leq i\leq k$.
Let $F_i$ be the set of $(q_i,p)$-feasible points, $1\leq i\leq k$. 
Then $\R^d-\bigcup_{i=1}^k F_i$
is a non-empty open set.
\end{lemma}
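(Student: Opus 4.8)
The plan is to analyze each feasible set $F_i$ separately and show it is contained in a set of measure zero, so that the finite union $\bigcup_{i=1}^k F_i$ cannot cover $\R^d$; openness will then follow from a continuity/closedness argument. First I would fix an index $i$ and understand the structure of $F_i$. A point $x$ is $(q_i,p)$-feasible precisely when there is a $y\in\R^d$ with $\|p(u)-x\|=\|q_i(u)-y\|$ for all $u\in U$. The key observation is that specifying these distances from $x$ to the (fixed) points $p(u)$, together with the distances among the $p(u)$ themselves, determines the congruence class of the configuration $p$ extended by $x$; feasibility says that this same congruence class is realized by $q_i$ extended by $y$. So $x$ is feasible if and only if the configuration $(p,x)$ on $U\cup\{x\}$ is congruent to some configuration $(q_i,y)$ on $U\cup\{y\}$.

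The main obstacle, and the heart of the argument, is to extract from the \emph{incongruence} of $q_i$ and $p$ a genuine algebraic constraint on $x$. Here I would use the Cayley--Menger determinant (or equivalently the Gram matrix relative to an affine basis). Congruence of $(p,x)$ and $(q_i,y)$ forces all pairwise squared distances to agree, and the squared distances among the $p(u)$ already differ from those among the $q_i(u)$ at \emph{some} pair $\{u,w\}$, since $q_i\not\cong p$. The idea is that the feasible $x$ must satisfy a polynomial equation that is not identically zero: essentially, the extra point $x$ cannot repair a pre-existing discrepancy in the internal distances of $U$. More precisely, consider the Cayley--Menger-type relation expressing that the points $p(u)$ together with $x$ span at most a $d$-dimensional affine space, i.e. the appropriate $(|U|+2)\times(|U|+2)$ Cayley--Menger determinant vanishes. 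Both $(p,x)$ and $(q_i,y)$ must satisfy their respective such determinantal conditions, and matching squared distances between the two sides yields a nontrivial polynomial condition on the squared distances $\|p(u)-x\|^2$, hence a nontrivial polynomial condition on $x$. Because $q_i\not\cong p$, this polynomial is not the zero polynomial, so its zero set $Z_i\supseteq F_i$ is a proper algebraic subset of $\R^d$, which is closed and has empty interior (Lebesgue measure zero). Verifying that the polynomial is genuinely nonzero is the delicate point: I expect one selects an affine basis among the $p(u)$ (if $U$ affinely spans a low-dimensional flat, one treats the degrees of freedom of $x$ orthogonal to $\mathrm{Aff}(p(U))$ directly), writes the congruence constraints in coordinates, and checks that the discrepancy at the pair $\{u,w\}$ propagates to a non-vanishing leading term.

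Granting that each $F_i$ lies in a proper closed algebraic set $Z_i$ with empty interior, the finite union $\bigcup_{i=1}^k Z_i$ is again closed with empty interior, since a finite union of nowhere-dense closed sets is nowhere dense (Baire, or directly: its complement contains the intersection of finitely many dense open sets). Therefore $\R^d-\bigcup_{i=1}^k F_i \supseteq \R^d-\bigcup_{i=1}^k Z_i$ is nonempty, and it remains to argue it is open. For openness I would show directly that $\bigcup_{i=1}^k F_i$ need not itself be closed, so instead I would prove that any point of $\R^d-\bigcup_i Z_i$ has a neighborhood avoiding all $F_i$: since $Z_i$ is closed and $F_i\subseteq Z_i$, the open set $\R^d-\bigcup_i Z_i$ is contained in the desired complement and is nonempty, giving a nonempty open subset of $\R^d-\bigcup_i F_i$. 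To conclude that the \emph{entire} set $\R^d-\bigcup_i F_i$ is open, I would instead establish that each $F_i$ is itself closed (as the image/zero-locus described by continuous equations, using compactness to handle the existential quantifier over $y$ after fixing the relevant distances), whence the finite union is closed and its complement is open; combined with the measure-zero argument above, this complement is a nonempty open set, as required.
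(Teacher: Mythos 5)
Your closedness argument for each $F_i$ (bound the associates $y$, extract a convergent subsequence) is essentially the paper's, and that part is fine. The gap is in the non-emptiness half: the claim that each $F_i$ is contained in a proper algebraic subset $Z_i$ of $\R^d$ (hence has measure zero) is false in general. The existential quantifier over $y$ does not reduce to a single nontrivial polynomial equation in $x$; after eliminating $y$ one gets a semi-algebraic set defined partly by \emph{inequalities}, which can be full-dimensional. Concretely, take $d=2$, $U=\{a,b\}$, $p(a)=q(a)=(0,0)$, $p(b)=(1,0)$, $q(b)=(2,0)$, so $q\not\cong p$. For $x=(0,10)$ set $r=\|x-p(a)\|=10$ and $s=\|x-p(b)\|=\sqrt{101}$; a suitable $y$ exists because the circles of radius $r$ about $q(a)$ and radius $s$ about $q(b)$ intersect (the triangle inequalities $|r-s|\le 2\le r+s$ hold strictly), and they continue to intersect for all $x$ near $(0,10)$. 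So $F_1$ contains an open ball, and no measure-zero or Baire-category argument applied to the full space $\R^d$ can work. This degenerate situation (where $p(U)$ does not affinely span $\R^d$) is not a corner case you may discard: in the application of the lemma, $U$ is a neighbour set $N_G(V_1)$ and can well have fewer than $d+1$ elements. Even when $p(U)$ does span, your Cayley--Menger polynomial's non-vanishing is exactly the point you defer, so nothing is established there either.

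The paper avoids this by never claiming $F_i$ is small in $\R^d$. It proves the stronger statement $S\cap\mathrm{Aff}(p(U-\{a\}))\neq\emptyset$ by induction on $|U|$: configurations $q_i$ whose restriction to $U-\{a\}$ is already incongruent to $p|_{U-\{a\}}$ are handled by the induction hypothesis (their feasible sets meet the affine hull in a relatively closed set with non-empty relatively open complement), while for the remaining $q_i$ one normalizes $q_i|_{U-\{a\}}=p|_{U-\{a\}}$ by an isometry; then for $x\in\mathrm{Aff}(p(U-\{a\}))$ the only possible associate is $y=x$ itself, which kills the existential quantifier and reduces feasibility to $x$ lying on the bisector hyperplane of $p(a)$ and $q_i(a)$ --- a proper affine subspace of the affine hull. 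That restriction to the affine hull is the idea your proposal is missing.
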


\begin{proof}
Let $S=\R^d-\bigcup_{i=1}^k F_i$.
We claim that $F_i$ is closed for every $i\in \{1,\dots, k\}$, which will imply that
$S$ is open.
Let $x_j\to x$ be a convergent sequence with $x_j\in F_i$, $j\in \mathbb{N}$,
and let $y_j$ be a $(q_i,p)$-associate of $x_j$. The set $\{y_j:j\in \mathbb{N}\}$ is bounded. Hence there exists a convergent subsequence $y_{j_{\ell}}\to y$. Then $y$ is a $(q_i,p)$-associate of $x$, which gives $x\in F_i$.
This proves the claim.

In the rest of the proof 
we show that $S$ is non-empty. Notice that $|U|\geq 2$ must hold.
We shall prove the following stronger statement by induction on $|U|$: for every $a\in U$ we have
\begin{equation}
\label{U}
S\cap \text{Aff}(p(U-\{a\}))\neq \emptyset
\end{equation}
First suppose that 
$|U|=2$, and let $U=\{a,b\}$. Then we have $||q_i(a)-q_i(b)||\neq ||p(a)-p(b)||$,
since $q_i$ and $p$ are not congruent. Thus $p(b)\in S$, and hence (\ref{U}) follows.   

Next suppose that $|U|\geq 3$. Let 
$$Q'=\{q\in Q: p|_{U-\{a\}}\ \hbox{is not congruent to}\ q|_{U-\{a\}}\},$$ and 
let $Q''=Q-Q'$.
By putting $F'=\bigcup_{q_i\in Q'}F_i$ and $F''=\bigcup_{q_i\in Q''}F_i$, we have
$S=\R^d-F'-F''$. 
By induction, the set ${\rm Aff}(p(U-\{a,b\}))-F'$ is non-empty for every $b\in U-\{a\}$. Since $F'$ is closed, 
this implies that
\begin{equation}
\label{Ueq1}    
{\rm Aff}(p(U-\{a\}))-F'\ \hbox{ is non-empty and relatively open in}\ {\rm Aff}(p(U-\{a\})).
\end{equation}
We claim that for all $q_i \in Q''$ the set
\begin{equation}
\label{Ueq2}    
F_i\cap {\rm Aff}(p(U-\{a\}))\ \hbox{ is either empty or a proper affine subspace of}\ {\rm Aff}(p(U-\{a\})).
\end{equation}
To prove the claim, let $q_i\in Q''$. By replacing $q_i$ with $\pi \circ q_i$, where $\pi$ is an appropriate isometry of $\R^d$, we may assume that $p|_{U-\{a\}}= q_i|_{U-\{a\}}$. Then it follows from the incongruency of $p$ and $q_i$ that $p(a)\neq q_i(a)$. Suppose that $x\in {\rm Aff}(p(U-\{a\}))$ and $y$ is a $(q_i,p)$-associate of $x$. Then there exists an isometry that fixes each point of $p(U-\{a\})$ and maps $x$ to $y$.
This isometry fixes each point of ${\rm Aff}(p(U-\{a\}))$, therefore, $y=x$. So the only possible $(q_i,p)$-associate of $x$ is $x$ itself.
It follows that $x$ is $(q_i,p)$-feasible if and only if $||x-p(a)||=||x-q_i(a)||$, that is, if $x$ is in the bisector hyperplane $H$ of $p(a)$ and $q_i(a)$.
Since $q_i$ and $p$ are not congruent, we obtain ${\rm Aff}(p(U-\{a\}))\not\subseteq H$. This proves the claim.

The lemma follows by noting that (\ref{Ueq1}) and (\ref{Ueq2}) yield that the set
$S\cap \text{Aff}(p(U-\{a\}))=\big({\rm Aff}(p(U-\{a\}))-F'\big)-F''$ is non-empty, and hence (\ref{U}) holds. 
\end{proof}

\begin{lemma}
\label{lem gllaza2}
Let $G=(V,E)$ be a graph, $u,v\in V$, and suppose that $G[V_0]$ is a $(u,v)$-rigid
subgraph of $G$.
Let $V_1$ be the vertex set of some component of $G-V_0$.
Then $\{u,v\}$ is weakly globally linked in $G$ in $\R^d$ if and only if $\{u,v\}$ is weakly globally linked in $G/V_1$ in $\R^d$.
\end{lemma}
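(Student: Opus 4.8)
The statement is an equivalence, and one direction is already granted to us. As remarked at the start of Subsection \ref{sec:secondlemma}, the ``if'' direction (weak global linkedness in $G/V_1$ implies weak global linkedness in $G$) follows from Lemma \ref{lem gllaza1}, since $G/V_1$ is obtained from $G$ by contracting edges not induced by $V_0$ (the component $V_1$ lies in $G-V_0$, and contracting it amounts to contracting a connected subgraph disjoint from $V_0$). So the entire content of the lemma is the forward direction: assuming $\{u,v\}$ is weakly globally linked in $G$, I must produce a generic realization of $G/V_1$ in which $\{u,v\}$ is globally linked.

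The plan is to start from a generic realization $(G,p)$ in which $\{u,v\}$ is globally linked, and to build from it a generic realization of $G/V_1$ witnessing weak global linkedness there. Let $v_1$ denote the vertex of $G/V_1$ obtained by collapsing $V_1$, and let $U=N_G(V_1)$ be the neighborhood through which $V_1$ attaches to the rest of the graph; in $G/V_1$ the vertex $v_1$ is joined exactly to $U$. The natural strategy is a placement argument: I want to position $v_1$ at a point $x\in\R^d$ and choose the remaining coordinates (essentially inherited from $p$ on $V-V_1$) so that the resulting framework $(G/V_1, \bar p)$ is generic and $\{u,v\}$ is globally linked in it. The rigidity of $G[V_0]$ is what pins down the distance $\|p(u)-p(v)\|$ across every equivalent realization, and this is what I must transport to $G/V_1$.

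Here is where Lemma \ref{not feasible point} enters, and I expect this to be the crux of the argument. Suppose, for contradiction, that $\{u,v\}$ is \emph{not} weakly globally linked in $G/V_1$; equivalently $\{u,v\}$ is globally loose there, so for \emph{every} generic placement of $v_1$ there is an equivalent realization of $G/V_1$ changing the $u$--$v$ distance. By Proposition \ref{finiteequi}, applied to the rigid part of the structure, each such generic realization of $G/V_1$ admits only finitely many incongruent equivalent realizations, and each of these induces a configuration on $U$ that is incongruent to the original one on $U$ (otherwise the $(u,v)$-rigid subgraph $G[V_0]$, which is unaffected by $v_1$, would force the $u$--$v$ distance to be preserved). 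These finitely many incongruent configurations on $U$ play the role of $q_1,\dots,q_k$ in Lemma \ref{not feasible point}, and the placement point $x$ for $v_1$ in the original framework of $G$ must be a $(q_i,p)$-feasible point for some $i$ — because in $(G,p)$ the component $V_1$ realizes an actual re-embedding consistent with one of these altered $U$-configurations, while keeping the edge lengths from $v_1$ to $U$ fixed. Lemma \ref{not feasible point} guarantees that the complement $\R^d-\bigcup_i F_i$ is a nonempty open set, so I can choose the coordinate $x=p(v_1)$ (and hence, generically, the placement of the contracted vertex) to avoid all the feasible sets. For such a generic choice no equivalent realization of $G/V_1$ can change the $u$--$v$ distance, contradicting global looseness and establishing that $\{u,v\}$ is weakly globally linked in $G/V_1$.

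The main obstacle I anticipate is bookkeeping the correspondence between equivalent realizations of $G$ and those of $G/V_1$ through the interface $U$: I must argue that an edge-length-preserving re-embedding of $G$ that alters the $u$--$v$ distance necessarily forces the point $p(v_1)$ to be $(q,p)$-feasible for one of the finitely many incongruent $U$-configurations arising from the rigid ``outer'' part, and conversely that avoiding all feasible sets genuinely rules out every such re-embedding. Verifying that only finitely many incongruent $U$-configurations occur (so that Lemma \ref{not feasible point} is applicable with a finite set $Q$) will rely on Proposition \ref{finiteequi} together with the rigidity of $G[V_0]$, and making the genericity of the constructed realization of $G/V_1$ compatible with the open-set choice from Lemma \ref{not feasible point} is the technical point that will require care.
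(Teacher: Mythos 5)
Your overall architecture is the paper's: fix a generic $(G,p)$ witnessing weak global linkedness, keep $p$ on $V-V_1$, and use Proposition \ref{finiteequi} (via the rigidity of $G[V_0]$) together with Lemma \ref{not feasible point} to place the contracted vertex $v_1$ at a generic point avoiding all $(q_i,p)$-feasible sets. However, there is a genuine gap at the step that makes Lemma \ref{not feasible point} applicable, namely the claim that each bad configuration restricted to $U=N_G(V_1)$ is incongruent to $p|_U$. Your parenthetical justification --- that otherwise the $(u,v)$-rigid subgraph $G[V_0]$ ``would force the $u$--$v$ distance to be preserved'' --- does not work: $U$ is in general a small proper subset of $V_0$, and an equivalent realization of $G-V_1$ can change the $u$--$v$ distance while agreeing with $p$ on $U$ up to isometry. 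The correct argument, and the only place in the entire proof where the hypothesis that $\{u,v\}$ is globally linked in the \emph{full} framework $(G,p)$ (rather than just in $(G-V_1,p|_{V-V_1})$) is used, is this: if some $q_i|_U$ were congruent to $p|_U$, one could apply the corresponding isometry to $p|_{V_1}$ and re-attach the original copy of $V_1$ to the realization extending $q_i$, obtaining a framework equivalent to $(G,p)$ with a different $u$--$v$ distance, a contradiction. Without this, the incongruence hypothesis of Lemma \ref{not feasible point} fails (for a congruent $q_i|_U$ every point of $\R^d$ is feasible, so the complement is empty) and the construction collapses.

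Two smaller points also need to be pinned down. First, the set $Q$ of bad $U$-configurations must be defined purely in terms of realizations of $G-V_1$ equivalent to $(G-V_1,p|_{V-V_1})$, so that it does not depend on the placement $x$ you are trying to choose; your phrasing ties the $q_i$ to equivalent realizations of $G/V_1$, which already involve $x$, and that would make the application of Lemma \ref{not feasible point} circular. Second, the degenerate case where $\{u,v\}$ is already globally linked in $(G-V_1,p|_{V-V_1})$, i.e.\ $Q=\emptyset$, must be handled separately (an arbitrary generic extension works there), since Lemma \ref{not feasible point} requires $Q$ to be non-empty.
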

\begin{proof} 
Since $G/V_1$ can be obtained from $G$ by contracting edges not induced by $V_0$, the “if" direction follows by repeated applications of Lemma \ref{lem gllaza1}.
To prove the “only if" direction suppose that $\{u,v\}$ is weakly globally linked in $G$ and let $(G,p)$ be a generic realization of $G$ in which $\{u,v\}$ is globally linked.
Let $v_1$ be the vertex of $G/V_1$ obtained by the contraction of $V_1$ in $G$.
We shall prove that $p|_{V-V_1}$ has an extension to $(V-V_1)\cup \{v_1\}$ that is a generic realization of  $G/V_1$
in which $\{u,v\}$ is globally linked.
We may assume that $\{u,v\}$ is not globally linked in $(G-V_1,p|_{V-V_1})$, for otherwise we are done by choosing an arbitrary generic extension.

Let $q_1,\dots,q_k$ be a maximal set of pairwise incongruent configurations of $V_0$ 
such that
$||q_i(u)-q_i(v)||\neq ||p(u)-p(v)||$ and
$q_i$ is a restriction of some realization of $G-V_1$ which is equivalent to $(G-V_1,p|_{V-V_1})$,
 for $1\leq i\leq k$.
By our assumption $k\geq 1$. 
Proposition \ref{finiteequi} implies
that $k$ is finite, since 
$G[V_0]$ is rigid, $p$ is generic and $(G[V_0],q_i)$ is equivalent to $(G[V_0],p|_{V_0})$.
For all $i\in \{1,\dots,k\}$ the configurations $q_i|_{N_G(V_1)}$ and $p|_{N_G(V_1)}$ are incongruent, for otherwise
$q_i$ would be extendible to a configuration $q_i'$ so that $(G,q_i')$ is equivalent to $(G,p)$, contradicting the assumption that $\{u,v\}$ is globally linked in $(G,p)$.

Applying Lemma \ref{not feasible point} to $N_G(V_1)$, $p|_{N_G(V_1)}$ and 
the set $Q=\{q_1|_{N_G(V_1)},\dots, q_k|_{N_G(V_1)}\}$  gives that there is some $x=(x_1,\dots, x_d)\in \R^d$ for which $x$ is not $(q_i|_{N_G(V_1)},p|_{N_G(V_1)})$-feasible for all $i\in \{1,\dots,k\}$ and for which $p(V-V_1)\cup \{x\}$ is generic. We can now complete the proof of the lemma by considering the generic realization $(G/V_1,\overline{p})$, where
$\overline{p}|_{V-V_1}=p|_{V-V_1}$ and $\overline{p}(v_1)=x$.
Then 
$\{u,v\}$ is globally linked in $(G/V_1,\overline{p})$.
Indeed, the existence of an equivalent realization $(G/V_1,q)$ with $||q(u)-q(v)||\neq ||\overline{p}(u)-\overline{p}(v)||$ would imply that $q|_{V_0}=q_i$ for some $1\leq i\leq k$ and that
$x$ is $(q_i|_{N_G(V_1)},p|_{N_G(V_1)})$-feasible, contradicting the choice of $x$.
\end{proof}

Let $G=(V,E)$ be a graph, $u,v\in V$, and let $G[V_0]$ be a $(u,v)$-rigid subgraph of $G$.
Let $e=(s_1,s_2)\in E$ be an edge with $s_1,s_2\notin V_0$. Notice that Lemma \ref{lem gllaza2} implies that $\{u,v\}$ is weakly globally linked in $G$ in $\R^d$ if and only if $\{u,v\}$ is weakly globally linked in $G/e$ in $\R^d$: each of these two conditions is equivalent to the condition that $\{u,v\}$ is weakly globally linked in $G/V_1$, where $V_1$ is the connected component of $G-V_0$ that contains $e$. By the same argument, for any connected subgraph $G_1$ of $G-G_0$,  $\{u,v\}$ is weakly globally linked in $G$ in $\R^d$ if and only if $\{u,v\}$ is weakly globally linked in $G/V(G_1)$ in $\R^d$.

\section{Weakly globally linked pairs in $\R^2$}
\label{sec:2d}

In this section we focus on the $d=2$ case. 
Thus, we shall occasionally write that a graph is (globally) rigid to mean that it is (globally) rigid in $\R^2$, and we may similarly omit the dimension when referring to global linkedness of vertex pairs in graphs.
This section contains one of our main results, a characterization of weakly globally
linked pairs in graphs.

\subsection{Weakly globally linked pairs in $3$-connected graphs}

We start with the special case of $3$-connected graphs. By Lemma \ref{notlinked} it suffices to consider 
non-adjacent
linked pairs $\{u,v\}$ of $G$, or equivalently,
pairs $\{u,v\}$ for which there exists some subgraph $G_0=(V_0,E_0)$ of $G$ with $u,v\in V_0$ such that $G_0+uv$ is an ${\cal R}_2$-circuit.

\begin{theorem}
\label{globlaza jellemzes2}
Let 
$G=(V,E)$ be a $3$-connected graph and $u,v\in V$ with $uv\notin E$.
Suppose that $G_0=(V_0,E_0)$ is a subgraph of $G$ with $u,v\in V_0$ such that $G_0+uv$ is an ${\cal R}_2$-circuit. Then 
$\{u,v\}$ is weakly globally linked in $G$ in $\R^2$ if and only if 
${\rm Clique}(G,V_0)$ is globally rigid in $\R^2$.
\end{theorem}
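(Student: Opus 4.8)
The statement is an equivalence, and I would prove the two directions separately, treating the ``if'' direction as essentially a corollary of the already-established machinery and reserving the real work for the ``only if'' direction. First I would record the structural consequences of the hypothesis that $G_0+uv$ is an ${\cal R}_2$-circuit: since an ${\cal R}_2$-circuit on vertex set $V_0$ has $2|V_0|-2$ edges and is redundantly rigid, deleting $uv$ shows that $G_0$ is (minimally) rigid on $V_0$, so $G[V_0]\supseteq G_0$ is a $(u,v)$-rigid subgraph of $G$. This already settles the ``if'' direction: if ${\rm Clique}(G,V_0)$ is globally rigid, then Theorem \ref{globlaza jellemzesd} (with $d=2$, using the $3$-connectivity of $G$ and the $(u,v)$-rigidity of $G[V_0]$) gives that $\{u,v\}$ is weakly globally linked in $G$.

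For the ``only if'' direction the plan is to assume that $\{u,v\}$ is weakly globally linked in $G$ and to transport this property, through contractions and clique sums, down to the small graph ${\rm Clique}(G,V_0)$, where Theorem \ref{thm} can be applied. Writing $H={\rm Con}(G,V_0)$ and iterating Lemma \ref{lem gllaza2} over the components of $G-V_0$ (each disjoint from the $(u,v)$-rigid core $G[V_0]$), weak global linkedness is preserved, so $\{u,v\}$ is weakly globally linked in $H$. Adding the clique-fill edges $E'$ keeps it weakly globally linked, since passing to a supergraph on the same vertex set cannot destroy the property; and the resulting graph $H+E'$ is an iterated clique sum of $C:={\rm Clique}(G,V_0)$ with complete graphs, because each contracted-component vertex $v_i$ is now joined to the clique $N_G(V_i)\subseteq V_0$. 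Lemma \ref{cliquesum} then pushes weak global linkedness down to $C$, i.e. $\{u,v\}$ is weakly globally linked in ${\rm Clique}(G,V_0)$.

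It then remains to show that $C+uv$ is globally rigid and to invoke Lemma \ref{J}(b). For this I would use that $C+uv$ contains the spanning redundantly rigid subgraph $G_0+uv$, and that every further edge of $C+uv$ joins two vertices of this already rigid graph, so adding them one by one preserves redundant rigidity; thus $C+uv$ is redundantly rigid. I would check separately that $C$ (hence $C+uv$) is $3$-connected: a $2$-element cut of $C$ cannot split any clique $N_G(V_i)$, so it would lift to a $2$-cut of $G$, contradicting $3$-connectivity. By Theorem \ref{thm}, $C+uv$ is globally rigid, and since $\{u,v\}$ is weakly globally linked in $C$ with $uv\notin E(C)$, Lemma \ref{J}(b) yields that ${\rm Clique}(G,V_0)$ is globally rigid.

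The main obstacle—and the point requiring genuine care—is the edge case in which $u$ and $v$ are both neighbors of a common component $V_i$, so that $uv\in E({\rm Clique}(G,V_0))$ and the application of Lemma \ref{J}(b) above (which needs $uv\notin E$) is unavailable. Here I would argue directly: ${\rm Clique}(G,V_0)$ then already contains the circuit $G_0+uv$ as a spanning redundantly rigid subgraph and is $3$-connected, hence is globally rigid by Theorem \ref{thm} outright (and, consistently, $\{u,v\}$ is weakly globally linked by Corollary \ref{coro}). Splitting the proof according to whether $uv\in E({\rm Clique}(G,V_0))$ completes the argument. I expect the transport of weak global linkedness through the two contraction lemmas and the clique-sum lemma to be the conceptual core, with the circuit/redundant-rigidity bookkeeping and this edge case being the technical points most likely to cause slips.
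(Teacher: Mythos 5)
Your proof is correct, and your ``if'' direction coincides with the paper's. For the ``only if'' direction, however, you take a genuinely different (and heavier) route. The paper argues by contradiction: if ${\rm Clique}(G,V_0)$ were not globally rigid, then --- since ${\rm Clique}(G,V_0)+uv$ is $3$-connected and redundantly rigid, hence globally rigid by Theorem \ref{thm} --- the edge $uv$ would be critical there, so $\{u,v\}$ would be globally loose in ${\rm Clique}(G,V_0)$ by Lemma \ref{critical}, and global looseness passes back to $G$ because $G$ is obtained from ${\rm Clique}(G,V_0)$ by clique sums and edge deletions (Lemma \ref{cliquesum} together with the monotonicity of weak global linkedness under taking supergraphs). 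This uses only the ``cheap'' directions of the transport lemmas. You instead push weak global linkedness forward from $G$ to ${\rm Con}(G,V_0)$ via the second contraction lemma (Lemma \ref{lem gllaza2}) --- the paper's hardest geometric tool, which its own proof of this theorem does not need at all --- then down to ${\rm Clique}(G,V_0)$ via the clique-sum structure and Lemma \ref{cliquesum}, and finish with Lemma \ref{J}(b). Both arguments hinge on the same computation that ${\rm Clique}(G,V_0)+uv$ is $3$-connected and redundantly rigid, hence globally rigid; your version is direct rather than contrapositive but costs strictly more machinery (and Lemma \ref{J}(b) is itself proved via Lemma \ref{critical}, so you do not actually avoid that ingredient). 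Your explicit handling of the degenerate case $uv\in E({\rm Clique}(G,V_0))$ is sound and is a point the paper leaves implicit: there the graph is globally rigid outright by Theorem \ref{thm}, so in the paper's contrapositive setup the hypothesis fails immediately for the same reason you give.
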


\begin{proof}
Since $G[V_0]$ is rigid, sufficiency follows from Theorem \ref{globlaza jellemzesd}.
To prove the other direction
suppose, for a contradiction, that $\{u,v\}$ is weakly globally linked in $G$
but ${\rm Clique}(G,V_0)$ is not globally rigid.
Since $G_0+uv$ is redundantly rigid,
so is 
${\rm Clique}(G,V_0)+uv$. The $3$-connectivity of $G$ implies
that ${\rm Clique}(G,V_0)$ is $3$-connected. Thus 
${\rm Clique}(G,V_0)+uv$ is globally rigid by Theorem \ref{thm}.
Hence $\{u,v\}$ is globally loose in ${\rm Clique}(G,V_0)$ by Lemma \ref{critical}.
As $G$ can be obtained from ${\rm Clique}(G,V_0)$ by clique sum operations and removing edges,
Lemma \ref{cliquesum} implies that $\{u,v\}$ is globally loose in $G$, a contradiction.
\end{proof}

\subsection{Weakly globally linked pairs and 2-separators}

In this subsection we shall prove some lemmas that 
describe, among others, how weak global linkedness is affected when
the graph is cut into two parts along a separating pair of vertices.
These lemmas will enable us to reduce the question of
whether a linked pair of vertices in a graph $G$ is weakly globally linked to the case
when $G$ is $3$-connected.
We shall also need the following extension of \cite[Corollary 5]{JJS2}.

\begin{lemma}
\label{glob laza elegs min merev}
Let $G=(V,E)$ be a rigid graph, $ab\in E$ an ${\cal R}_2$-bridge in $G$, 
and $u,v\in V$ with $uv\notin E$.
Suppose that 
$G$ has no $(u,v)$-rigid proper induced subgraph. 
Then $\{u,v\}$ is globally loose in $G$.
\end{lemma}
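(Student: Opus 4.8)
The plan is to show that in no generic realization is $\{u,v\}$ globally linked, and the cleanest route is to exhibit a globally rigid supergraph of $G$ in which $uv$ is a critical edge, so that Lemma \ref{critical} applies directly. First I would extract two consequences of the hypotheses. Since $G$ is rigid and $ab$ is an ${\cal R}_2$-bridge, $r_2(G-ab)=2|V|-4$. If $\{u,v\}$ were linked in $G-ab$, then a $(u,v)$-rigid subgraph $H\subseteq G-ab$ would make the induced subgraph $G[V(H)]$ $(u,v)$-rigid, which by the hypothesis forces $V(H)=V$; but then $G-ab$ would contain a spanning rigid subgraph, contradicting $r_2(G-ab)=2|V|-4$. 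Hence $\{u,v\}$ is not linked in $G-ab$, so $uv$ is independent from $G-ab$ and $r_2(G+uv)=r_2(G)=2|V|-3$. In particular $ab$ is no longer a bridge in $G+uv$, so it lies in some ${\cal R}_2$-circuit there; and since $ab$ lies in no ${\cal R}_2$-circuit of $G$, any such circuit $C$ must use $uv$. Then $C-uv\subseteq G$ is minimally rigid, so $G[V(C)]$ is $(u,v)$-rigid and minimality forces $V(C)=V$. Thus $C$ is a spanning ${\cal R}_2$-circuit through both $ab$ and $uv$; being a circuit it is redundantly rigid, and as $G+uv\supseteq C$ is spanning, $G+uv$ is redundantly rigid as well.

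If $G+uv$ is $3$-connected the argument is then immediate: by Theorem \ref{thm} it is globally rigid, and $uv$ is critical in it because $(G+uv)-uv=G$ is not redundantly rigid (it has the bridge $ab$) and hence not globally rigid by Theorem \ref{thm:hend}. Lemma \ref{critical} now yields that $\{u,v\}$ is globally loose in $G$.

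The remaining case, where $G+uv$ is not $3$-connected, is the \emph{main obstacle}. Here I would fix a $2$-separator $\{c,d\}$ of $G+uv$; since $uv$ is an edge, $u$ and $v$ lie on the same side $A$, and $\{c,d\}$ is also a separator of $G$ with the other side $B$. If $\{c,d\}$ still separates $u$ from $v$ in $G$, then $\kappa_G(u,v)\le 2$ and Lemma \ref{kappa} finishes. Otherwise $u$ and $v$ remain connected in $G-c-d$. A rigidity argument across the separator—each side together with the (possibly virtual) edge $cd$ is rigid—shows that if $cd\in E(G)$, or if $\{c,d\}$ is linked in $G[A]$, or if $B$ is not vertex-minimally $(c,d)$-rigid, then $G$ has a proper $(u,v)$-rigid induced subgraph (obtained by gluing $G[A]$ to a minimal $(c,d)$-rigid subgraph of $G[B]$), contradicting the hypothesis.

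In the surviving configuration $cd\notin E(G)$, the pair $\{c,d\}$ is linked in $G[B]$ but unlinked in $G[A]$, so $cd$ is an ${\cal R}_2$-bridge of the rigid graph $G[A]+cd$. I would then reduce: since $\{c,d\}$ is linked in $G$, adding $cd$ should not affect whether $\{u,v\}$ is weakly globally linked, and $G+cd$ is a clique sum of $G[A]+cd$ and $G[B]+cd$ along the edge $cd$, so by Lemma \ref{cliquesum} the question transfers to $G[A]+cd$. The latter is strictly smaller and again rigid, has $cd$ as a bridge, and is vertex-minimally $(u,v)$-rigid, so an induction on $|V|$ applies, with the $3$-connected case and the $\kappa_G(u,v)\le2$ case as base cases. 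I expect the technical heart of the argument to lie precisely here: in verifying that the three hypotheses are inherited by $G[A]+cd$, and in justifying that a linked separating pair may be promoted to an edge without changing the weak global linkedness status of $\{u,v\}$.
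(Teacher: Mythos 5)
Your route is genuinely different from the paper's. The paper argues directly and geometrically: it takes a minimally rigid spanning subgraph $H$ of $G$ containing the bridge $ab$, observes that the hypothesis on induced subgraphs forces $H+uv$ to be a spanning ${\cal R}_2$-circuit, and then invokes the flexing lemma \cite[Lemma 11]{JJS2} to deform $(H-ab,p_0)$ into an equivalent framework that changes $||p(u)-p(v)||$ while preserving $||p(a)-p(b)||$; every edge $xy\in E-E(H)$ survives the flex because it lies in an ${\cal R}_2$-circuit of $H+xy$ avoiding the bridge $ab$, hence in a rigid subgraph of $H-ab$. This exhibits the required equivalent realization in one stroke, with no case analysis on connectivity. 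You instead build a spanning ${\cal R}_2$-circuit of $G+uv$ through $ab$ and $uv$ (that derivation is correct), conclude via Theorems \ref{thm} and \ref{thm:hend} and Lemma \ref{critical} when $G+uv$ is $3$-connected --- that half is complete --- and fall back on a $2$-separator induction otherwise. I checked the reduction you sketch: in the surviving configuration $\{c,d\}$ is linked in exactly one side (linked in neither would give $r_2(G)\leq 2|V|-4$; linked in $G[A]$, or in a proper induced subgraph of $G[B]$, yields by gluing along $\{c,d\}$ a proper induced $(u,v)$-rigid subgraph of $G$), and $G[A]+cd$ does inherit rigidity, the bridge $cd$, and the absence of proper induced $(u,v)$-rigid subgraphs, so the induction closes. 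The price of your approach is length and case analysis; what it buys is independence from the analytic flexing lemma of \cite{JJS2}.

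Two cautions are needed to make your sketch into a proof. First, do not appeal to the biconditional ``adding $cd$ does not affect weak global linkedness'': that is essentially Lemma \ref{nem 2of visszavezetes}, which the paper proves \emph{later, using the present lemma}, so invoking it here would be circular. You only need the easy direction: if $\{u,v\}$ is globally loose in $G[A]+cd$, then by Lemma \ref{cliquesum} it is globally loose in $G+cd$, and since a pair that is weakly globally linked in a graph is weakly globally linked in any supergraph on the same vertex set, $\{u,v\}$ is globally loose in $G$. Second, dispose of the degenerate separators explicitly: $\{c,d\}=\{u,v\}$ cannot occur, because a non-adjacent separating pair of a rigid graph is linked in one of the two sides, which would produce a proper induced $(u,v)$-rigid subgraph; and if exactly one of $u,v$ lies in $\{c,d\}$ the argument goes through unchanged. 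With these points made explicit, your proof is correct.
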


\begin{proof}
Let $H=(V,B)$ be a minimally rigid spanning subgraph of $G$. Since $ab$ is an ${\cal R}_2$-bridge, we have
$ab\in B$. 
The graph $H+uv$ contains  a unique ${\cal R}_2$-circuit $C$ with $u,v\in V(C)$. Since
$G$ has no $(u,v)$-rigid proper induced subgraph, $C=H+uv$ must hold. 
Let $(G,p_0)$ be a generic realization of $G$.
By \cite[Lemma 11]{JJS2}
the generic framework
$(H-ab,p_0)$ has an equivalent realization $(H-ab,p_1)$, which can be obtained by a flexing, and for which
$||p_0(u)-p_0(v)||\neq ||p_1(u)-p_1(v)||$ and $||p_0(a)-p_0(b)||= ||p_1(a)-p_1(b)||$.
Consider an edge $xy\in E-B$. Since $H$ is rigid, $xy$ belongs to an ${\cal R}_2$-circuit $C'$ of $H+xy$.
Moreover, 
$ab$ is an ${\cal R}_2$-bridge in $G$ (as well as in its subgraph $H+xy$), and hence $C'$ does not contain $ab$.
Thus there is a 
rigid subgraph of $H-ab$ (namely, $C'-xy$), which contains $x$ and $y$.
Hence the flexing does not
change the distance between $x$ and $y$. Therefore
$(G,p_1)$ is equivalent to $(G,p_0)$. Since the distances between $u$ and $v$ are different in
these realizations, it follows that $\{u,v\}$ is globally loose in $G$.
\end{proof}

\begin{lemma}
\label{glob laza masodfoku csucs}
Let
$G=(V,E)$ be a rigid graph, let $z\in V$ with
$N_G(z)=\{x,y\}$, and let $u,v\in V-\{z\}$. Then $\{u,v\}$ 
is weakly globally linked in $G-z+xy$ if and only if
$\{u,v\}$ is weakly globally linked in $G$.
\end{lemma}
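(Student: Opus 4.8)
The plan is to move between generic realizations of $G$ and of $G'=G-z+xy$, exploiting that $z$ has degree two: once $x,y$ and the two lengths $\|zx\|,\|zy\|$ are fixed, the point $z$ is pinned down up to reflection in the line through $x$ and $y$. I would first record a preliminary fact. Since $z$ has degree two in the rigid graph $G$, both edges $zx,zy$ are $\mathcal R_2$-bridges, so $r_2(G-z)=r_2(G)-2=2|V|-5$ and hence $G-z$ is rigid (and so is $G'$). By Proposition \ref{finiteequi} this means that any generic realization of $G-z$ has only finitely many equivalent realizations up to congruence; this finiteness will be essential.

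For the direction assuming $\{u,v\}$ is weakly globally linked in $G$, I would take a generic realization $(G,p)$ in which $\{u,v\}$ is globally linked and restrict it: set $p'=p|_{V-z}$, a generic realization of $G'$. Given any $(G',q')$ equivalent to $(G',p')$, the edge $xy$ forces $\|q'(x)-q'(y)\|=\|p(x)-p(y)\|$, so the triangle with base $q'(x)q'(y)$ and side lengths $\|p(z)-p(x)\|,\|p(z)-p(y)\|$ can be reconstructed; placing $z$ at an apex yields $(G,q)$ equivalent to $(G,p)$. Global linkedness of $\{u,v\}$ in $(G,p)$ then gives $\|q'(u)-q'(v)\|=\|p'(u)-p'(v)\|$, so $\{u,v\}$ is globally linked in $(G',p')$. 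This direction is routine.

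The substantive direction is the converse: from a generic $(G',p')$ with $\{u,v\}$ globally linked I must build a generic $(G,p)$ with $\{u,v\}$ globally linked. I would keep $p|_{V-z}=p'$ and choose $p(z)$ carefully. The danger is that $G$, unlike $G'$, lacks the edge $xy$, so $G$ admits equivalent realizations in which $\|xy\|$ differs from its value $\ell_1=\|p'(x)-p'(y)\|$; such a realization restricts to an equivalent realization of the rigid graph $G-z$, and could in principle change $\|uv\|$. The key observation is that any equivalent realization $q$ of $G-z$ with $\|q(x)-q(y)\|=\ell_1$ is also equivalent to $(G',p')$, hence preserves $\|uv\|$; therefore every equivalent realization of $G-z$ that changes $\|uv\|$ must change $\|xy\|$, i.e.\ its restriction to $U=\{x,y\}$ is incongruent to $p'|_U$. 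Since $G-z$ is rigid and $p'$ is generic, there are only finitely many such ``bad'' configurations $q_1|_U,\dots,q_k|_U$ of $U$.

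To finish I would apply Lemma \ref{not feasible point} to $U$, the configuration $p'|_U$, and the bad configurations $q_i|_U$ (each incongruent to $p'|_U$): the set of points that are $(q_i|_U,p'|_U)$-feasible for no $i$ is non-empty and open, so I can choose $p(z)$ in it that is also generic over $p'$. With this choice, any $(G,q)$ equivalent to $(G,p)$ either keeps $\|xy\|=\ell_1$, and then restricts to an equivalent realization of $G'$ preserving $\|uv\|$, or it realizes some bad $q_i$, in which case $q(z)$ would exhibit $p(z)$ as $(q_i|_U,p'|_U)$-feasible, contradicting the choice of $p(z)$; thus $\{u,v\}$ is globally linked in $(G,p)$ (the degenerate case $k=0$ is handled by placing $z$ at an arbitrary generic point). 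The main obstacle is precisely this direction: isolating the finitely many spurious equivalent realizations of $G$ — those altering $\|xy\|$ — and placing the freed vertex $z$ so as to obstruct all of them simultaneously, which is exactly what Lemma \ref{not feasible point} is designed to deliver.
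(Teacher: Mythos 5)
Your proof is correct, but it takes a genuinely different route from the paper's. The paper disposes of the lemma in two lines by invoking its general machinery: for the direction from $G-z+xy$ to $G$ it observes that $G-z+xy$ is isomorphic to $G/zx$ and that $G-z$ is rigid, so Lemma \ref{lem gllaza1} applies directly; for the converse it notes that $G+xy$ is the clique sum of $G-z+xy$ and a triangle on $\{z,x,y\}$, so Lemma \ref{cliquesum} applies. You instead argue from first principles: your first direction re-proves the clique-sum reduction in this special case (restrict $p$ to $V-z$ and reconstruct the triangle over any equivalent realization of $G'$, using that $xy\in E(G')$ pins the base length), and your second direction replays, for the two-point set $U=\{x,y\}$, the argument of Lemma \ref{lem gllaza2}: finitely many congruence classes of equivalent realizations of the rigid graph $G-z$ by Proposition \ref{finiteequi}, only those that alter $\|uv\|$ (and hence $\|xy\|$) are dangerous, and Lemma \ref{not feasible point} supplies a generic placement of $z$ that obstructs all of them. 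What your approach buys is self-containedness and an explicit geometric picture of why the degree-two vertex is harmless; what the paper's buys is brevity, since both directions become instances of already-proved general lemmas. One small point of phrasing: your closing dichotomy (``either $\|xy\|$ is preserved or a bad $q_i$ is realized'') is not exhaustive, since an equivalent realization could change $\|xy\|$ while preserving $\|uv\|$; the clean case split is on whether $\|uv\|$ changes, and only in that case does congruence to some bad $q_i$ and the feasibility contradiction arise. This is cosmetic and does not affect correctness.
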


\begin{proof}
Let $G_1=G-z+xy$.
Observe that $G_1$ is isomorphic to $G/zx$. Since
$G-z$ is rigid, we can use Lemma \ref{lem gllaza1} to deduce that if $\{u,v\}$ 
is weakly globally linked in $G_1$, then $\{u,v\}$ is weakly globally linked in $G$.
To prove the other direction suppose that $\{u,v\}$ is weakly globally linked in $G$.
Then it is also weakly globally linked in $G+xy$.
Since $G+xy$ is the clique sum of $G_1$ and a copy of $K_3$,
Lemma \ref{cliquesum} implies that $\{u,v\}$ is weakly globally linked in $G_1$.
\end{proof}

A pair $(a,b)$ of vertices of a $2$-connected graph $H=(V,E)$ is called
a {\it $2$-separator} if $H-\{a,b\}$ is disconnected.

\begin{lemma}
\label{nem 2of visszavezetes}
Let $G=(V,E)$ be a rigid graph with $|V|\geq 4$ and
$(a,b)$ be a $2$-separator in $G$. 
Let $C$ be a connected component of $G-\{a,b\}$ and
let $V_0=V(C)\cup \{a,b\}$. Suppose that $u,v\in V_0$.
Then $\{u,v\}$ is weakly globally linked in $G$ if and only if $\{u,v\}$ is weakly globally linked in $G[V_0]+ab$.
\end{lemma}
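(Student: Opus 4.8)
The plan is to fix notation around the separator and then treat the two directions very asymmetrically, since one is essentially formal while the other carries all the geometric content. Write $D=V\setminus V_0$ for the union of the remaining components of $G-\{a,b\}$, and set $G_1=G[V_0]+ab$ and $G_2=G[D\cup\{a,b\}]+ab$. Because a rigid graph on at least three vertices is $2$-connected, $G$ is $2$-connected; hence each of $a,b$ has a neighbour both in $C$ and in $D$, and there is an $a$–$b$ path through $D$ internally disjoint from $V_0$. The first thing I would prove is that \emph{both} $G_1$ and $G_2$ are rigid. This follows from a flex-extension argument: a non-trivial infinitesimal flex of $G_1$ can be normalized, by subtracting a trivial motion, so that its velocities at $a$ and $b$ both vanish (the edge $ab\in E(G_1)$ forces the relative velocity of $a,b$ along $ab$ to be zero, and a rotation about $a$ kills the perpendicular part); extending it by zero on $D$ then yields a non-trivial infinitesimal flex of $G$, since $\{a,b\}$ separates and every edge of $G$ lies inside $V_0$ or inside $D\cup\{a,b\}$. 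This contradicts the rigidity of $G$, and symmetrically gives rigidity of $G_2$. In particular $G$ and $G_1$ are rigid, so $\{u,v\}$ is linked in each and Lemma \ref{notlinked} raises no issue.

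Next I would dispose of the easy cases. If $ab\in E$, then $G=G+ab$ is the $2$-clique-sum of $G_1$ and $G_2$ along the edge $ab$, and the statement is immediate from Lemma \ref{cliquesum}. So assume $ab\notin E$. For the ``only if'' direction, if $\{u,v\}$ is globally linked in a generic $(G,p)$ then it stays globally linked in the same framework read as $(G+ab,p)$, because adding an edge only shrinks the set of equivalent realizations; hence $\{u,v\}$ is weakly globally linked in $G+ab$, and since $G+ab$ is the $2$-clique-sum of $G_1$ and $G_2$ along $ab$ with $u,v\in V(G_1)$, Lemma \ref{cliquesum} yields weak global linkedness in $G_1=G[V_0]+ab$. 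The same clique-sum step reduces the ``if'' direction to a single edge-deletion statement: weak global linkedness in $G_1$ gives it in $G+ab$, so it remains to show that $\{u,v\}$ weakly globally linked in $G+ab$ implies $\{u,v\}$ weakly globally linked in $G$.

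For this last step I would split according to whether $\{u,v\}$ is linked in $G[V_0]$. If it is, fix a $(u,v)$-rigid subgraph $R\subseteq G[V_0]$; then each component of $G-V(R)$ contained in $D$ is a component of $G$ minus a $(u,v)$-rigid set and can be contracted using Lemma \ref{lem gllaza2} (applied in $G$), after which the resulting degree-$2$ vertices adjacent to $\{a,b\}$ are removed via Lemma \ref{glob laza masodfoku csucs}; this identifies the weak global linkedness of $\{u,v\}$ in $G$ with that in $G[V_0]+ab$ outright. The residual case — every $(u,v)$-rigid subgraph of $G_1$ uses the edge $ab$ — is where I expect the real difficulty. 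Here $G[V_0]$ is not rigid, so $\{a,b\}$ is not linked in $G[V_0]$; combined with the rigidity of $G$ this forces the far side $G[D\cup\{a,b\}]$ to be rigid, for otherwise both sides would admit continuous flexes varying $|ab|$ over an interval and these could be glued along $a,b$ into a flex of $G$.

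The main obstacle is precisely this residual case, because the clique-sum and contraction reductions all break down: there is no $(u,v)$-rigid subgraph avoiding $D$, so neither Lemma \ref{lem gllaza1} nor Lemma \ref{lem gllaza2} can be used to collapse the far side. I would argue directly at the level of frameworks, mirroring the proof of Lemma \ref{lem gllaza2}: start from a generic realization of $G_1$ in which $\{u,v\}$ is globally linked, attach a generic realization of the rigid far side along $a,b$, and invoke the feasibility/associate machinery of Lemma \ref{not feasible point} — with Proposition \ref{finiteequi} bounding the finitely many congruence classes of the rigid pieces — to place the far side so that no equivalent realization of $G$ can simultaneously preserve all far-side lengths and alter the distance between $u$ and $v$. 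A subcase is cheap: when $G_1$ has no \emph{proper} $(u,v)$-rigid induced subgraph, Lemma \ref{glob laza elegs min merev} shows $\{u,v\}$ is already globally loose in $G_1$, so the implication holds vacuously. I expect the feasibility construction for the remaining residual case to be the crux of the whole proof.
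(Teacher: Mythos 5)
Your setup is sound: the rigidity of $G[V_0]+ab$ and of $G[D\cup\{a,b\}]+ab$, the clique-sum handling of the ``only if'' direction, and the disposal of the case where $G[V_0]$ contains a $(u,v)$-rigid subgraph $R$ (though there a direct application of Lemma \ref{lem gllaza1} to the contraction sequence taking $G$ to $G[V_0]+ab$ is cleaner and avoids the issue that when $a\notin V(R)$ no component of $G-V(R)$ is contained in $D$). You have also correctly isolated the hard case --- no $(u,v)$-rigid subgraph of $G[V_0]$, so that $ab$ is an ${\cal R}_2$-bridge in $G[V_0]+ab$ --- and the cheap subcase within it via Lemma \ref{glob laza elegs min merev}. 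But at the crux you stop: for the remaining situation, where $G[V_0]+ab$ has a proper $(u,v)$-rigid induced subgraph necessarily containing the edge $ab$, you only announce that a ``feasibility construction'' mirroring Lemma \ref{lem gllaza2} should work. That is a genuine gap, and the proposed route is doubtful: Lemma \ref{not feasible point} lets you place a \emph{single} new vertex so as to avoid finitely many closed feasible sets, whereas here the object to be attached is the whole rigid far side, and the only constraint it imposes on the $V_0$-part of an equivalent realization of $G$ is that $\|q(a)-q(b)\|$ lie in a finite set of values containing $\|p(a)-p(b)\|$. For the alternative values in that set you have no control over $\|q(u)-q(v)\|$, and no generic choice of the placement of $D$ removes them.

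The paper's treatment of this case is quite different and is the real content of the lemma: it is an induction on $|V|$. One takes a \emph{vertex-minimal} proper $(u,v)$-rigid induced subgraph $G'=(V',E')$ of $G[V_0]+ab$ (which exists by Lemma \ref{glob laza elegs min merev} and must contain $a$, $b$ and the edge $ab$), and analyses the components of $G[V_0]-V'$: a component with at least two vertices is contracted and handled by Lemma \ref{lem gllaza2} together with the induction hypothesis applied to $G/Z$; a singleton of degree $2$ is suppressed via Lemma \ref{glob laza masodfoku csucs} and induction; and if every component is a singleton of degree at least $3$, one shows that each pair of neighbours of such a singleton is linked in $G'-ab$, so the flexing from the proof of Lemma \ref{glob laza elegs min merev} extends to an equivalent realization of $G[V_0]+ab$ that changes the distance between $u$ and $v$, contradicting the hypothesis that $\{u,v\}$ is weakly globally linked there. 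None of this inductive structure appears in your proposal, so the lemma is not proved.
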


\begin{proof}
If $\{u,v\}$ is weakly globally linked in $G$, then it is easy to see, by using
Lemma \ref{cliquesum}, that $\{u,v\}$ is weakly globally linked in $G[V_0]+ab$.

To prove that if $\{u,v\}$ is weakly globally linked in $G[V_0]+ab$, then $\{u,v\}$ is weakly globally linked in $G$,
we use induction on $|V|$. If $|V|=4$, then we must have $G=K_4-e$ and $uv\in E$, so the statement is obvious.
Suppose that $|V|\geq 5$.
If there exists a $(u,v)$-rigid subgraph of $G[V_0]$, then, since
$G[V_0]+ab$ can be obtained from $G$ by a sequence of edge contractions, we can use
Lemma \ref{lem gllaza1} to deduce that $\{u,v\}$ is weakly globally linked in $G$.
So in the rest of the proof we may assume that
\begin{equation}
\label{no}
\hbox{$G[V_0]$ has no $(u,v)$-rigid subgraph.}
\end{equation}
In particular, $G[V_0]$ is not rigid. Hence, by the rigidity of $G$,
it follows that $\{a,b\}$ is not linked in $G[V_0]$ and
$ab$ is an ${\cal R}_2$-bridge in $G[V_0]+ab$.

Since $\{u,v\}$ is weakly globally linked in $G[V_0]+ab$, Lemma \ref{glob laza elegs min merev} implies that
there exists a $(u,v)$-rigid proper induced subgraph
$G'=(V',E')$ of $G[V_0]+ab$. 
Suppose that $G'$ is vertex-minimal.
By (\ref{no}) we obtain $ab\in E'$ and $a,b\subset V'$.
We consider three cases depending on the structure of
$G[V_0]-V'$.
Since $G'$ is a proper induced subgraph, we have $V_0-V'\not= \emptyset$. See Figure \ref{fig:gen9}.
\begin{figure}[ht]
\centering
\includegraphics[scale=0.9]{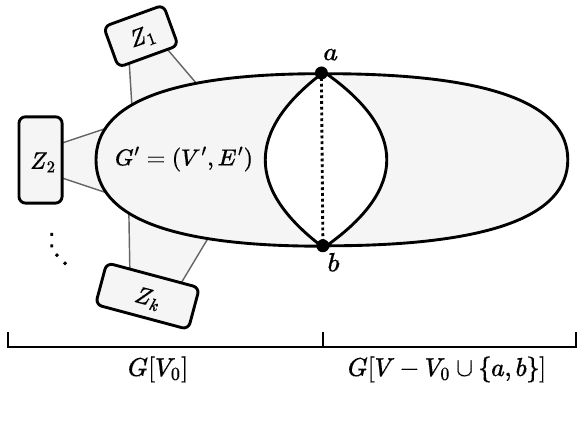}
\vspace{-0.6cm}
\caption{The case where $G[V_0]$ has no $(u,v)$-rigid subgraph.}
\label{fig:gen9}
\end{figure}
\medskip 

\noindent \textbf{Case 1:} $G[V_0]-V'$ has a component $Z$ with $|V(Z)|\geq 2$. 
\medskip

By Lemma \ref{lem gllaza2} $\{u,v\}$ is weakly globally linked in $(G[V_0]+ab)/Z$. 
Since $G$ and $G'$ are rigid, $G-V(Z)$ is also rigid, and $Z$ has at least two neighbours in $G$. 
Hence
$G/Z$ is rigid.
Thus we obtain, by induction, that
$\{u,v\}$ is weakly globally linked in $G/Z$. By using that $G-V(Z)$
is rigid, Lemma \ref{lem gllaza2}
gives that $\{u,v\}$ is weakly globally linked in $G$.
\medskip

\noindent \textbf{Case 2:} Each component of $G[V_0]-V'$ is a singleton and there exists a vertex $z\in V_0-V'$ with $d_G(z)=2$.
\medskip

Let $N_G(z)=\{x,y\}$. By Lemma \ref{glob laza masodfoku csucs} $\{u,v\}$ is weakly globally linked in $(G[V_0]+ab)-z+xy$. 
If $\{u,v\}=\{a,b\}$ and $|V_0|=3$, then $\{u,v\}$ is weakly globally linked in $G$
by Lemma \ref{lem gllaza1}.
So we may assume that $|V_0|\geq 4$, and hence
$(a,b)$ is a $2$-separator of the rigid graph $G-z+xy$. 
Hence
$\{u,v\}$ is weakly globally linked
in $G-z+xy$ by induction. 
By using that $G$ is rigid,
Lemma \ref{glob laza masodfoku csucs} implies that $\{u,v\}$ is weakly globally linked
in $G$.
\medskip
 
\noindent \textbf{Case 3:} Each component of $G[V_0]-V'$ is a singleton and for each $z\in V_0-V'$ we have $d_G(z)\geq 3$. 
\medskip

We claim that for each $z\in V_0-V'$ and $x,y\in N_G(z)$ 
there is a rigid subgraph of $G'-ab$ which contains $x$ and $y$.
To see this let $w$ be another neighbour of $z$, different from $x,y$,
and let $G''$ be obtained from $G'-ab$ by adding vertex $z$ and edges $zx,zy,zw$.
The three edges incident with $z$ in $G''$ cannot be ${\cal R}_2$-bridges, since it
would imply, by using the rigidity of $G'$ and computing ranks, 
that $G''$ is $(u,v)$-rigid, contradicting (\ref{no}).
Thus there is an ${\cal R}_2$-circuit $C$ in $G''$ containing $z$. Then $C$ must contain
$x$ and $y$, too, and $C-z$ is a rigid subgraph of $G'-ab$ which contains $x$ and $y$, as claimed.

The minimality of $G'$ implies that it has no $(u,v)$-rigid proper induced subgraph.
Let $(G[V_0]+ab,p)$ be a generic realization.
By (the proof of) Lemma \ref{glob laza elegs min merev} $(G'-ab,p|_{V'})$ has an equivalent realization $(G'-ab,q)$, for which $||p(u)-p(v)||\neq ||q(u)-q(v)||$,  $||p(a)-p(b)||= ||q(a)-q(b)||$, and such
that the distances between the linked pairs of $G'-ab$ are the same in the two realizations.
Then, since 
each pair of neighbours of every $z\in V_0-V'$ is
linked in
$G'-ab$, it follows that $(G'-ab,q)$
can be  extended to a realization $(G[V_0]+ab,q')$ that is equivalent to $(G[V_0]+ab,p)$.
Hence $\{u,v\}$ is
globally loose in $G[V_0]+ab$, a contradiction.  This completes the proof.
\end{proof}

We next extend Lemma \ref{nem 2of visszavezetes} to 2-connected graphs.

\begin{lemma}\label{linkedpairs and 2separ}
Let $G=(V,E)$ be a 2-connected graph and $\{u,v\}$ be a linked pair of vertices of $G$. Suppose that $(a,b)$ is a 2-separator of $G$. Let $C$ be a connected component of $G-\{a,b\}$, and let $V_0=V(C)\cup \{a,b\}$. Suppose that $u,v\in V_0$. Then $\{u,v\}$ is weakly globally linked in $G$ if and only if $\{u,v\}$ is weakly globally linked in $G[V_0]+ab$.
\end{lemma}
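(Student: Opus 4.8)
The plan is to prove the two implications separately and to reduce the hard direction to the rigid case already settled in Lemma \ref{nem 2of visszavezetes}. First I would clear away the degenerate situations. If $uv\in E(G)$ the pair is globally linked in every realization and both sides of the equivalence hold trivially. If $\{u,v\}=\{a,b\}$, then since $\{a,b\}$ is linked in $G$, one of the two sides of the separator carries an $(a,b)$-rigid subgraph while the other side supplies a path from $a$ to $b$ internally disjoint from it; hence Corollary \ref{coro} already gives that $\{a,b\}$ is weakly globally linked in $G$, and it is weakly globally linked in $G[V_0]+ab$ automatically. So I may assume $uv\notin E(G)$ and $\{u,v\}\neq\{a,b\}$, and write $G_1=G[V_0]+ab$.

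The ``only if'' direction is short. Because $G$ has no edge between $V(C)$ and $V\setminus V_0$, the graph $G+ab$ is the $2$-clique sum of $G_1$ and $G[V\setminus V(C)]+ab$ along the clique $\{a,b\}$; since $u,v\in V_0$, Lemma \ref{cliquesum} shows that $\{u,v\}$ is weakly globally linked in $G+ab$ if and only if it is in $G_1$. As weak global linkedness in $G$ immediately implies the same in $G+ab$ (adding an edge can only shrink the family of equivalent frameworks), the implication follows.

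For the ``if'' direction I would exploit that $\{u,v\}$ is linked. Weak global linkedness in $G_1$ forces, via Lemma \ref{notlinked}, a $(u,v)$-rigid subgraph $G_1[Z]$ with $Z\subseteq V_0$, and I split according to whether $ab\in E(G_1[Z])$. If not, then $G[Z]$ is a $(u,v)$-rigid subgraph of $G$ lying inside $V_0$; choosing a path $P$ from $a$ to $b$ through $V\setminus V_0$ (which exists since $G$ is $2$-connected) and putting $G'=G[V_0]\cup P\subseteq G$, I contract the interior of $P$ to a single vertex $w$ and then the edge $wa$, using that $G[Z]$ avoids all these edges, so that Lemma \ref{lem gllaza1} carries weak global linkedness from $(G[V_0]+w)/wa=G_1$ up to $G[V_0]+w$, then to $G'$, and finally to $G$.

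The remaining case, where every $(u,v)$-rigid subgraph of $G_1$ inside $V_0$ must use $ab$ (equivalently, $G[V_0]$ has no $(u,v)$-rigid subgraph), is the crux. Here I would first show that $\{u,v\}$ being linked in $G$ forces $\{a,b\}$ to be linked in $H:=G[V\setminus V(C)]$: any $(u,v)$-rigid subgraph of $G$ must straddle $\{a,b\}$, and a union of two pieces sharing only $\{a,b\}$ is rigid only if one piece links $\{a,b\}$, which cannot be the $V_0$-side piece without contradicting the case assumption. This furnishes an $(a,b)$-rigid subgraph $H[T]\subseteq H$. Setting $G^*=G[V_0]\cup H[T]\subseteq G$, the subgraph $G[Z]\cup H[T]$ is $(u,v)$-rigid (glue the rigid $G[Z]+ab$ and $H[T]$ along $ab$, which $H[T]$ renders redundant), so by Lemma \ref{lem gllaza2} I may contract all components of $G^*$ lying outside it; the $2$-connectivity of $G$ guarantees each contracted piece keeps degree at least $2$, so the result $K^*$ is rigid, $(a,b)$ is still a $2$-separator of $K^*$ with $u,v$ on the $C$-side, and the weak global linkedness of $\{u,v\}$ is unchanged. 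Applying Lemma \ref{nem 2of visszavezetes} to $K^*$ then replaces the $H[T]$-side by the single edge $ab$, producing exactly ${\rm Con}(G[V_0],Z)+ab$, which is precisely the graph obtained from $G_1$ by contracting $G_1-Z$ (again Lemma \ref{lem gllaza2}). Chaining these equivalences yields that $\{u,v\}$ is weakly globally linked in $G^*$ if and only if in $G_1$, and $G^*\subseteq G$ finishes the proof. I expect this last case to be the main obstacle: both establishing the linkedness of $\{a,b\}$ in $H$ by the straddling argument, and arranging the contractions so that the separator $(a,b)$, the component $C$, and the rigid core all survive and the $C$-side of $K^*$ matches the contraction of $G_1$ on the nose, so that Lemma \ref{nem 2of visszavezetes} applies verbatim.
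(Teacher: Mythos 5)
Your proof is correct and follows the same basic strategy as the paper's: handle the ``only if'' direction via Lemma \ref{cliquesum}, and reduce the ``if'' direction to the rigid case (Lemma \ref{nem 2of visszavezetes}) by contracting the components lying outside a $(u,v)$-rigid core, using Lemmas \ref{lem gllaza1} and \ref{lem gllaza2}. The one substantive difference is your choice of anchor. The paper takes a $(u,v)$-rigid \emph{induced subgraph $G[U]$ of $G$ itself}, which exists directly from the hypothesis that $\{u,v\}$ is linked in $G$; the case split is then simply whether $\{a,b\}\subseteq U$, and in the hard case contracting the components of $G-U$ immediately yields a rigid graph to which Lemma \ref{nem 2of visszavezetes} applies. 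You instead anchor on a $(u,v)$-rigid subgraph $G_1[Z]$ of $G_1=G[V_0]+ab$, which forces you, in the case $ab\in E(G_1[Z])$, to manufacture a rigid subgraph of $G$ straddling the separator. Your route through this is sound, but it relies on two matroid facts you state only in passing and which should be proved: (i) if a rigid graph is the union of two pieces meeting exactly in $\{a,b\}$, then each piece plus the edge $ab$ is rigid and at least one piece has $\{a,b\}$ linked (a rank/submodularity count), and (ii) the piece in which $\{a,b\}$ is linked is then itself rigid --- this second point is what actually contradicts the assumption that $G[V_0]$ has no $(u,v)$-rigid subgraph, since linkedness of $\{a,b\}$ alone would not. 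With those two facts supplied, your argument goes through; the paper's choice of anchor simply makes this entire straddling analysis unnecessary.
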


\begin{proof}
 If $\{u,v\}$ is weakly globally linked in $G$, then it follows from Lemma \ref{cliquesum} that $\{u,v\}$ is weakly globally linked in $G[V_0]+ab$. To prove the “if" direction suppose that $\{u,v\}$ is weakly globally linked in $G[V_0]+ab$. Since $\{u,v\}$ is a linked pair, there is a
$(u,v)$-rigid induced subgraph $G[U]$ of $G$. If $\{a,b\}\not\subseteq U$, then $U$ is a subset of $V_0$
and $G[V_0]+ab$ can be obtained from $G$ by contracting edges which are not induced by $U$. Thus
$\{u,v\}$ is weakly globally linked in $G$ by Lemma \ref{lem gllaza1}. So we may suppose that $\{a,b\}\subseteq U$. Let $A_1,\dots, A_k$ be the components of $G-U$ contained in $V_0$, and let $B_1, \dots, B_l$ be the components of $G-U$ not contained in $V_0$.
Observe that the rigidity of $G[U]$ and the 2-connectivity of $G$ imply that
$G/A_1/\dots /A_k/B_1/\dots/ B_k$ is rigid. Hence we have that
\vspace{-0 cm}
\begin{align*}
&\{u,v\} \text{ is weakly globally linked in } G \\ 
\;\;\Leftrightarrow\;\; &\{u,v\} \text{ is weakly globally linked in } G/A_1/\dots /A_k/B_1/\dots/ B_k \\
\;\;\Leftrightarrow\;\;&\{u,v\} \text{ is weakly globally linked in } G[V_0]/A_1/\dots/A_k + ab\\ 
\;\;\Leftrightarrow\;\; &\{u,v\} \text{ is weakly globally linked in } G[V_0]+ab,
\end{align*} 
where the first and third equivalence follows from Lemma \ref{lem gllaza2} and the second equivalence follows from Lemma \ref{nem 2of visszavezetes}, using the rigidity of $G/A_1/\dots /A_k/B_1/\dots/ B_k$.
\end{proof}

The next lemma on the weak global linkedness of linked separating pairs
follows from
Lemma \ref{linkedpairs and 2separ} by putting $\{a,b\}=\{u,v\}$.
It can also be deduced from Lemma \ref{lem gllaza1} by using that 
there is some component $C$ of $G-\{u,v\}$ for which $\{u,v\}$ is linked in $G[V(C)\cup \{u,v\}]$.

\begin{lemma}
\label{pair}
Let $G=(V,E)$ be a 2-connected graph, and $u,v\in V$ be a linked pair of vertices for which $(u,v)$ is a 2-separator in $G$. Then $\{u,v\}$ is weakly globally linked in $G$.
\end{lemma}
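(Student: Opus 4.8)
The plan is to derive this statement directly from Lemma \ref{linkedpairs and 2separ} by taking the $2$-separator in that lemma to be the pair $\{u,v\}$ itself. The key observation is that once we cut $G$ along $\{u,v\}$ and restrict to one side together with the newly added edge $uv$, the pair $\{u,v\}$ becomes adjacent, and an adjacent pair is trivially weakly globally linked because its distance is fixed by the edge length in every realization.

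In detail, since $(u,v)$ is a $2$-separator, $G-\{u,v\}$ has at least one component; let $C$ be any such component and set $V_0=V(C)\cup\{u,v\}$. By hypothesis $\{u,v\}$ is a linked pair, $G$ is $2$-connected, and $(u,v)$ is a $2$-separator, so all the hypotheses of Lemma \ref{linkedpairs and 2separ} hold with $(a,b)=(u,v)$ and $u,v\in V_0$. That lemma then yields that $\{u,v\}$ is weakly globally linked in $G$ if and only if it is weakly globally linked in $G[V_0]+uv$. But in $G[V_0]+uv$ the vertices $u$ and $v$ are joined by an edge, so $\{u,v\}$ is globally linked in every realization $(G[V_0]+uv,p)$ and is in particular weakly globally linked in $G[V_0]+uv$. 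Therefore $\{u,v\}$ is weakly globally linked in $G$, as claimed.

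Alternatively, one can argue directly via Corollary \ref{coro}, which is the route suggested by Lemma \ref{lem gllaza1}. Since $\{u,v\}$ is linked in $G$, there is a $(u,v)$-rigid induced subgraph; using the standard rank behaviour of the $2$-dimensional rigidity matroid under gluing two rigid pieces along two shared vertices, one shows that $\{u,v\}$ must already be linked within $G[V(C)\cup\{u,v\}]$ for some component $C$ of $G-\{u,v\}$. Thus a $(u,v)$-rigid subgraph $G[V_0]$ with $V_0\subseteq V(C)\cup\{u,v\}$ exists. Because $G$ is $2$-connected and $\{u,v\}$ separates $G$, there is a second component $C'$ in which both $u$ and $v$ have neighbours, hence a $uv$-path through $C'$ whose internal vertices avoid $V_0$. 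Corollary \ref{coro} then gives that $\{u,v\}$ is weakly globally linked in $G$.

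Neither route presents a serious obstacle, since Lemma \ref{linkedpairs and 2separ} (respectively Corollary \ref{coro}) already absorbs the substantive work; the first argument is essentially a specialization and is the cleaner of the two. The only point requiring genuine care arises in the alternative route, namely the claim that a $(u,v)$-rigid subgraph spread across several components of $G-\{u,v\}$ forces linkedness within a single component. This is where the $2$-sum rank argument for the rigidity matroid is needed, and it is the step I would write out most carefully.
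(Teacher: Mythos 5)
Your proposal is correct and matches the paper's own argument: the paper derives Lemma \ref{pair} exactly by setting $\{a,b\}=\{u,v\}$ in Lemma \ref{linkedpairs and 2separ}, and it also mentions your alternative route via Lemma \ref{lem gllaza1} (i.e.\ Corollary \ref{coro}) using that linkedness localizes to $G[V(C)\cup\{u,v\}]$ for some component $C$. Both of your derivations are sound, including the observation that the adjacent pair in $G[V_0]+uv$ is trivially weakly globally linked.
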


We use the following operation to eliminate 2-separators. Let $G=(V,E)$ be a 2-connected graph, let 
$(a,b)$ be a 2-separator in $G$, and let 
$C$ be a connected component of $G-\{a,b\}$.
We say that the graph $G[V(C)\cup \{a,b\}]+ab$ (when $ab\notin E$) or  $G[V(C)\cup \{a,b\}]$ (when $ab\in E$)
is obtained from $G$ by a
{\it cleaving operation} along $(a,b)$. The graph $\bar G$ obtained from $G$ by adding every edge $ab$, for which $ab\notin E$ and $(a,b)$ is a 2-separator of $G$,
is called the {\it augmented graph} of $G$.

The following lemma is easy to show by induction, using the cleaving operation.

\begin{lemma}
\label{lem:cl}
Let $G=(V,E)$ be a 2-connected graph and let $\{u,v\}$ be a non-adjacent vertex pair in $G$
with $\kappa_G(u,v)\geq 3$. Then either $(u,v)$ is a separating pair in $G$ or there is a unique
maximal 3-connected subgraph $B$ of $\bar G$ with $\{u,v\}\subset V(B)$.
In the latter case
the subgraph $B$ can be obtained from $G$ by a sequence of cleaving operations.
Furthermore, $uv\notin E(B)$, and
if the pair $\{u,v\}$ is linked in $G$ then it is also linked in $B$.
\end{lemma}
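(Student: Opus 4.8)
The plan is to dispose of the dichotomy first and then, in the nontrivial branch, to build $B$ by repeatedly cleaving along $2$-separators, keeping at each step the piece that contains $u$ and $v$. Since $uv\notin E$ and $\kappa_G(u,v)\geq 3$, Menger's theorem tells us that no $2$-separator of $G$ separates $u$ from $v$. If $(u,v)$ is itself a $2$-separator we are in the first alternative and there is nothing to prove, so suppose it is not. I would run the following process starting from $H:=G$: while $H$ has a $2$-separator $(a,b)\neq(u,v)$, note that $\{a,b\}$ contains at most one of $u,v$, let $C$ be the (unique) component of $H-\{a,b\}$ containing the vertex of $\{u,v\}$ that avoids $\{a,b\}$, and cleave along $(a,b)$ keeping $C$, i.e. set $H:=H[V(C)\cup\{a,b\}]+ab$. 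Each step strictly decreases $|V(H)|$, so the process terminates. Before anything else I would check the invariants carried through a cleaving step: $H$ stays $2$-connected; $uv\notin E(H)$; $\kappa_H(u,v)\geq 3$ (any $u$–$v$ path through the discarded side must use both $a$ and $b$, so at most one of three internally disjoint paths gets rerouted through the new edge $ab$); and $(u,v)$ never becomes a $2$-separator (if it separated the cleaved graph, then reattaching the discarded part—which hangs on $\{a,b\}$—would exhibit $(u,v)$ as a $2$-separator of $G$). Hence the final graph $B$ has no $2$-separator at all, so it is $3$-connected, contains $u,v$, and satisfies $uv\notin E(B)$; moreover it is a subgraph of $\bar G$, since every edge created during cleaving joins a $2$-separator.

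For the clause \emph{if $\{u,v\}$ is linked in $G$ then it is linked in $B$}, I would show that a single cleaving step preserves linkedness. Let $H[U]$ be a $(u,v)$-rigid subgraph of $H$; since $uv\notin E$ we have $|U|\geq 3$, so $H[U]$ is $2$-connected (a rigid graph on at least three vertices has no cut vertex, else a flex would exist). If $U$ avoids the discarded side $D$, then $H[U]$ already lies in the cleaved graph. If $U$ meets $D$, then, having no cut vertex, $U$ must contain both $a$ and $b$; writing $H[U]$ as the union of its parts on the two sides of $\{a,b\}$, the kept part together with $ab$ is rigid, because a flex of that part fixing $a$ and $b$ would extend (by the identity on the other part) to a nontrivial flex of the rigid graph $H[U]$. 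In either case we obtain a $(u,v)$-rigid subgraph of the cleaved graph, so $\{u,v\}$ remains linked, and the claim follows by induction.

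For uniqueness and maximality I would use two clean facts about $3$-connected subgraphs of $\bar G$ that contain $u,v$. First, the union of two $3$-connected subgraphs sharing at least three vertices is again $3$-connected: after deleting any two vertices, each part stays connected and still meets the at-least-one surviving common vertex. Second, two such subgraphs $B_1,B_2$ cannot meet in exactly $\{u,v\}$ in the present case: if some path of $\bar G$ joined $V(B_1)-\{u,v\}$ to $V(B_2)-\{u,v\}$ avoiding $u,v$, then $B_1\cup B_2$ together with that path would be a strictly larger $3$-connected subgraph (contradicting maximality of one of them); and if no such path existed, then $(u,v)$ would separate $\bar G$, hence separate $G$ (deleting the added edges cannot reconnect a disconnected graph), contradicting our assumption. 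Thus any two maximal $3$-connected subgraphs containing $u,v$ meet in at least three vertices, so their union is $3$-connected and they coincide; this yields a unique maximal one, $B^\ast$. Since our cleaving output $B$ is $3$-connected and contains $u,v$, the same facts give $V(B)\subseteq V(B^\ast)$; and because a $2$-separator can never split the vertex set of a $3$-connected subgraph, at every cleaving step one may keep the side containing $V(B^\ast)$, so $B^\ast$ is never cut and $B=B^\ast$.

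The main obstacle is precisely this last identification of the cleaving output with $B^\ast$, together with the assertion that $B\subseteq\bar G$: both require controlling how the successively created virtual edges interact with later separations, i.e. verifying that \emph{every} $2$-separator met in the process is a $2$-separator of $G$ and that no vertex of $B^\ast$ is ever discarded, even though intermediate graphs carry virtual edges not present in $G$. This bookkeeping is exactly the content of the uniqueness of the decomposition of a $2$-connected graph into its $3$-connected components (Tutte), which I would invoke—or reprove from the same observation that a $2$-separator cannot split a $3$-connected subgraph and that virtual edges correspond to genuine $2$-separators of $G$—to complete the argument. The preservation of $2$-connectivity, of $\kappa(u,v)\geq 3$, and of non-adjacency under cleaving, by contrast, are routine and can be checked directly from the definition of the cleaving operation and of the augmented graph $\bar G$.
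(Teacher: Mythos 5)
The paper gives no proof of this lemma beyond the sentence that it ``is easy to show by induction, using the cleaving operation,'' and your proposal is exactly that induction spelled out, so in approach you are aligned with the authors. The parts you work out in detail are sound: the invariants under a single cleaving step ($2$-connectivity, $uv\notin E(H)$, $\kappa_H(u,v)\geq 3$ via rerouting at most one of three internally disjoint paths through the virtual edge, and $(u,v)$ not becoming a separator because the discarded part can be reattached), and the preservation of linkedness via the standard gluing fact that if a rigid graph splits over $\{a,b\}$ then each side plus the edge $ab$ is rigid. These are correct and are the substance the paper leaves to the reader.

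There is, however, one step that fails as written, in your uniqueness argument: if $B_1$ and $B_2$ meet exactly in $\{u,v\}$ and $P$ is a path of $\bar G-\{u,v\}$ joining $V(B_1)-\{u,v\}$ to $V(B_2)-\{u,v\}$, then $B_1\cup B_2\cup P$ is \emph{not} $3$-connected whenever $P$ has internal vertices, since those vertices have degree $2$; so you do not get a contradiction with maximality this way. (Suppressing the internal vertices of $P$ to a single edge $xy$ does give a $3$-connected graph, but $xy$ need not be an edge of $\bar G$, so maximality of $B_1$ as a subgraph of $\bar G$ is not violated either.) As you yourself note, the clean way to get uniqueness of the maximal $3$-connected subgraph containing $\{u,v\}$, together with the fact that every virtual edge created during the process joins a $2$-separator of $G$ itself, is to appeal to the Tutte/Hopcroft--Tarjan decomposition of a $2$-connected graph into its $3$-connected components (the paper explicitly identifies $B$ with a cleavage unit in Section 6.1). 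If you invoke that theorem, the flawed union-plus-path argument can simply be deleted; if you want a self-contained proof, this case needs a genuine repair rather than the argument you gave.
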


The subgraph $B$ in Lemma \ref{lem:cl} is called the {\it 3-block} of $\{u,v\}$ in $G$.

We are ready to state the 
main result of this section: a complete characterization of the non-adjacent weakly globally linked pairs in a graph $G$.
By Lemma \ref{notlinked} and Lemma \ref{kappa}
we may assume that $\{u,v\}$ is linked and $\kappa_G(u,v)\geq 3$ (for otherwise $\{u,v\}$ is globally loose).
By Lemma \ref{cliquesum} we may also assume that
$G$ is 2-connected.

\begin{theorem}\label{cleavunit}
Let $G=(V,E)$ be a 2-connected graph and let $\{u,v\}$ be a non-adjacent linked pair of vertices
with $\kappa_G(u,v)\geq 3$.
Then $\{u,v\}$ is weakly globally linked in $G$ if and only if either\\
(i) $(u,v)$ is a separating pair in $G$, or\\
(ii) ${\rm Clique}(B,V_0)$ is globally rigid,\\
where $B$ is the 3-block of $\{u,v\}$ in $G$, and $B_0=(V_0,E_0)$ is a subgraph
of $B$ with $u,v\in V_0$ such that $B_0+uv$ is an ${\cal R}_2$-circuit.
\end{theorem}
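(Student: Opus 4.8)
The plan is to organize the argument around the dichotomy provided by Lemma \ref{lem:cl}: either $(u,v)$ is a separating pair of $G$, which is exactly case (i), or the 3-block $B$ of $\{u,v\}$ is well-defined and is obtained from $G$ by a sequence of cleaving operations along 2-separators. In the first case there is nothing to prove, since Lemma \ref{pair} already guarantees that a linked separating pair is weakly globally linked; this settles that whenever (i) holds, $\{u,v\}$ is weakly globally linked. It therefore remains to treat the case when $(u,v)$ is not a separating pair and to show that in this regime weak global linkedness is equivalent to condition (ii).

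First I would reduce from $G$ to its 3-block $B$. Since $B$ arises from $G$ through cleaving operations, and each cleaving step is precisely the operation appearing in Lemma \ref{linkedpairs and 2separ} (cutting along a 2-separator $(a,b)$, retaining the component containing $u$ and $v$, and adding the edge $ab$), I would invoke that lemma once per step to conclude that $\{u,v\}$ is weakly globally linked in $G$ if and only if it is weakly globally linked in $B$. By Lemma \ref{lem:cl}, the graph $B$ is $3$-connected, $uv\notin E(B)$, and $\{u,v\}$ remains linked in $B$; hence $r_2(B+uv)=r_2(B)$, so $B+uv$ contains an ${\cal R}_2$-circuit through $uv$, and deleting $uv$ from it yields a subgraph $B_0=(V_0,E_0)$ with $u,v\in V_0$ and $B_0+uv$ an ${\cal R}_2$-circuit, exactly as the statement requires.

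With the reduction in hand, both directions become consequences of Theorem \ref{globlaza jellemzes2} applied to the $3$-connected graph $B$: that theorem gives that $\{u,v\}$ is weakly globally linked in $B$ if and only if ${\rm Clique}(B,V_0)$ is globally rigid, which is condition (ii). For the ``if'' direction, (ii) yields weak global linkedness in $B$ and hence in $G$; for the ``only if'' direction, weak global linkedness in $G$ transfers to $B$ and then forces (ii). I would also note that, because weak global linkedness of $\{u,v\}$ in $B$ is intrinsic and does not reference any circuit, Theorem \ref{globlaza jellemzes2} shows the truth of (ii) to be independent of the particular choice of $B_0$, so the statement is well-posed.

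The main obstacle I anticipate is justifying the iterated use of Lemma \ref{linkedpairs and 2separ}, which at every cleaving step requires that the current graph be $2$-connected, that $(a,b)$ be a genuine $2$-separator with $u$ and $v$ on the same side, and, most delicately, that $\{u,v\}$ remain a linked pair. That cleaving a $2$-connected graph along a $2$-separator (with the virtual edge added) again produces $2$-connected graphs is standard, and Lemma \ref{lem:cl} is arranged so that $u$ and $v$ stay together and the sequence terminates at $B$. The subtle point is the preservation of linkedness along the sequence, which I would check by tracking a $(u,v)$-rigid subgraph through each cleaving: when it lies inside the retained part it survives unchanged, and when it crosses the separator one uses that contracting the discarded components keeps the relevant subgraph rigid, exactly as in the proof of Lemma \ref{linkedpairs and 2separ} itself. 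Once this bookkeeping is settled, the theorem follows formally from Lemmas \ref{pair}, \ref{linkedpairs and 2separ}, \ref{lem:cl} together with Theorem \ref{globlaza jellemzes2}.
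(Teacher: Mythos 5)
Your proposal is correct and follows essentially the same route as the paper: case (i) is handled by Lemma \ref{pair}, the reduction to the 3-block $B$ is done by repeated application of Lemma \ref{linkedpairs and 2separ} along the cleaving sequence of Lemma \ref{lem:cl}, and the 3-connected case is settled by Theorem \ref{globlaza jellemzes2}. The only cosmetic difference is that the paper phrases the reduction as an induction on the number of vertex pairs of local connectivity two rather than as a direct iteration over the cleaving steps.
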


\begin{proof}
The proof is by induction on the number $h$ of vertex pairs $x,y\in V$ with $\kappa_G(x,y)=2$.
If $h=0$, then $B=G$ and (ii) holds by Theorem \ref{globlaza jellemzes2}.
Suppose that $h\geq 1$ and let $(a,b)$ be a 2-separator in $G$. If $\{a,b\}=\{u,v\}$ then
Lemma \ref{pair} applies and (i) holds.  
Otherwise we can use Lemmas \ref{linkedpairs and 2separ}, \ref{lem:cl}, and induction, to
complete the proof.%
\end{proof}

See Figure \ref{fig:gen10} for an illustration of
Theorem \ref{cleavunit}.

\begin{figure}[ht]
\centering
\includegraphics[scale=1.25]{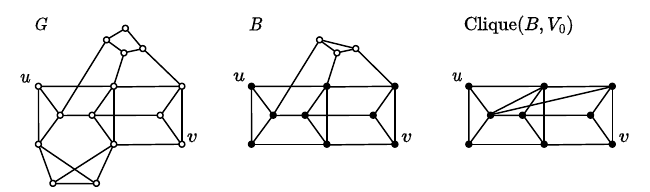}
\caption{Consider the graph $G$. The pair $\{u,v\}$ is linked in $G$, as there is a $(u,v)$-rigid subgraph of $G$, and we have $\kappa_G(u,v)=3$. $B$ is the 3-block of $\{u,v\}$ in $G$. Consider the subgraph $B_0=(V_0,E_0)$ of $B$ that is induced by the solid vertices. $B_0+uv$ is an ${\cal R}_2$-circuit. Since ${\rm Clique}(B,V_0)$ is globally rigid, $\{u,v\}$ is weakly globally linked in $G$ by Theorem \ref{cleavunit}.}
\label{fig:gen10}
\end{figure}

\section{Concluding remarks}\label{section:concluding}

\subsection{Algorithmic aspects}

Theorem \ref{cleavunit} and its proof shows that weak global linkedness of a vertex pair
$\{u,v\}$ in a graph $G=(V,E)$ can be tested in $O(|V|^2)$ time, as efficient
algorithms are available for each of the required subroutines. Basic graph algorithms can
be used to test, in linear time, whether $\kappa_G(u,v)\geq 3$ holds and to find the maximal
2-connected block that contains $u,v$. After reducing the problem to the 2-connected case,
the linear time algorithm of \cite{HT} can be applied to check whether $(u,v)$ is a separating pair and (when it is not)
to identify the 3-block $B$ of $\{u,v\}$. (Note that $B$ coincides with one of the
so-called cleavage units of $G$.) Computing ${\rm Clique}(G,X)$ for a given $X\subseteq V$ is also easy.

Testing whether $\{u,v\}$ is linked, and (when it is linked) finding an ${\cal R}_2$-circuit of $G+uv$ containing $uv$
can be done in $O(|V|^2)$ time \cite{BJ}. Within the same time bound, we can test
whether a graph is globally rigid, see e.g. \cite{BJ,KM}.

\subsection{Higher dimensions}

Most questions concerning 
the higher dimensional versions of (weak) global linkedness are open.
Partial results can be found in \cite{GJpartial,JKT, JT}.
A fairly natural new question is whether 
the sufficient condition of weak
global linkedness given in Theorem \ref{globlaza jellemzesd} is also necessary for $d\geq 3$, in the sense that for every weakly globally linked pair $\{u,v\}$ of a graph $G$ there exists a $(u,v)$-rigid induced subgraph $G[X]$ of $G$ such that ${\rm Clique}(G,X)$ is globally rigid. We are not aware of
any counter-examples.

Finding extensions and stronger versions of our results is another promising research
direction. Here we mention one example. 
If we use
Lemma \ref{linkedext} (suggested by D. Garamv\"olgyi) in place of Proposition \ref{finiteequi} in the proof, we obtain 
the following strengthening of Lemma \ref{lem gllaza1}
to linked pairs: if $G[V_0]$ is a subgraph of $G$ in which $\{u,v\}$ is linked, $e\in E-E(G[V_0])$ and $\{u,v\}$ is weakly globally linked in $G/e$, then $\{u,v\}$ is weakly globally linked in $G$.
Note that
a pair $\{u,v\}$ may be linked in a graph $G_0$ in $\R^d$, $d\geq 3$, even if $G_0$ contains no $(u,v)$-rigid subgraph. 
For the definitions of the new notions appearing in the next proof see e.g. \cite{JW,SW}.

\begin{lemma}
\label{linkedext}
Let $\{u,v\}$ be a linked pair in a graph $G$ in $\R^d$ and let $(G,p)$ be a generic realization of $G$
in $\R^d$. Then the set
$$\{ ||q(u) - q(v)|| : (G,q) \ \hbox{is equivalent to}\ (G,p) \}$$
is finite.
\end{lemma}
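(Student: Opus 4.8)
The plan is to recast the statement algebraically: linkedness of $\{u,v\}$ should mean that the squared length of the pair is an algebraic function of the squared edge lengths, and a univariate algebraic dependence can have only finitely many roots. First I would set up the edge-measurement functions. Writing $V=\{v_1,\dots,v_n\}$, let $x_{i,1},\dots,x_{i,d}$ be indeterminates standing for the coordinates of $v_i$, work in the polynomial ring $A=\R[x_{i,j}]$ with fraction field $K$, and for each edge $e=v_av_b\in E$ put $\ell_e=\sum_{j=1}^d (x_{a,j}-x_{b,j})^2\in A$; define $\ell_{uv}$ analogously for the pair $\{u,v\}$. Let $L=\R(\ell_e:e\in E)\subseteq K$ be the subfield generated by the edge-measurement functions. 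Note that every $\ell_e$ and $\ell_{uv}$ has rational coefficients, a fact I will need at the end.

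The key step is to translate the combinatorial hypothesis into a field-theoretic one. The rigidity matrix $R(G,p)$ is, up to a scalar, the Jacobian of the map $p\mapsto(\ell_e(p))_{e\in E}$, so by the Jacobian criterion for algebraic independence in characteristic zero the generic rank $r_d(G)$ equals $\mathrm{trdeg}_{\Q} \Q(\ell_e:e\in E)$; in other words, ${\cal R}_d(G)$ is exactly the algebraic matroid of the functions $\{\ell_e\}$ (see \cite{JW,SW}). Applying this to both $G$ and $G+uv$, the hypothesis that $\{u,v\}$ is linked, i.e.\ $r_d(G+uv)=r_d(G)$, says precisely that adjoining $\ell_{uv}$ does not raise the transcendence degree, and hence that $\ell_{uv}$ is algebraic over $\Q(\ell_e:e\in E)$.

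From this algebraic dependence I would extract a usable polynomial identity. Since $\ell_{uv}$ is algebraic over $\Q(\ell_e:e\in E)$, there is a nonzero polynomial $\sum_{k=0}^m \phi_k T^k$ with $\phi_k\in\Q(\ell_e:e\in E)$ and $\phi_m\neq 0$ that vanishes at $T=\ell_{uv}$. Clearing a common denominator yields polynomials $N_0,\dots,N_m\in\Q[\ell_e:e\in E]$ with $N_m\neq 0$ such that $\sum_{k=0}^m N_k\,\ell_{uv}^{\,k}=0$ holds identically in $A$. Now I specialize at the given generic realization $(G,p)$. Because $N_m$ is a nonzero polynomial with rational coefficients and the coordinates of $p$ are algebraically independent over $\Q$, we have $N_m(\ell_E(p))\neq 0$, so $P(t):=\sum_k N_k(\ell_E(p))\,t^k$ is a genuine univariate polynomial of degree $m\geq 1$ with at most $m$ roots. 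For every $(G,q)$ equivalent to $(G,p)$ we have $\ell_e(q)=\ell_e(p)$ for all $e\in E$, hence $N_k(\ell_E(q))=N_k(\ell_E(p))$; evaluating the identity at $q$ gives $P(\ell_{uv}(q))=0$. Thus $||q(u)-q(v)||^2=\ell_{uv}(q)$ lies in the finite root set of $P$, and taking square roots proves the lemma.

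The hard part will be the specialization step, that is, guaranteeing the leading coefficient survives the passage from the function-field identity to a numerical univariate polynomial. The naive version, clearing denominators over $\R(\ell_e)$, fails: a polynomial with transcendental coefficients can vanish at a $\Q$-generic point. The remedy, and the reason I insist on working over $\Q$ throughout, is that all the measurement functions are defined over $\Q$, so the dependence of $\ell_{uv}$ can be taken over $\Q(\ell_e:e\in E)$ and the resulting $N_k$ have rational coefficients; a nonzero such polynomial cannot vanish at a point whose coordinates are algebraically independent over $\Q$, which is exactly what genericity provides. The only other point requiring care is the precise statement of the Jacobian/algebraic-matroid identification, namely the classical fact that the generic $d$-dimensional rigidity matroid is the algebraic matroid of the squared edge-length coordinates.
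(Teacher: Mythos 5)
Your proof is correct, but it proceeds along a genuinely different route from the one in the paper. You argue algebraically: the generic $d$-dimensional rigidity matroid is the algebraic matroid of the squared edge-length functions, so linkedness of $\{u,v\}$ makes $\ell_{uv}$ algebraic over $\mathbb{Q}(\ell_e : e\in E)$, and clearing denominators in the minimal polynomial yields a relation $\sum_k N_k\,\ell_{uv}^k=0$ whose leading coefficient is a nonzero polynomial over $\mathbb{Q}$ in the vertex coordinates and hence survives specialization at a generic $p$; every equivalent $(G,q)$ then forces $\|q(u)-q(v)\|^2$ into the root set of a fixed nonzero univariate polynomial. You correctly identify and handle the one delicate point, namely keeping the whole dependence defined over $\mathbb{Q}$ so that genericity can be invoked against the leading coefficient. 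The paper instead argues analytically: assuming infinitely many distinct values, it extracts a convergent subsequence of equivalent frameworks, invokes Connelly's result that a framework equivalent to a generic one has the same equilibrium stresses (hence maximal rigidity-matrix rank at the limit), deduces from linkedness that every infinitesimal motion near the limit preserves the $u$--$v$ distance to first order, and derives a contradiction via the averaging technique applied to midpoints of consecutive frameworks. Your approach buys a cleaner, essentially self-contained argument with a quantitative byproduct (the number of attainable squared distances is bounded by the degree of the algebraic dependence, independent of $p$), at the cost of importing the Jacobian-criterion identification of the rigidity matroid with an algebraic matroid; the paper's approach avoids that identification but leans on Connelly's stress result and on the compactness-and-averaging machinery it already uses elsewhere (e.g.\ in Lemma \ref{lem gllaza1}).
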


\begin{proof} Suppose, for a contradiction, that there exists an infinite sequence of frameworks
$(G,q_i)$, $i\geq 1$, equivalent to $(G,p)$, in which the 
distances $||q_i(u) - q_i(v)||$ are pairwise different.
We may assume that $G$ is connected and $q_i(u)$ is the origin for all $i\geq 1$.
Then each $(G,q_i)$ is in the interior of a ball of radius $K$, for some constant $K$.
Thus, by choosing a subsequent, if necessary, we may assume that $(G,q_i)$ is convergent, with limit $(G,q)$. 
Since $(G,q)$ is equivalent to $(G,p)$, and $(G,p)$ is generic, the two frameworks
$(G,p)$ and $(G,q)$ have the same 
equilibrium stresses by \cite{Con}. 
In particular, the rank of the rigidity matrix of $(G,q)$ is equal to the maximum (generic) rank of $G$.   
This fact, and the linkedness of $\{u,v\}$ imply that the ranks of the rigidity matrices
of $(G+uv,q)$ and $(G,q)$ are the same.
So their kernels are the same, too. Thus every infinitesimal motion $x:V\to \R^{d}$ of $(G,q)$ satisfies
$(q(u)-q(v))^T(x(u)-x(v)) = 0$. By continuity this %
holds for all frameworks in a small enough
neighbourhood of $(G,q)$. 
Consider the frameworks
$q'_i = (q_{i+1} + q_i)/2$. They converge to $q$, and the well-known avaraging technique shows that $x_i = (q_{i+1} - q_i)$ is an infinitesimal motion of $q'_i$ for all $i\geq 1$ (for a proof see e.g. \cite[Theorem 13]{coning}).
The same calculations show that, since $||q_{i+1}(u)-q_{i+1}(v)||\neq ||q_i(u)-q_i(v)||$, we have $(q'_i(u) - q'_i(v))^T(x_i(u)-x_i(v))\not= 0$, a contradiction. %
\end{proof}

\subsection{Minimally globally rigid graphs}

A graph $G=(V,E)$ is called {\it minimally globally rigid} in $\R^d$ if it is globally rigid in $\R^d$ and for every edge $e\in E$ the graph $G-e$ is not globally rigid in $\R^d$.
Garamv\"olgyi and Jord\'an \cite[Theorem 3.3]{GJmgr} proved that 
if $G=(V,E)$ is minimally globally rigid in $\R^d$ and $|V|\geq d+1$, then $$|E|\leq (d+1)|V|-\binom{d+2}{2}.$$ 
Moreover, as it is noted in \cite{GJmgr}, 
for every globally rigid graph $G$ in $\R^d$ on at least $d+1$ vertices, and for 
every minimally rigid spanning subgraph $G_0$ of $G$, there exists a globally rigid spanning 
subgraph of $G$ that contains $G_0$ and has at most $(d+1)|V|-\binom{d+2}{2}$ edges. 

Furthermore, the authors conjecture that a minimally globally rigid
graph in $\R^d$ is in fact ${\cal R}_{d+1}$-independent, see \cite[Conjecture 4.2]{GJmgr}. 
The truth of this
conjecture would imply that a minimally globally rigid graph $G=(V,E)$ in $\R^d$ is not only sparse, but every subgraph of $G$ is sparse:
for each $U\subseteq V$ with $|U|\geq d+1$ we have $|E(U)|\leq (d+1)|U|- \binom{d+2}{2}$.
This conjecture was verified for $d=2$ in \cite{GJmgr}.

Next we prove this upper bound for all $d$, 
in the special case when the subgraph induced by $U$ is rigid.

\begin{theorem}\label{mgr:thmrigid}
Let $G=(V,E)$ be a minimally globally rigid graph in $\R^d$. Suppose that $U\subseteq V$, $|U|\geq d+1$ and $G[U]$ is rigid. Then $|E(U)|\leq (d+1)|U|- \binom{d+2}{2}$.
\end{theorem}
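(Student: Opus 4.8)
The plan is to reduce the statement to the Garamvölgyi--Jordán edge bound for minimally globally rigid graphs \cite{GJmgr}, applied to a \emph{contracted} lower-order graph, and then to convert this global bound into the desired local bound on $|E(U)|$. Throughout, $V_1,\dots,V_r$ denote the vertex sets of the components of $G-U$.

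First I would show that ${\rm Con}(G,U)$ is globally rigid in $\R^d$. Since $G$ is globally rigid on $|V|\ge d+2$ vertices it is $(d+1)$-connected by Theorem~\ref{thm:hend} (the case $|U|=|V|=d+1$ is immediate: then $G[U]=K_{d+1}$ and the bound holds with equality, and if $r=0$ then ${\rm Con}(G,U)=G$ and the claim is exactly \cite{GJmgr}). Every pair $\{x,y\}\subseteq U$ is globally linked, hence weakly globally linked, in $G$. Writing $E'$ for the edges added by the clique operation, the augmented graph $G+E'$ is globally rigid (it is a supergraph of $G$) and is obtained from ${\rm Clique}(G,U)$ by clique-sum operations along the cliques $N_G(V_i)$; so by Lemma~\ref{cliquesum} each such pair is already weakly globally linked in ${\rm Clique}(G,U)$, and Lemma~\ref{J}(c) yields that ${\rm Clique}(G,U)$ is globally rigid. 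As $G[U]$ is rigid, Lemma~\ref{cceq} then gives that ${\rm Con}(G,U)$ is globally rigid.

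Next I would carry out the counting. Since $G[U]$ is rigid we have $r_d(G[U])=d|U|-\binom{d+1}{2}$, so the asserted inequality is equivalent to bounding the number of independent self-stresses of $G[U]$ by $|U|-d-1$; concretely, it suffices to exhibit a minimally globally rigid spanning subgraph $\Gamma$ of ${\rm Con}(G,U)$ with $E(U)\subseteq E(\Gamma)$. Granting this, the bound of \cite{GJmgr} applied to $\Gamma$ gives $|E(\Gamma)|\le (d+1)(|U|+r)-\binom{d+2}{2}$. Each contracted vertex $v_i$ satisfies $\deg_\Gamma(v_i)\ge d+1$, because $N_G(V_i)$ is a $(d+1)$-cut of $G$ and all neighbours of $v_i$ lie in $U$; hence at least $(d+1)r$ edges of $\Gamma$ join a contracted vertex to $U$. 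Subtracting these leaves at most $(d+1)(|U|+r)-\binom{d+2}{2}-(d+1)r=(d+1)|U|-\binom{d+2}{2}$ edges of $\Gamma$ inside $U$, and since $E(U)\subseteq E(\Gamma)$ this bounds $|E(U)|$ as required.

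The main obstacle is the retention step: guaranteeing that reducing ${\rm Con}(G,U)$ to a minimal $\Gamma$ never deletes an edge of $E(U)$. This is exactly where the minimality of $G$ must be used. For an edge $e=xy\in E(U)$ that is \emph{not} an ${\cal R}_d$-bridge of $G[U]$ (so $G[U]-e$ is still rigid), the plan is to combine Lemma~\ref{critical} — which, as $e$ is critical in $G$, shows $\{x,y\}$ is globally loose in $G-e$ — with the second contraction lemma (Lemma~\ref{lem gllaza2}, applied with $V_0=U$ and contracting $V_1,\dots,V_r$ in turn) to deduce that $\{x,y\}$ is globally loose in ${\rm Con}(G,U)-e$; thus $e$ is critical in ${\rm Con}(G,U)$. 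Since a critical edge of a globally rigid graph remains critical in every globally rigid subgraph containing it (if $\Gamma-e$ were globally rigid, so would be its supergraph ${\rm Con}(G,U)-e$), such edges cannot be dropped, so a minimal $\Gamma$ must contain every non-bridge edge of $E(U)$. I expect the genuinely delicate points to be (i) the ${\cal R}_d$-bridge edges of $G[U]$, where $G[U]-e$ is not rigid and Lemma~\ref{lem gllaza2} cannot be applied with $V_0=U$, forcing one to work with a larger rigid set such as $V\setminus V_i$ (whose rigidity must itself be verified) or directly with the self-stresses of $G[U]$ and their zero-extensions into the self-stress space of $G$; and (ii) combining these criticality facts simultaneously over all edges of $E(U)$ so that a single minimal $\Gamma\supseteq E(U)$ exists. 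Overcoming (i) is the crux of the argument.
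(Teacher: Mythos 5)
Your overall skeleton is sound and close to the paper's: both arguments hinge on producing a small globally rigid spanning subgraph of the reduced graph that is forced to contain all of $E(U)$, and your criticality argument for the non-bridge edges (Lemma \ref{critical} applied to $G-e$ inside $G$, transported to the contracted graph via Lemma \ref{lem gllaza2}) is correct and is essentially the same mechanism the paper uses, phrased with ${\rm Con}(G,U)$ and a degree count instead of ${\rm Clique}(G,U)$. The first part (global rigidity of ${\rm Clique}(G,U)$ and hence of ${\rm Con}(G,U)$ via Lemmas \ref{cliquesum}, \ref{J} and \ref{cceq}) and the arithmetic $(d+1)(|U|+r)-\binom{d+2}{2}-(d+1)r=(d+1)|U|-\binom{d+2}{2}$ are fine.

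The gap is exactly the point you flag as the ``crux'' and leave open: the ${\cal R}_d$-bridges of $G[U]$. For such an edge $e=xy$ the subgraph $G[U]-e$ is not rigid, so you have no $(x,y)$-rigid subgraph with which to invoke Lemma \ref{lem gllaza2}, and you cannot conclude that $e$ is critical in ${\rm Con}(G,U)$; a minimally globally rigid $\Gamma$ may therefore drop these edges, and since their number can be as large as $d|U|-\binom{d+1}{2}$, you cannot absorb them into the count afterwards. The paper closes this hole not by proving criticality of the bridges but by invoking the \emph{stronger} form of the Garamv\"olgyi--Jord\'an result quoted before Theorem \ref{mgr:thmrigid}: for any prescribed minimally rigid spanning subgraph $G_0$ there is a globally rigid spanning subgraph $G'$ \emph{containing $G_0$} with at most $(d+1)|U|-\binom{d+2}{2}$ edges. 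Taking $G_0$ to be a minimally rigid spanning subgraph of $G[U]$ (a basis of ${\cal R}_d(G[U])$, which necessarily contains every ${\cal R}_d$-bridge of $G[U]$) retains the bridge edges for free, and only the remaining edges $e\in E(U)-E(G')$ — for which $G[U]-e\supseteq G_0$ is still rigid — need the criticality/weak-global-linkedness contradiction. If you replace your ``take any minimal $\Gamma$'' step by this prescribed-basis version of the GJmgr theorem (applied either to ${\rm Clique}(G,U)$ as in the paper, or to ${\rm Con}(G,U)$ with a basis of $E(U)$ extended to a basis of the contracted graph), your argument goes through; as written, it does not.
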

\begin{proof}
Let $G_0=(U,E_0)$ be a minimally rigid spanning subgraph of $G[U]$. Since $G$ is globally rigid, so is ${\rm Clique}(G,U)$. Thus, by the results of \cite{GJmgr}, there is a globally rigid spanning subgraph $G'=(U,E')$ of ${\rm Clique}(G,U)$ that contains $G_0$ and has at most $(d+1)|U|-\binom{d+2}{2}$ edges. Suppose, for a contradiction, that there is some edge $e=uv\in E(U)-E'$. 
Note that $G[U]-e$ is rigid.
Then $G'$ is a subgraph of ${\rm Clique}(G-e,U)$, and hence $\{u,v\}$ is weakly globally linked in $G-e$ by Theorem \ref{globlaza jellemzesd}.
Since $e$ is critical in $G$, this contradicts Lemma \ref{critical}. It follows that $G[U]$ is a subgraph of $G'$; therefore $|E(U)|\leq |E'|\leq (d+1)|U|-\binom{d+2}{2}$.
\end{proof}

For $d=2$ we can extend Theorem \ref{mgr:thmrigid} to all subsets $U\subseteq V$ with $|U|\geq d+1$.
As we noted above, this two-dimensional result is not new, as it follows from 
\cite[Theorem 4.7]{GJmgr}. Here we give a new proof in order to illustrate how Theorem \ref{mgr:thmrigid} might be applied to attack the $d$-dimensional case.

\begin{theorem}\label{mgr2dim} \cite{GJmgr}
Let $G=(V,E)$ be a minimally globally rigid graph in $\R^2$. Suppose that $U\subseteq V$ and $|U|\geq 3$. Then $|E(U)|\leq 3|U|- 6$.
\end{theorem}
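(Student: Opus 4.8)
The plan is to reduce Theorem~\ref{mgr2dim} to the rigid case already settled in Theorem~\ref{mgr:thmrigid}, using the decomposition of $G[U]$ into its two-dimensional rigid components. First I would dispose of two extreme cases. If $G[U]$ is rigid, then Theorem~\ref{mgr:thmrigid} applies directly. If, at the other extreme, $G[U]$ has no rigid subgraph on at least three vertices, then $G[U]$ contains no ${\cal R}_2$-circuit and is therefore ${\cal R}_2$-independent, so $|E(U)|\le 2|U|-3\le 3|U|-6$. Hence I may assume that $G[U]$ is not rigid but possesses at least one rigid subgraph on $\ge 3$ vertices.

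Next I would take the maximal rigid subgraphs $C_1=G[U_1],\dots,C_m=G[U_m]$ of $G[U]$, i.e.\ its rigid components. Here I would use the standard facts that each $C_i$ is an induced rigid subgraph, that the sets $E(C_i)$ partition $E(U)$, and that distinct components meet in at most one vertex (two rigid subgraphs sharing two vertices have a rigid union, contradicting maximality). Let $m_2$ and $m_3$ be the numbers of components with exactly two and with at least three vertices, and set $N=\sum_i|U_i|$. Applying Theorem~\ref{mgr:thmrigid} to each $C_i$ with $|U_i|\ge 3$ gives $|E(C_i)|\le 3|U_i|-6$, while each remaining component is a single edge; summing yields $|E(U)|=\sum_i|E(C_i)|\le 3N-6m+m_2$. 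Thus the theorem reduces to the purely numerical inequality $3(N-|U|)\le 6(m-1)-m_2$, that is, to bounding the total overlap $N-|U|$ of the components.

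The heart of the proof, and the step I expect to be the main obstacle, is exactly this overlap bound: a component may meet the rest of $G[U]$ in three or more vertices, so a local per-component estimate is not enough. I would resolve it through the following key lemma. Choose a minimally rigid spanning subgraph $M_i$ of each $C_i$, so $|E(M_i)|=2|U_i|-3$ by Theorem~\ref{theorem:gluck}; then $M=\bigcup_i M_i$ is ${\cal R}_2$-independent. Indeed, any ${\cal R}_2$-circuit $K\subseteq M$ is rigid, so $G[V(K)]$ is rigid and $V(K)$ lies inside a single component $U_i$; but inside $U_i$ the edges of $M$ are exactly those of the independent graph $M_i$, which is impossible since $|E(K)|=2|V(K)|-2$. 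Independence of $M$ gives $2N-3m=|E(M)|\le 2|U|-3$, hence $N-|U|\le\tfrac32(m-1)$. Feeding this into the required inequality, $3(N-|U|)\le 6(m-1)-m_2$ follows once $m_2\le\tfrac32(m-1)$, which rearranges to $m_2+3m_3\ge 3$ and holds because $m_3\ge 1$ in the remaining case. This completes the proof.
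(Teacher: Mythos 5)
Your proof is correct and follows essentially the same route as the paper's: both reduce to Theorem \ref{mgr:thmrigid} by decomposing $G[U]$ into its maximal rigid subgraphs, applying the rigid case to each piece, and then bounding $\sum_i (2|V_i|-3)$. The only difference is cosmetic --- the paper cites the known rank identity $\sum_i(2|V_i|-3)=r_2(G[U])\le 2|U|-4$ and uses the uniform estimate $|E_i|\le \frac{3}{2}(2|V_i|-3)$, whereas you re-derive the slightly weaker bound $\sum_i(2|U_i|-3)\le 2|U|-3$ from the ${\cal R}_2$-independence of a union of minimally rigid spanning subgraphs of the components and compensate with the observation that $m_3\ge 1$.
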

\begin{proof}
By Theorem \ref{mgr:thmrigid} the statement is true if $G[U]$ is rigid. Suppose that $G[U]$ is not rigid, that is, $r_2(G[U])\leq 2|U|-4$. 
We may assume that $G[U]$ has no isolated vertices.
It is well-known (see e.g. \cite{Jmemoirs}) that 
for the collection $G_i=(V_i,E_i)$, $1\leq i\leq k$, of the maximal rigid subgraphs of $G[U]$ we have
$\sum_{i=1}^k (2|V_i|-3)=r_2(G[U])$. 
For an integer $h\geq 2$ let
$f(h) = 3h-6$, if $h\geq 3$, and let $f(h)=1$ otherwise.
Then we have
 $$|E(U)|= \sum_{i=1}^k |E_i| \leq \sum_{i=1}^k f(|V_i|)\leq  \sum_{i=1}^k \frac{3}{2}(2|V_i|-3)=\frac{3}{2}r_2(G[U])\leq 3|U|-6,$$ where the first inequality follows from Theorem \ref{mgr:thmrigid}.
\end{proof}

\section{Acknowledgements}

This research has been implemented with the support provided by the Ministry of Innovation and Technology of Hungary from the National Research, Development and Innovation Fund, financed under the  ELTE TKP 2021-NKTA-62 funding scheme. The first author was also 
supported by the Hungarian Scientific Research Fund grant no. K135421, and
the MTA-ELTE Momentum Matroid Optimization Research Group.
 
We thank D\'aniel Garamv\"olgyi for several useful remarks, and for suggesting Lemma \ref{linkedext}. We also thank Csaba Kir\'aly for his comments.


\begin{thebibliography}{99}
\bibitem{AR} {\scshape L. Asimow and B. Roth}, The rigidity of graphs, {\it
  Trans. Amer. Math. Soc.}, 245 (1978), pp. 279-289.

\bibitem{BJ} {\scshape A.R. Berg and T. Jord\'an}, Algorithms for graph rigidity and scene analysis,
Proc. 11th Annual
European Symposium on Algorithms (ESA) 2003, (G. Di Battista and U. Zwick, eds) Springer
LNCS 2832, pp. 78-89, 2003.




\bibitem{Con} {\scshape R. Connelly},
\newblock
Generic global rigidity,
\newblock {\itshape Discrete Comput. Geom.} 33:549-563 (2005).

\bibitem{Conmerge} {\scshape R. Connelly},
Combining globally rigid frameworks,
Proc. of the Steklov Institute of Mathematics,
275, 191-198, 2011.

\bibitem{coning} {\scshape R. Connelly and W. Whiteley},
Global rigidity: the effect of coning,
{\itshape Discrete Comput Geom} (2010) 43: 717–735.





 
\bibitem{GJunitball}
{\scshape D. Garamv\"olgyi and T. Jord\'an}, Global rigidity of unit ball
graphs, {\it SIAM J. Discrete Math.} 34:1, pp. 212-229, 2020.

\bibitem{GJcccg}
{\scshape D. Garamv\"olgyi and T. Jord\'an}, Globally linked pairs in
braced maximal outerplanar graphs, Proc. CCCG 2022, Toronto, August 2022, pp. 162-168.

\bibitem{GJpartial}
{\scshape D. Garamv\"olgyi and T. Jord\'an}, Partial reflections and globally linked
pairs in rigid graphs, arXiv:2305.03412, May 2023.



\bibitem{GJmgr} {\scshape D. Garamv\"olgyi and T. Jord\'an}, Minimally globally
rigid graphs, {\it European J. Combin.}, Vol. 108., 103626, 2023.



\bibitem{Gluck} {\scshape H. Gluck},
\newblock
Almost all simply connected closed surfaces are rigid,
\newblock
{\itshape Geometric topology (Proc. Conf., Park City, Utah, 1974)}, 
pp. 225--239. 
Lecture Notes in Math., Vol. 438, 
Springer, Berlin, 1975.




\bibitem{GHT} {\sc S. Gortler, A. Healy, and D. Thurston},
{Characterizing generic global rigidity},
{\it American Journal of Mathematics}, Volume 132, Number 4, August 2010,
pp. 897-939.





\bibitem{hend} {\scshape B. Hendrickson},
\newblock Conditions for unique graph realizations,
\newblock {\itshape SIAM J. Comput.} {21} (1992), no. 1, 65-84.

\bibitem{HT} {\scshape J.E. Hopcroft and R.E. Tarjan}, Dividing a graph into triconnected
components, {\it SIAM J. Comput.} 2 (1973), 135--158.

\bibitem{JJconnrig} {\scshape B. Jackson and T. Jord\'an,}
\newblock Connected rigidity matroids and unique
realizations of graphs, 
{\it J. Combin. Theory Ser. B}, Vol. 94, 1-29, 2005.

\bibitem{JJS} {\sc B. Jackson, T. Jord\'an, and Z. Szabadka},
Globally linked pairs of vertices in equivalent realizations of
graphs, {\itshape Discrete Comput. Geom.}, Vol. 35,
493-512, 2006.

\bibitem{JJS2} {\sc B. Jackson, T. Jord\'an, and Z. Szabadka},
Globally linked pairs of vertices in rigid frameworks, in:
Rigidity and Symmetry,
{\it Fields Institute Communications}, Vol. 70,
R. Connelly, A. Ivic Weiss, W. Whiteley (Eds.) 2014, pp. 177-203.

\bibitem{Jmemoirs}{\sc T. Jord\'an},
Combinatorial rigidity: graphs and matroids
in the theory of rigid frameworks. In: Discrete Geometric Analysis,
{\it MSJ Memoirs}, vol. 34, pp. 33-112, 2016.


\bibitem{JKT}{\sc T. Jord\'an, Cs. Kir\'aly, and S. Tanigawa}, Generic global rigidity of body-hinge frameworks,
{\it J. Combin. Theory}, Series B 117, 59-76, 2016.

\bibitem{JW} {\sc T. Jord\'an and W. Whiteley,}
Global rigidity,  {\it in} J. E.
Goodman, J. O'Rourke, and C. D. T\'oth (eds.), {\it Handbook of Discrete and Computational
Geometry,} 3rd ed., CRC Press, Boca Raton, pp. 1661-1694, 2018.

\bibitem{JT} {\sc T. Jord\'an and S.Tanigawa}, Global rigidity of triangulations
with braces, {\it J. Comb. Theory} Ser. B., 136, pp. 249-288 (2019).





\bibitem{KM} {\scshape Cs. Kir\'aly and A. Mih\'alyk\'o},
Fast algorithms for sparsity matroids and the global rigidity augmentation problem, Egerv\'ary Research Group, Budapest,
TR-2022-05, 2022.


\bibitem{laman} {\scshape G. Laman},
\newblock On graphs and rigidity of plane
skeletal structures,
\newblock {\itshape J. Engineering Math.} 4 (1970),
331-340.







\bibitem{oxley} {\scshape J.G. Oxley},
\newblock {\itshape Matroid theory},
\newblock Oxford Science Publications.
The Clarendon Press, Oxford University Press, New York, 1992. xii+532 pp.



\bibitem{Saxe} {\scshape J.B. Saxe}, Embeddability of weighted graphs 
in $k$-space is strongly NP-hard, Technical report, Computer Science Department,
Carnegie-Mellon University, Pittsburgh, PA, 1979.


\bibitem{SW} {\sc B. Schulze and W. Whiteley},
Rigidity and scene analysis, {\it in} 
J.E. Goodman, J. O'Rourke, C.D. T\'oth (eds.),
 {\it Handbook of Discrete and Computational
Geometry,} 3rd ed., CRC Press, Boca Raton, 
2018.





\bibitem{Tani} {\scshape S. Tanigawa}, Sufficient conditions for the global rigidity of graphs,
{\it J. Combin. Theory}, Ser. B., Vol. 113, July 2015, Pages 123-140.








\end{thebibliography}
\end{document}